\newtheorem{thm}{Theorem}[section]  
\newtheorem{cor}[thm]{Corollary}
\newtheorem{lem}[thm]{Lemma}
\newtheorem{defi}[thm]{Definition}
\newtheorem{prop}[thm]{Proposition}
\newtheorem{es}[thm]{Example}
\newtheorem{rem}[thm]{Remark}
\DeclareMathOperator{\Homs}{\mathscr{H}\text{\kern -3pt {\calligra\large om}}\,}
\title{Radiality of definable sets}
\author{John Welliaveetil}
\begin{document}

\maketitle

\begin{abstract} 
  In this article we use techniques developed by Hrushovski-Loeser to study certain metric properties of the 
  Berkovich analytification of a finite morphism of smooth connected projective curves. In recent work, M. Temkin proved a radiality statement 
  for the topological ramification locus associated to such finite morphisms. We generalize this result in two 
  directions. We prove a radiality statement for a more general class of sets which we 
  call definable sets. In another direction, we show that the result of Temkin can be obtained in families. 
\end{abstract} 

 {\hypersetup {linkcolor = black} 
\tableofcontents
}
\section{Introduction} \label{introduction}
 
      It is one of the many redeeming features of non-Archimedean geometry that the
      analytification of an algebraic curve is endowed with a structure 
      that can be described in fairly explicit terms. 
      For example, if $k$ is an algebraically closed field complete
      with respect to a non-trivial non-Archimedean real valuation then every smooth
       curve $C$ over $k$ posseses a \emph{skeleton} $\Sigma$\footnote{Refer \cite[\S 3.5.1]{TEM2} for a definition of skeleton.}.
      In particular, $C^{\mathrm{an}} \smallsetminus \Sigma$ decomposes into the disjoint union of isomorphic copies
      of the Berkovich open unit disk and $C^{\mathrm{an}}$ admits a deformation retraction onto $\Sigma$ (cf. \cite[Lemma 3.4]{BPR}).

       Skeleta can also be used to shed light on the structure of a finite morphism $f : C' \to C$ of 
       smooth $k$-curves.  
        For instance, it is a well known fact that there exists skeleta $\Sigma \subset C^{\mathrm{an}}$ and 
       $\Sigma' \subset C'^{\mathrm{an}}$ such that 
       $\Sigma' = (f^{\mathrm{an}})^{-1}(\Sigma)$ and the deformation retractions 
       of $C'^{\mathrm{an}}$ onto $\Sigma'$ and $C^{\mathrm{an}}$ onto $\Sigma$ 
       can be taken to be compatible for the morphism $f^{\mathrm{an}}$ 
        (cf. \cite[\S 3]{WE2}). 
       Since $f^{\mathrm{an}}$ is clopen \footnote{This is a particular instance
       of a more general statement (cf. \cite[Lemma 2.17]{WE1},\cite[Proposition 3.4.7]{berk90}).}, we see that if 
       $O'$ is a connected component of $C'^{\mathrm{an}} \smallsetminus \Sigma'$ 
       then there exists a connected component $O$ of $C^{\mathrm{an}} \smallsetminus \Sigma$ such that 
       the morphism $f^{\mathrm{an}}$ restricts to a surjective map 
       $O' \to O$. If we were to identify $O'$ and $O$ with the Berkovich open unit disk $O(0,1)$ then 
       outside $\Sigma'$, the morphism $f^{\mathrm{an}}$ reduces to finite endomorphisms of
       $O(0,1)$.        
       
            Let $x \in O'$ be a point of type I, II or III.
      The point $x$ must correspond to a closed sub-ball around a rigid point 
      $a$ of radius $r$.  
         We use the notation from \cite[\S 1.2]{BR10} and 
         write $x$ as $\zeta_{a,r}$ where $a \in O'(k)$, $r \in [0,1)$.
       Having identified $O'$ with the Berkovich open unit disk, we define the map 
       \begin{align*} 
       l_x : (0,-\mathrm{log}(r)] \to O' \\
       t \mapsto \zeta_{a,\mathrm{exp}(-t)}
       \end{align*}
        where 
       $\zeta_{a,\mathrm{exp}(-t)}$ is the type II point
       corresponding to the closed disk around $x$ of radius $\mathrm{exp}(-t)$. 
       By convention, we regard any interval 
       $[t',t] \subset (0,\infty]$ as starting from $t$
       with terminal point $t'$.
       We set $|l_x| := -\mathrm{log}(r)$.
       We can extend these constructions to treat points of type IV as well. 

         Note that for every Zariski closed point
         \footnote{These are also referred to as rigid points.}
          $x \in O'(k)$, the map 
        $l_x$ is an isometric embedding with respect to the metric on $C'^{\mathrm{an}}$. 
        It follows that $f^{\mathrm{an}}$ induces a map 
        $$f^{\mathrm{an}}_x := l_{f^{\mathrm{an}}(x)}^{-1} \circ f^{\mathrm{an}}  \circ l_x : (0,\infty] \to (0,\infty].$$    
                We deviate slightly from the notation introduced in \cite{TEM3} and refer to 
        $f^{\mathrm{an}}_x$ as the \emph{profile function of $f$ at $x$}. 
        Note that the profile functions are dependent on the skeleta chosen. 
        The morphism 
        $f^{\mathrm{an}}_x$ is continuous, piecewise 
        linear and its slopes are positive integers.
         It 
        follows that there exists $m_x \in \mathbb{N}$ and
        tuples
        $$T_x := ((t_0,d_0,\alpha_0), \ldots, (t_{m_x},d_{m_x},\alpha_{m_x})) \subset ((0,\infty] \times \mathbb{N} \times \mathrm{val}(k^*))^{m_x}$$ 
         such that 
         $t_0 = \infty$ and for every $0 \leq i \leq m_x$, 
         $f^{\mathrm{an}}_x$ restricted to $(t_{i+1},t_i]$ coincides with
         the map
         $s \mapsto d_is + \alpha_i$.
         By $\mathrm{val}$, we mean $-\mathrm{log}|\cdot|$. 
         The points $t_i$ are the \emph{break points} of 
         $f^{\mathrm{an}}_x$ i.e. 
         for every $i$, $d_i \neq d_{i+1}$. 
               Observe that 
          $f^{\mathrm{an}}_x$ is completely determined by the 
          integer $m_x$ and the tuple $T_x := ((t_0,d_0,\alpha_0), \ldots, (t_{m_x},d_{m_x},\alpha_{m_x}))$. 
       Furthermore, $f^{\mathrm{an}}_x$ is independent of how 
       we identified $O'$ with the Berkovich unit disk 
       since $l_x$ is an isometric embedding. 
       
          Given the curve $C'$ and the skeleton $\Sigma'$, 
          we introduce a function 
          $r_{\Sigma'} : C'^{\mathrm{an}} \to [0,\infty]$ as in 
          \cite[\S 3.2.2]{TEM3} which we require when we define the radiality of a set. 
        If $x \in C'^{\mathrm{an}} \smallsetminus \Sigma'$ then we set 
        $r_{\Sigma'}(x) := |l_x|$. 
        If $x \in \Sigma'(k)$ then $r_{\Sigma'}(x) := \infty$
        and if $x \in \Sigma'$ is not a $k$-point then $r_{\Sigma'}(x) := 0$.
              In \cite{TEM3}, M. Temkin proved the following striking result. 
       He showed that there exists a skeleton $(\Sigma',\Sigma)$ for the  
       morphism $f^{\mathrm{an}}$ \footnote{This means that $\Sigma' \subset C'^{\mathrm{an}}$ and 
       $\Sigma \subset C^{\mathrm{an}}$ are skeleta and the retraction maps 
       $C'^{\mathrm{an}} \to \Sigma'$ and $C^{\mathrm{an}} \to \Sigma$ are compatible 
       with $f^{\mathrm{an}}$.} such that
       for every $x \in C'(k)$, the tuple $T_x$ depends only on its 
       image in $\Sigma'$ for the retraction map $\tau'_{\Sigma'} : C'^{\mathrm{an}} \to \Sigma'$. 
        Equivalently, if $x$ and $y$ belong to $C'(k) \smallsetminus \Sigma'$ are such that  
        $\tau'_{\Sigma'}(x) = \tau'_{\Sigma'}(y)$ 
        then $m_x = m_y$ and $T_x = T_y$.
        As a consequence, one deduces as in Theorem 3.2.10 in loc.cit. that for every $d \in \mathbb{N}$, the set $N_{f^{\mathrm{an}},d}$ of
       points of multiplicity atleast 
        $d$ (cf. \cite[\S 2.1.3]{TEM3} is \emph{radial} around $\Sigma'$. 
        
        \begin{defi} \label{radial set} 
        \emph{Let $C$ be a smooth projective $k$-curve and $\Sigma_0$ 
        be a skeleton of $C^{\mathrm{an}}$.
        We say a subset $X \subset C^{\mathrm{an}}$ is \emph{radial} around the skeleton $\Sigma_0 \subset C^{\mathrm{an}}$
        if there exists a function $p_X : \Sigma_0 \to \mathbb{R}_\infty$ such that   
        $x \in C^{\mathrm{an}}$ belongs to $X$ if and only if $r_{\Sigma_0}(x) \leq p_X(\gamma)$ 
        where $\gamma \in \Sigma_0$ is the unique point that $x$ retracts onto.
        We will say that $X$ is} piecewise affine radial \emph{if in addition, the function 
        $p_X$ is piecewise affine.} 
        \end{defi} 
        
           Observe that since the paths $l_x$ are isometries, a radial set around a skeleton $\Sigma_0$ is a metric neighbourhood 
           of $\Sigma_0$ where the distance to the skeleton is given by a function on $\Sigma_0$ (cf. \cite[3.2.4]{TEM3}).  
           
       \subsection{Radiality of definable sets} 
           
        Motivated by the structure of the multiplicity loci $N_{f^{\mathrm{an}},d}$, one asks 
        if the phenomenon of radiality is displayed by members of a more general class of sets. 
        To this end we introduce the notion of a \emph{definable} subset of $C^{\mathrm{an}}$ and show that 
          such subsets are in fact radial around suitable skeleta. 
        Furthermore, one can check easily that the multiplicity loci are examples of 
        definable subsets of $C'^{\mathrm{an}}$. 
       
        Strictly speaking, one cannot discuss the definability in the model theoretic sense
        of a subset of $C'^{\mathrm{an}}$. However, the theory of non-Archimedean geometry as 
        introduced by Hrushovski-Loeser allows us to overcome this hurdle. 
            In order to  better understand the homotopy types of 
          the Berkovich analytification of quasi-projective varieties, Hrushovski and Loeser 
          introduced a model theoretic analogue of the Berkovich space. 
          To apply model theory to the study of non-Archimedean geometry, 
          we work in the theory ACVF of algebraically closed valued fields. 
           The Hrushovski-Loeser space associated to a 
           $k$-curve $C$ is the space of stably dominated types that 
           concentrate on $C$ and is denoted $\widehat{C}$.    
          A precise definition and detailed description of 
          $\widehat{C}$ can be found in \cite[\S 2]{HL}. 
          The space $\widehat{C}$ is a Hausdorff topological space and can be seen 
          as an enrichment of $C$.  
         
          Let us emphasize two important features that $\widehat{C}$ is 
          endowed with. 
          Firstly, the space $\widehat{C}$ is a $k$-definable set (cf. \cite[\S 7.1]{HL}). 
           We can relate $\widehat{C}$ to the Berkovich space 
           $C^{\mathrm{an}}$ as follows. 
           Let $k^{max}$ be a maximally complete, algebraically closed 
           field extension of $k$
           whose value group is $\mathbb{R}^*$ (when the value group is written multiplicatively). 
           Such a field is unique up to isomorphism over $k$. 
          There exists a canonical homeomorphism 
          $\pi : \widehat{C}(k^{max}) \to C^{\mathrm{an}}_{k^{max}}$.
          This induces a map 
         $\pi_{k,C} : \widehat{C}(k^{max}) \to C^{\mathrm{an}}$ by composing 
         $\pi$ with the projection $C^{\mathrm{an}}_{k^{max}} \to C^{\mathrm{an}}$.
          The map $\pi_{k,C}$ is closed and continuous. 
          
          \begin{defi} \label{definable subset of an analytic curve}
           \emph{A set $X \subset C^{\mathrm{an}}$ is } $k$-definable 
            \emph{if there exists a $k$-definable subset 
            $Y \subset \widehat{C}$ such that 
            $X = \pi_{k,C}(Y)$. We say that $X$ is closed if the set $Y$ is closed and $X$ is path-connected 
            if $Y$ is path connected.}           
          \end{defi} 
           
  \begin{thm} \label{definable subsets are radial}
   Let $C$ be a smooth projective curve over $k$. 
   We then have that every closed path connected $k$-definable set 
   is piecewise affine radial.  
    \end{thm}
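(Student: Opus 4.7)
The plan is to lift the problem to the Hrushovski--Loeser space $\widehat{C}$, apply the relative curve retraction theorem of \cite{HL} to produce a $k$-definable skeleton preserved by the lift of $X$, and then extract radiality from the invariance of the lift under the retraction combined with the closedness and path-connectedness hypotheses; piecewise affineness of the radius function will follow from $o$-minimality in the value group.

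By Definition \ref{definable subset of an analytic curve} we fix a closed path-connected $k$-definable $Y \subset \widehat{C}$ with $\pi_{k,C}(Y) = X$. The main curve-retraction theorem of \cite{HL} then supplies a $k$-definable strong deformation retraction $h : I \times \widehat{C} \to \widehat{C}$ onto a $k$-definable $\Gamma$-internal skeleton $\widehat{\Sigma_0}$ satisfying $h(I \times Y) \subset Y$. Writing $\tau = h(0,\cdot)$ and $\Sigma_0 = \pi_{k,C}(\widehat{\Sigma_0}) \subset C^{\mathrm{an}}$, the set $\Sigma_0$ is a Berkovich skeleton, the associated retraction $\tau_{\Sigma_0}$ moves each $x \in C^{\mathrm{an}}$ back to $\tau_{\Sigma_0}(x)$ along the standard path $l_x$, and the invariance $h(I \times Y) \subset Y$ descends via $\pi_{k,C}$ to the statement that whenever $x \in X$ the entire arc $l_x|_{[0,r_{\Sigma_0}(x)]}$ lies in $X$. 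The candidate radius function is
\[
p_X(\gamma) = \sup\bigl\{r_{\Sigma_0}(x) : x \in X,\ \tau_{\Sigma_0}(x) = \gamma\bigr\},
\]
with the convention that $p_X(\gamma)$ is taken below $0$ when the set is empty. Closedness of $X$ guarantees the supremum is attained when finite, and the inclusion $X \subset \{x : r_{\Sigma_0}(x) \leq p_X(\tau_{\Sigma_0}(x))\}$ is tautological.

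For the reverse inclusion, given $x' \in C^{\mathrm{an}}$ with $\tau_{\Sigma_0}(x') = \gamma$ and $r_{\Sigma_0}(x') \leq p_X(\gamma)$, pick $x \in X$ realizing the supremum; the path invariance places the whole $l_x$-arc inside $X$, but this only covers the branch at $\gamma$ containing $x$. To capture $x'$ on a possibly different branch, one exploits the $k$-definability of $Y$ together with the structure of the fiber $\tau^{-1}(\gamma) \subset \widehat{C}$: $k$-definable subsets of this fiber are forced to respect the branch symmetry of the residue curve at $\gamma$, while path connectedness of $Y$ in $\widehat{C}$ precludes pathological jumps within any single branch and bridges between branches via $\gamma$ itself. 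Together these should force $x' \in X$.

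For piecewise affineness, the set
\[
A = \bigl\{(\gamma, r) \in \widehat{\Sigma_0} \times \Gamma_\infty : \exists y \in Y,\ \tau(y) = \gamma,\ r_{\widehat{\Sigma_0}}(y) = r\bigr\}
\]
is $k$-definable, so its upper envelope is a definable function on the $\Gamma$-internal set $\widehat{\Sigma_0}$; by the classification of definable functions on $\Gamma$-internal sets in ACVF, such a function is piecewise $\mathbb{Q}$-affine, and transporting along the closed continuous map $\pi_{k,C}$ yields that $p_X$ is piecewise affine on $\Sigma_0$. The main obstacle will be the branch-symmetry step in the previous paragraph: verifying that a $k$-definable subset of $\widehat{C}$ which is preserved by the curve retraction actually meets each fiber $\tau^{-1}(\gamma)$ in a set depending only on the radius and not on the choice of branch. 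The remaining bookkeeping amounts to transferring radial structure faithfully between $\widehat{C}$ and $C^{\mathrm{an}}$ through $\pi_{k,C}$.
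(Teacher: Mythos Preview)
Your branch-symmetry step is not merely an obstacle to be filled in later; it is false as stated, and this is where the argument breaks. The claim that ``$k$-definable subsets of the fiber $\tau^{-1}(\gamma)$ are forced to respect the branch symmetry of the residue curve at $\gamma$'' fails because $k$ is algebraically closed: every branch at a type~II point is individually $k$-definable, so a $k$-definable $Y$ may extend to arbitrarily different depths down different branches. Concretely, on $\mathbb{P}^1$ with skeleton $\Sigma_0$ the convex hull of $\{0,\infty\}$, take $Y$ to be the closed ball of radius $1/2$ around $1$ together with the segment from $\zeta_{1,1/2}$ to the Gauss point $\zeta_{0,1}$. This $Y$ is closed, definably path-connected, $k$-definable, and satisfies $h(I\times Y)\subset Y$ for the standard retraction; yet at $\gamma=\zeta_{0,1}$ the branch through $1$ lies entirely in $Y$ while the branch through $2$ meets $Y$ only at $\gamma$. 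Forward-invariance of $Y$ gives only that $Y$ is star-shaped towards $\Sigma_0$, not that it is radial around $\Sigma_0$; the skeleton must be enlarged in a way depending on $Y$. The paper carries this out in Theorem~\ref{radiality of definable sets}, which produces a homotopy for which the exit-time function $\beta_{X,H}$ is constant along retraction fibers --- that theorem, not a definability heuristic, is the correct replacement for your branch-symmetry step.

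There is a second gap hidden in your use of $r_{\widehat{\Sigma_0}}$ as if it were a metric quantity on $\widehat{C}$. The Hrushovski--Loeser homotopy on $\widehat{C}$ is obtained by lifting $\psi_D$ from $\widehat{\mathbb{P}^1}$ through a finite map $f:C\to\mathbb{P}^1$, and its time parameter does \emph{not} agree with metric distance on $C^{\mathrm{an}}$: by Lemma~\ref{T_x and S_x are intercalculable} the path $t\mapsto h'^{\mathrm{an}}(t,x)$ moves at speed $1/d_i$ on the $i$-th segment, where the $d_i$ are the slopes of the profile function of $f$ at $x$. Thus even once $\beta_{X,H}$ is known to be constant along fibers in the \emph{time} parameter, one must still show that the time-to-distance conversion is also constant along fibers before one can conclude that the metric radius $p_X$ is well-defined on the skeleton. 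This is exactly what the backward branching index machinery (Definition~\ref{BB index}, Theorem~\ref{backward branching is constant along retractions}) is for, and the paper's proof of Theorem~\ref{definable subsets are radial} uses it to write $p_X$ explicitly as a piecewise-affine expression in the backward-branching data. Your $o$-minimality argument at the end would yield piecewise affineness of the exit time, not of $p_X$; passing from one to the other is the substance of the proof.
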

    
      Theorem \ref{definable subsets are radial} is the consequence of an analogous result for 
     Hrushovski-Loeser curves. 
     In order to discuss the radiality statement for definable subsets of $\widehat{C}$, 
     we introduce the notion of a definable homotopy. 
     
       A \emph{definable homotopy} on the curve $C$ is a continuous function
  $$h : [0,\infty] \times C \to \widehat{C}.$$ 
  In \cite[\S 7.5]{HL}, Hrushovski and Loeser construct such homotopies such that the 
  image $h(0,C) \subset \widehat{C}$ is $\Gamma$-internal.
  One may think of a $\Gamma$-internal subset of $\widehat{C}$ as the model theoretic 
  analogue of a subspace of $C^{\mathrm{an}}$ homeomorphic to a finite simplicial complex. 
  Henceforth, we assume that all definable homotopies under consideration have $\Gamma$-internal
  subsets as image. 
  We discuss the precise nature of these constructions briefly in \S \ref{homotopies of C}.
     
     Let $h$ be a definable homotopy of ${C}$ onto a $\Gamma$-internal set $\Sigma$. We
  associate to every definably path-connected definable subset of $\widehat{C}$
  a function $\alpha : C \to [0,\infty] \times \{\gamma_1,\gamma_2\}$
  where $\gamma_1 < \gamma_2 \in \Gamma_\infty[k]$.
  Although strictly speaking one cannot talk about metric properties of the curve 
  $\widehat{C}$, the function $\alpha$ in some sense measures the distance from the definable set 
  to $\Sigma$. Theorem \ref{radiality of definable sets} shows that
   given a definably path connected subset of $\widehat{C}$, 
   we can choose $h$ and $\Sigma$ so 
  the associated function $\alpha$ is constant along the fibres of the retraction associated 
  to $h$. 

\subsection{Backward branching index}  
  An important ingredient in the proof of Theorem \ref{definable subsets are radial} is the 
  finiteness property satisfied by the profile functions introduced above. By this we mean that
  if we are given a morphism $f : C' \to C$ then there exists skeleta 
  $\Sigma'$ and $\Sigma$ compatible for $f^{\mathrm{an}}$ 
  and such that $f_{-}^{\mathrm{an}}$ is constant along the fibres of the retractions 
  i.e. if $x,y \in C'(k)$ retract to the same point on $\Sigma'$ then 
  $f_x^{\mathrm{an}} = f_{y}^{\mathrm{an}}$. This is one of the main results 
  of Temkin in \cite{TEM3}.
  Our approach however is different in that we introduce an analogue of the profile functions 
  $f_x^{\mathrm{an}}$ within the framework of the Hrushovski-Loeser curves. 
  This new invariant is called the \emph{Backward branching index function} associated 
  to a definable homotopy.

     We reproduce the definition from \S \ref{backward branching index}.
    We suppose that the definable homotopy 
    $h : [0,\infty] \times C \to \widehat{C}$ lifts uniquely to a
    a definable homotopy $h' : [0,\infty] \times C' \to \widehat{C'}$ via $\widehat{f}$. 
       Let $\Sigma'$ denote the
        $\Gamma$-internal set which is the
         image of $h'$. 
          
          \begin{defi} The backward branching index of a point $x \in C'$ \emph{is a function}
          $$BB_{h',f}(x) : [0,\infty] \to \mathbb{N} \times [0,\infty]$$
         \emph{defined as follows. 
         If $t \in [0,\infty]$, let $B_{h',f}(x)(t)$ denote the set of 
          elements $z \in C'$ such that 
        $f(z) = f(x)$, 
        $h'(t,z) = h'(t,x)$ and $h'(t,z) \neq h'(0,x)$.
        Then 
        \begin{align*}
        BB_{h',f}(x)(t) := (\mathrm{card}(B_{h',f}(x)(t)), \Sigma_{z \in B_{h',f}(x)(t)} \mathrm{val}(z)).
        \end{align*}
        }
    
        \end{defi}
        
        By Remark \ref{definability of backward branching index}, 
        we
        thus have a function 
        $$BB_{h',g} : C \to \mathrm{Fn}([0,\infty],\mathbb{N} \times [0,\infty])$$
         where  $\mathrm{Fn}([0,\infty],\mathbb{N} \times [0,\infty])$ is the set of definable
        functions $[0,\infty] \to \mathbb{N} \times [0,\infty]$.
        
        In Theorem \ref{backward branching is constant along retractions}, we 
        show that the backward branching index functions can be made to satisfy a strong 
        finiteness property. More precisely, we show that
        we can choose $h'$ and $h$ so that if 
        $x,y \in C'(k)$ retract to the same point in $\Sigma'$ then
        $BB_{h',f}(x) = BB_{h',f}(y)$. The proof of this fact 
        exploits the explicit nature and flexibility with which Hrushovski and Loeser 
        construct definable homotopies on $C'$ and $C$.
        We relate the Backward branching indices to the 
        profile functions in Lemma \ref{T_x and S_x are intercalculable} and
        proceed from there to deduce in Theorem \ref{tameness of profile functions}
         the finiteness property discussed above
        for the profile functions.
        
 \subsection{Tameness of profile functions in families}

      By introducing the Backward branching index invariants and proving that 
      they determine the profile functions, we have set ourselves up to  
      prove a generalization of Theorem \ref{tameness of profile functions}. 
       In Theorem \ref{tameness of profile functions in families}, we
       study Theorem \ref{tameness of profile functions} in the context of 
       families of morphisms of smooth projective curves. 
       The Theorem \ref{tameness of profile functions} shows that 
        considerable information about the metric properties of a finite morphism can 
        be encoded in terms of combinatorial data i.e.
         the induced morphism of suitable skeleta and additional data on the skeleton of the 
        source that controls the profile functions we introduced earlier. 
        The goal of Theorem \ref{tameness of profile functions in families} is 
         to show that given a family of morphisms of smooth projective curves 
         the associated family of combinatorial data itself satisfies a nice finiteness property.

       The framework within which we work is the following. 
       Let $S$ be a quasi-projective $k$-variety.
       Let $\alpha_1 : X \to S$ and $\alpha_2 : Y \to S$ be smooth morphisms of quasi-projective $k$-varieties.  
 Let $f : X \to Y$ be a morphism such that for every $s \in S$, 
  $f_s : X_s \to Y_s$ is a finite separable morphism of smooth projective irreducible $k$-curves.
  
  The results that we discussed previously, notably Theorem 
  \ref{tameness of profile functions}, 
  imply that 
  for every $s \in S(k)$, 
  there exist deformation retractions 
  $$h_s'^{\mathrm{an}} : [0,\infty] \times X_s^{\mathrm{an}} \to X_s^{\mathrm{an}}$$ and 
  $$h_s^{\mathrm{an}} : [0,\infty] \times Y_s^{\mathrm{an}} \to Y_s^{\mathrm{an}}$$
  such that the following hold. 
   \begin{enumerate} 
  \item The images $\Sigma_s' := {h'}_s^{\mathrm{an}}(0,X_s^{\mathrm{an}})$ and $\Sigma_s = h_s^{\mathrm{an}}(0,Y_s^{\mathrm{an}})$ are
   skeleta of the curves $X_s^{\mathrm{an}}$ and $Y_s^{\mathrm{an}}$. 
  \item 
  Given $x \in X_s(k)$, let $\mathrm{prfl}(f_s^{\mathrm{an}},x)$ denote the profile function at $x$
  associated to $f_s$.
  This temporary change in notation is to prevent any ambiguity that might arise since 
  we regard $f_s^{\mathrm{an}}$ as the analytification of the morphism 
  $f_s : X_s \to Y_s$. 
   We then have that  
  for every $x_1,x_2 \in X_s(k)$,
if  $h_s'^{\mathrm{an}}(0,x_1) = h_s'^{\mathrm{an}}(0,x_2)$ then 
 $\mathrm{prfl}(f_s^{\mathrm{an}},x_1) = \mathrm{prfl}(f_s^{\mathrm{an}},x_2)$.
 \end{enumerate}  
  Observe that (2) above allows us to extend the function $\mathrm{prfl}(f_s^{\mathrm{an}},-)$ 
  to the skeleton $\Sigma'_s$. Indeed, given $p \in \Sigma'_s$, 
  we set $\mathrm{prfl}(f_s^{\mathrm{an}},-)$ to be the function 
  $\mathrm{prfl}(f_s^{\mathrm{an}},x)$ for any $x$ that retracts to $p$. 
  By (2), $\mathrm{prfl}(f_s^{\mathrm{an}},p)$ is well defined and we have a function 
  $$\mathrm{prfl}(f_s^{\mathrm{an}},-) : \Sigma'_s \to \mathrm{Fn}((0,\infty],(0,\infty])).$$
  
  The principal assertion of Theorem \ref{tameness of profile functions in families} concerns 
  a finiteness property for the family 
  $$\{f_s^{\mathrm{an}} : \Sigma'_s \to \Sigma_s, \mathrm{prfl}(f_s^{\mathrm{an}},-)_{|\Sigma'_s}\}_{s \in S}.$$ 
More precisely, we show that
  there exists a deformation retraction 
  $g^{\mathrm{an}} : I \times S^{\mathrm{an}} \to S^{\mathrm{an}}$ whose image $\Sigma(S) \subset S^{\mathrm{an}}$ 
  is homeomorphic to a finite simplicial complex and is such that the following holds. 
   The tuple $(f_s^{\mathrm{an}} : \Sigma'_s \to \Sigma_s,\mathrm{prfl}(f_s^{\mathrm{an}},-))$ is constant along the fibres of the retraction of the deformation 
  $g^{\mathrm{an}}$ i.e. if $e$ denotes the end point of the interval $I$ and 
  if $s_1,s_2 \in S(k)$ are such that $g^{\mathrm{an}}(e,s_1) = g^{\mathrm{an}}(e,s_2)$ then 
  $\Sigma'_{s_1} = \Sigma'_{s_1}$ and $\Sigma_{s_1} = \Sigma_{s_1}$. Furthermore, with these identifications 
  $(f_{s_1}^{\mathrm{an}})_{|\Sigma'_{s_1}} = (f_{s_2}^{\mathrm{an}})_{|\Sigma'_{s_2}}$ and 
 $\mathrm{prfl}(f_{s_1}^{\mathrm{an}},-)_{\Sigma'_{s_1}} = \mathrm{prfl}(f_{s_2}^{\mathrm{an}},-)_{|\Sigma'_{s_2}}$.

               This theorem represents one of the primary reasons behind introducing the Backward branching
       indices as they allow us to use the Hrushovski-Loeser construction to control the combinatorial data 
       as it varies in families. 
       It should be noted that constructing deformations in higher dimensions
       is in general very difficult and the Hrushovski-Loeser construction is the only tool
       that allows for some flexibility.

\subsection{Notation} : We fix the field $k$ which is algebraically closed complete with respect to a non-Archimedean non-trivial real valuation. 
By a $k$-curve, we mean a connected variety of dimension $1$.   
\\

\noindent \textbf{Acknowledgements:} We are grateful to Jérôme Poineau for inviting us to the \emph{Berkovich spaces, Tropical geometry and Model theory} summer school
 held in Bogota in July 2017 where he asked us a question concerning the structure of definable subsets of $\widehat{C}$. This question inspired some of the main results of our paper.
 We are also grateful to the other participants at the school and most importantly Pablo Cubides for organising and the discussions. We are indebted to François Loeser, 
 Michael Temkin and Antoine Ducros
 for the discussions at the Oberwolfach workshop in December, 2015. This article would not have been possible without the support and mentorship of Johannes Nicaise 
 and Imperial College, London. 
 Lastly, we thank Andreas Gross for his support and always being available to answer questions. During the period in which this article was written, we were
  supported by the ERC Starting Grant MOTZETA (project 306610) of the European Research Council (PI: Johannes Nicaise).

\section{Definable subsets of $\widehat{C}$} \label{definable subsets of C}
  
      The theory ACVF is an $\mathcal{L}_{k\Gamma}$-theory where 
      $\mathcal{L}_{k\Gamma}$ is the three sorted language 
      consisting of the valued field sort VF, value group sort $\Gamma$ and 
      residue field sort $k$. We add the functions, relations and constants 
      necessary to make any structure of $\mathcal{L}_{k\Gamma}$ 
      a valued field. 
       Within this framework, the theory ACVF consists of those sentences (formulae without quantifiers)
      such that any model of ACVF is an algebraically closed field. 
      Although $\mathcal{L}_{k\Gamma}$ is easily defined, we 
      note that it does not eliminate imaginaries and hence we work in the extended 
      language $\mathcal{L}_{\mathcal{G}}$ (cf. \cite[\S 2.7]{HL}) where we add the geometric sorts  
      to $\mathcal{L}_{k\Gamma}$. For simplicity, we write $\mathcal{L} := \mathcal{L}_{\mathcal{G}}$. 
      For an introduction to Model theory, we refer the reader to \cite[\S 1]{M02}.  
   
   \subsection{Definable sets}   
         We fix a large saturated model $\mathbb{U}$ of ACVF as in \cite[\S 2.1]{HL} 
                              We introduce the notion of a definable set as in \cite{HL} which is 
                              of a more geometric flavour than that 
              presented in \cite{M02}. 
                Let $C \subset \mathbb{U}$ be a parameter set. 
                Let $\phi$ be an $\mathcal{L}$-formula. The formula $\phi$ can be used to define a
                 functor $Z_\phi$ from the category whose objects are $\mathcal{L}_C$-models \footnote{Here $\mathcal{L}_C$ 
                 denotes the extension of $\mathcal{L}$ obtained by adding constants corresponding to the parameter set $C$.}
                 of 
                     $\mathrm{ACVF}$ and morphisms
                      are elementary embeddings to the category of sets. Let 
                      $\{x_1,\ldots,x_{m}\}$ be the set of variables which occur in $\phi$.
                      Given an $\mathfrak{L}_C$-model ${M}$ of ACVF i.e. a model of ACVF that contains $C$, we set 
                     \begin{align*}
                        Z_{\phi}({M}) = \{\bar{a} \in  M^m | {M} \models \phi(\bar{a}) \} .  
                     \end{align*}  
                           Clearly, $Z_{\phi}$ is well defined. 
                  
                \begin{defi} \emph{
                      A} $C$-definable set  $Z$
                     \emph{is a functor from the category whose objects are $\mathcal{L}_C$-models of 
                     ACVF and morphisms are elementary embeddings to the category of sets such that there exists an $\mathfrak{L}_C$-formula 
                     $\phi$ and $Z = Z_{\phi}$. } 
                \end{defi}  
                
       \begin{es} 
      \emph{  Consider the following examples of three different classes of definable sets in ACVF. Observe that these objects 
        appear naturally in the study of Berkovich spaces, tropical geometry and algebraic geometry. 
      \begin{enumerate}
      \item  Let $K$ be a model of ACVF and $A \subset K$ be a set of parameters. Let $n \in \mathbb{N}$. 
       A semi-algebraic subset of $K^n$ is a finite boolean combination of sets of the form 
       $$\{x \in K^n | \mathrm{val}(f(x)) \geq \mathrm{val}(g(x))\}$$
       where $f,g$ are polynomials with coefficients in $A$. One verifies that semi-algebraic sets 
       extend naturally to define $A$-definable sets in ACVF. 
      \item Let $G = \Gamma(K)$ where $K$ is as above. A $G$-rational polyhedron 
      is a finite Boolean combination of subsets of the form 
      $$\{(a_1,\ldots,a_n) \in G^n | \Sigma_i z_ia_i \leq c \}$$
      where $a_i \in \mathbb{Z}$ and $c \in G$. Such objects extend
      naturally to define a $K$-definable subset of $\Gamma^n$ where $\Gamma$ is the value group sort. 
      \item  Any constructible subset of $k^n$ gives a definable set in a natural way. 
       \item Let $C$ be a $k$-curve. Then the space of stably dominated types - $\widehat{C}$ is a $k$-definable set. 
       This is unique to the one dimensional case i.e. the space 
       $\widehat{V}$ where $V$ is a $k$-variety of dimension atleast $2$ 
       is \emph{strictly pro-definable}. 
       \end{enumerate} }
       \end{es}         
%
\subsection{Homotopies on $\widehat{C}$.} \label{homotopies of C}
   
        Let $C$ be a projective $k$-curve. The space $\widehat{C}$ is Hausdorff and 
       definably path connected. Furthermore, it admits a definable deformation 
        retraction 
        $$H : [0,\infty] \times \widehat{C} \to \widehat{C}$$ such that the image 
        of $H$ is a $k$-definable $\Gamma$-internal set i.e. a $k$-definable subset 
        of $\widehat{C}$ that is in $k$-definable bijection with a $k$-definable 
      subset of $\Gamma^n$ for some $n \in \mathbb{N}$. 
      We briefly explain the construction of the homotopy $H$ from \cite[\S 7.5]{HL} 
      as this will play a crucial role in the following sections. 
      
          We fix a system of coordinates on $\mathbb{P}^1$. 
          Let $m : \mathbb{P}^1 \times \mathbb{P}^1 \to \Gamma_\infty$ 
          be the standard metric as in \cite[\S 3.10]{HL}. 
         Let $\mathcal{O}_0 := \{x \in \mathbb{P}^1 | m(x,0) \geq 0\}$ and 
         $\mathcal{O}_\infty := \{x \in \mathbb{P}^1 | m(x,\infty) \geq 0\}$. 
         Observe that $\mathcal{O}_0$ and $\mathcal{O}_\infty$ are
         definable subsets of $k$ and are the closed unit disk around $0$ and 
         $\infty$ respectively. 
       Using $m$, we define a $v+g$-continuous homotopy, 
       $\psi : [0,\infty] \times \mathbb{P}^1 \to \widehat{\mathbb{P}^1}$    
    by mapping $(t,a)$ to the generic type \cite[Example 3.2.1]{HL} of the 
    valuative ball 
    $$\{x \in \mathbb{P}^1| m(x,a) \geq t\}.$$ 
    The map $\psi$ extends to 
    define a map 
    $\widehat{\psi} : [0,\infty] \times \widehat{\mathbb{P}^1} \to \widehat{\mathbb{P}^1}$
    whose image is the generic type of the ball $\mathcal{O}_0$ and is hence $\Gamma$-internal. 
    
     Let $D \subset \mathbb{P}^1(k)$ be a finite set. 
     We define ${m}_D : \mathbb{P}^1 \to \Gamma_\infty$ 
     by $x \mapsto \mathrm{max}\{m(x,d) | d \in D\}$. 
     We can extend the homotopy 
     $\psi$ above to a homotopy 
     \begin{align*} 
     \psi_D : [0,\infty] &\times \mathbb{P}^1 \to \widehat{\mathbb{P}^1}  \\
     &(t,a) \mapsto \psi(\mathrm{max}\{t,{m}_D(a)\},a)
     \end{align*}
     As before, $\psi_D$ extends to 
     a homotopy 
     $\widehat{\psi_D} : [0,\infty] \times \widehat{\mathbb{P}^1} \to \widehat{\mathbb{P}^1}$
     whose
     image $\Upsilon_D$ is the convex hull of $D$ in $\widehat{\mathbb{P}^1}$.
     We refer to $\psi_D$ as the \emph{cut-off} of $\psi$ with respect to the function 
     $m_D$ (cf. \cite[Lemma 10.1.6]{HL}).
%

       A homotopy $h : [0,\infty] \times C \to \widehat{C}$ is constructed 
       as follows. 
       Let $f : C \to \mathbb{P}^1$ be a finite morphism. 
        By \cite[\S 7.5.1]{HL}, there exists a divisor $D \subset \mathbb{P}^1$ 
        such that 
        ${\psi_D}$ lifts \emph{uniquely} to a 
        definable map 
        $h : [0,\infty] \times C \to \widehat{C}$. 
        This means that there exists a unique
        $h : [0,\infty] \times C \to \widehat{C}$ such that 
        the diagram below commutes. 

$$
 \begin{tikzcd} [row sep = large, column sep = large] 
I \times C \arrow[r,"H"] \arrow[d, "id \times f"] &
\widehat{C}  \arrow[d, "\widehat{f}"] \\
I  \times \mathbb{P}^1 \arrow[r,"{\psi_D}"] &
\widehat{\mathbb{P}^1} 
\end{tikzcd}
$$        
    where $I := [0,\infty]$. 
    By \cite[Lemma 10.1.1]{HL}, 
    the uniqueness of the map $h$ guarantees that the canonical extension (cf. \S 3.8 in loc.cit.) 
    $H : [0,\infty] \times \widehat{C} \to \widehat{C}$ 
    is a homotopy whose image is $\widehat{f}^{-1}(\Upsilon_D)$ and is $\Gamma$-internal. 
   Note that we often say a map 
   $g : [0,\infty] \times C \to \widehat{C}$ is a homotopy.
   By this we mean that it is $v+g$-continuous and hence its canonical extension 
   $G : [0,\infty] \times \widehat{C} \to \widehat{C}$ is a homotopy.

   \begin{rem} \label{metric deformations}
    \emph{We make the following observation that is crucial to relating the 
    deformation retractions above to the metric structure of the Berkovich curve $C^{\mathrm{an}}$. 
    Recall from \S 1, the field $k^{max}$ which is a 
    maximally complete algebraically closed extension of $k$ whose value group is $\mathbb{R}^*$.
    We have a morphism 
    $\pi_{k,C} : \widehat{C}(k^{max}) \to C^{\mathrm{an}}$ that factors through the canonical
    homeomorphism $\pi : \widehat{C}(k^{max}) \to C_{k^{max}}^{\mathrm{an}}$.}
    
    \emph{The map $\pi_{k,\mathbb{P}^1}$ is such that if $a \in \mathbb{P}^1(k)$ and 
    $\gamma \in \Gamma(k)$ then the generic type of the ball around 
    $a$ of valuative radius $\gamma$ belongs to $\widehat{\mathbb{P}^1}(k^{max})$
    and $\pi_{k,\mathbb{P}^1}$ maps this point to the point $\zeta_{a,-\mathrm{log}(\gamma)}$.} 
     
    \emph{Let $D \subset \mathbb{P}^1(k)$ be a divisor. 
    Let  $x \in \widehat{\mathbb{P}^1}(k) \subset \widehat{\mathbb{P}^1}(k^{max})$. 
   Let $t_1,t_2 \in [0,\infty](\mathbb{R})$ with $t_1 < t_2$ and 
     such that the map $t \mapsto \widehat{\psi_D}(t,x)$ is injective when restricted 
     to $[t_1,t_2]$. Let $\rho$ denote the metric on 
   the space $\mathbb{P}^{1,\mathrm{an}}$ which we refer to as its skeletal metric or 
   path distance metric (cf. \cite[\S 5]{BPR}). 
   We then have that $$t_2 - t_1 = \rho(\pi_{k,\mathbb{P}^1}\psi_D(t_1,x),\pi_{k,\mathbb{P}^1}\psi_D(t_2,x)).$$}
   \end{rem}  
    
   \subsection{Radiality} \label{radiality}
      
            One of our goals is to understand the topological nature of definable subsets. 
            In general, a definable subset of $\widehat{C}$ does not necessarily have finitely many 
           definably path connected components. For example, consider $C \subset \widehat{C}$. 
            Hence we restrict our attention to definably path connected definable sets
            and use in a novel way the homotopies discussed in \S\ref{homotopies of C} 
             to study them. 
 
         Let $C$ be a projective $k$-curve. 
           Let $H : [0,\infty] \times \widehat{C} \to \widehat{C}$ be a homotopy 
        of $\widehat{C}$ with image $\Upsilon$ as constructed in
         \S \ref{homotopies of C}. 
        Let $\mathfrak{A}(H)$ denote the set of 
     definably path connected definable subsets of $\widehat{C}$
     that intersect $\Upsilon$ in atleast one point. 
   
    Let $\gamma_1 < \gamma_2 < 0$ be elements in $[0,\infty]$. 
     Let $\mathfrak{B}(H)$ denote the set of 
       definable functions (cf. \cite[\S 3.7]{HL})
      $\alpha : C \to [0,\infty] \times \{\gamma_1,\gamma_2\}$
     such that the following properties hold.
    \begin{enumerate}
    \item  For every $x,y \in C$
     if 
     $$t_{x,y} := \mathrm{sup}\{t \in [0,\infty] | H(t,x) = H(t,y)\}$$ 
     then 
     \begin{align*} 
    t_{x,y} \leq & p_1(\alpha(x)) \implies t_{x,y} \leq p_1(\alpha(y)) \\ 
    &\mbox{and} \\ 
     t_{x,y}  > & p_1(\alpha(x)) \implies \alpha(x) = \alpha(y)
    \end{align*} 
    where $p_1$ is the projection onto the first coordinate.
    \item Given $x \in C(k)$, let $e(x)$ be the supremum of the set of 
    elements $t \in [0,\infty]$ such that $H(-,x)$ is constant on the interval $[0,t]$.
    We have that for every $x \in C(k)$, $\alpha(x) \in [e(x),\infty]$. 
    \item The complement in $\Upsilon$ 
    of the space $\{H(0,x) | \alpha(x) = (e(x),\gamma_2)\} \subset \Upsilon$ 
     is non-empty and definably path connected. 
    \end{enumerate}
            The properties required of the elements in 
      $\mathfrak{B}(H)$ is motivated by the following proposition. 
      
            \begin{prop} \label{classification of definable sets} 
       Let $C$ be a projective $k$-curve. 
 Let $H : [0,\infty] \times \widehat{C} \to \widehat{C}$ be a homotopy 
        of $\widehat{C}$ with image $\Upsilon$ as constructed in
        Section \S \ref{homotopies of C}.
                      We then have a bijection 
             $\beta : \mathfrak{A}(H) \to \mathfrak{B}(H)$.   
       \end{prop}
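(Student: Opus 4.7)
The plan is to exhibit the bijection $\beta$ explicitly by reading off, for each $x \in C$, the behaviour of $X$ along the trajectory $t \mapsto H(t,x)$. Given $X \in \mathfrak{A}(H)$, I would define $\beta(X)(x) := (\tau_x, \epsilon_x)$, where $\tau_x \in [0,\infty]$ is the extremal time along the trajectory $t \mapsto H(t,x)$ at which $X$-membership transitions, and the flag $\epsilon_x \in \{\gamma_1,\gamma_2\}$ records whether the retraction $H(0,x)$ lies in $X$ (setting, say, $\epsilon_x = \gamma_1$ when $H(0,x) \in X$ and $\epsilon_x = \gamma_2$ otherwise). The inverse is the natural sweeping map: for $\alpha \in \mathfrak{B}(H)$, set $\beta^{-1}(\alpha) := X_\alpha := \bigcup_{x \in C}\{H(t,x) : t \in I_x\}$, where $I_x \subset [0,\infty]$ is an initial segment of length $p_1(\alpha(x))$ when $p_2(\alpha(x)) = \gamma_1$, and the corresponding complementary sub-interval when $p_2(\alpha(x)) = \gamma_2$.

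Next I would verify that $\beta(X) \in \mathfrak{B}(H)$ condition by condition. Definability of $\beta(X)$ follows from that of $X$ and $H$. Condition (1) reflects the key compatibility: when $H(s,x) = H(s,y)$ for all $s \leq t_{x,y}$, the two trajectories meet $X$ identically on the shared initial segment, so any transition occurring strictly before $t_{x,y}$ is forced to occur simultaneously for $x$ and $y$ with the same flag. Condition (2) follows from the fact that $H(-,x)$ is stationary on $[0,e(x)]$: either the retraction lies in $X$ and $\tau_x \geq e(x)$, or it does not and $\beta(X)(x) = (e(x),\gamma_2)$. Condition (3) is the translation of definable path-connectedness of $X$ into path-connectedness of $X \cap \Upsilon$, which I would obtain using that $H(0,\cdot) : \widehat{C} \to \Upsilon$ is a continuous retraction and that $\Upsilon$ is $\Gamma$-internal.

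For the reverse direction, given $\alpha \in \mathfrak{B}(H)$, I would show that $X_\alpha$ is a definable, definably path-connected subset intersecting $\Upsilon$. Definability is immediate from that of $\alpha$ and $H$. Condition (1) on $\alpha$ ensures that the trajectory-segment recipe defining $X_\alpha$ glues coherently whenever two trajectories share an initial segment. Since each trajectory segment $\{H(t,x) : t \in I_x\}$ in the union defining $X_\alpha$ meets $\Upsilon$, the path-connectedness of $X_\alpha$ reduces to that of $X_\alpha \cap \Upsilon$, which condition (3) supplies; non-emptiness of $X_\alpha \cap \Upsilon$ also follows directly from condition (3).

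Finally, I would verify $\beta^{-1} \circ \beta = \mathrm{id}_{\mathfrak{A}(H)}$ and $\beta \circ \beta^{-1} = \mathrm{id}_{\mathfrak{B}(H)}$ by unwinding the definitions. The main obstacle is the careful handling of the flag $\{\gamma_1,\gamma_2\}$, which encodes a qualitative distinction (whether $H(0,x) \in X$) that is not visible from the transition time $\tau_x$ alone and is essential for the bijection to be exact rather than merely up to the boundary behaviour on $\Upsilon$. A secondary subtlety is the passage from definable path-connectedness of $X$ to that of $X \cap \Upsilon$, which requires exploiting both the continuity of the retraction $H(0,\cdot)$ and the $\Gamma$-internality of $\Upsilon$ so that the trace $X \cap \Upsilon$ can be identified with a well-behaved quotient of $X$.
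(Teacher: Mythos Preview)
Your overall architecture matches the paper's: define $\beta(X)$ by reading off trajectory behaviour, define an inverse by sweeping out segments, and check the two are mutually inverse. The gap is in what the flag $\{\gamma_1,\gamma_2\}$ encodes. In the paper, for $x$ whose trajectory meets $X$ one sets $\beta'_{X,H}(x)=\sup\{t:H(t,x)\in X\}$ and then the flag is $\gamma_1$ if $H(\beta'_{X,H}(x),x)\in X$ (the supremum is attained) and $\gamma_2$ otherwise; the value $(e(x),\gamma_2)$ is reserved for trajectories missing $X$ entirely. Your flag instead records whether $H(0,x)\in X$. But because $\widehat{C}$ is tree-like and $X$ is definably path-connected meeting $\Upsilon$, any trajectory that meets $X$ at all meets it along an initial segment containing $H(0,x)$; hence your flag degenerates to ``$\gamma_1$ iff the trajectory meets $X$'' and you lose the open/closed information at the far endpoint $\tau_x$. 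Concretely, for the closed segment $\{H(t,x):t\in[0,\tau]\}$ and the half-open segment $\{H(t,x):t\in[0,\tau)\}$ (both in $\mathfrak{A}(H)$) your $\beta$ returns the same function, so it is not injective; and since your $\beta$ never outputs $(\tau,\gamma_2)$ with $\tau>e(x)$, it is not surjective onto $\mathfrak{B}(H)$ either.

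Your proposed inverse has the matching defect: taking $I_x$ to be the ``complementary sub-interval'' when the flag is $\gamma_2$ would, for $\alpha(x)=(\tau,\gamma_2)$ with $\tau>e(x)$, contribute a terminal segment $\{H(t,x):t>\tau\}$ to $X_\alpha$; such a set does not sit in $\mathfrak{A}(H)$ along that trajectory and is not what $\mathfrak{B}(H)$ is meant to parametrise. The correct inverse uses the initial segment $[0,p_1(\alpha(x))]$ when the flag is $\gamma_1$ and $[0,p_1(\alpha(x)))$ when it is $\gamma_2$: the flag governs only whether the endpoint is included. With this single correction your plan for verifying conditions (1)--(3) and the two round-trip identities is sound and essentially what the paper does.
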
 
       \begin{proof} 
      Let $X \subset \widehat{C}$ 
       be a
       definably path connected definable subset. 
       We define a
       function $\beta_{X,H} : C \to [0,\infty]$
       as follows. 
       Let $a \in C$. 
       The homotopy 
       ${H}$ defines 
       a  path $[0,\infty] \to \widehat{C}$ given 
       by $t \mapsto H(t,a)$. 
       Suppose that 
       $H(t,a) \in X$ for some $t \in [0,\infty]$. 
        Let 
       $$\beta'_{X,H}(a) := \mathrm{sup}\{t \in [0,\infty] | H(t,a) \in X\}.$$
       If $H(\beta'_{X,H}(a),a)$ belongs 
       to $X$ then 
       we set 
      $$\beta''_{X,H}(a) := \gamma_1$$ and 
      if $H(\beta'_{X,H}(a),a)$ does not belong 
       to $X$ then 
       we set 
      $$\beta''_{X,H}(a) := \gamma_2.$$
      Let $\beta_{X,H}(a) := (\beta'_{X,H}(a),\beta''_{X,H}(a))$. 
      Suppose on the other hand that for every $t \in [0,\infty]$,
       $H(t,a) \notin X$. 
       We then define 
       $\beta_{X,H}(a) := (e(a),\gamma_2)$ where the notation is as introduced 
       above. 
      
      Clearly, 
      $\beta_{X,H}$ defines a map $C \to [0,\infty] \times \{\gamma_1,\gamma_2\}$
     that is 
      definable and 
      one checks that 
      $\beta_{X,H} \in \mathfrak{B}(H)$.
       We verify the surjectivity of the map $X \mapsto \beta_{X,H}$ as follows.  
       Suppose 
       $\alpha : C \to [0,\infty]$ belonged to the family 
       $\mathfrak{B}(H)$. 
       We define  
       $X_\alpha$ to be the set of $z \in \widehat{C}$ such that there 
       exists $x \in C$, $t \in [0,\alpha(x)]$ and
       $z = H(t,x)$.  
        Clearly, $X_{\alpha}$ is a definable set since 
        $C$ is definable and $\alpha$ is a definable function.
      Let $S' \subset \Upsilon$ be the set 
     $\{H(0,x) | \alpha(x) = (e(x),\gamma_2)\}$ 
     and let $S$ denote its complement in $\Upsilon$. 
     By condition (2) above, we see that 
     $S$ is definably path connected and non-empty.
     We deduce that $X_\alpha$ intersects $\Upsilon$ non-trivially
     and must be definably path connected. 
     One verifies directly that 
     if $X \in \mathfrak{A}(H)$ then
     $X_{\beta_{X,H}} = X$ and 
     if $\alpha \in \mathfrak{B}(H)$ then 
     $\beta_{X_\alpha,H} = \alpha$. 
              \end{proof}

     \begin{thm} \label{radiality of definable sets}
     Let $C$ be a projective $k$-curve and
     let $f : C \to \mathbb{P}^1$ be a finite morphism. 
     Let $X \subset \widehat{C}$ 
       be a definably path connected definable subset. 
     There exists a 
     homotopy 
     $H_1 : [0,\infty] \times \widehat{C} \to \widehat{C}$
     whose image $\Upsilon$ is a 
       $\Gamma$-internal subset of $\widehat{C}$ 
       such that $X \in \mathfrak{A}(H_1)$ and the 
       function $\beta_{X,H_1}$ (as in Proposition \ref{classification of definable sets})
        is constant along the fibres of the 
       retraction. Furthermore, $H_1$ is the unique lift of a homotopy 
       $\widehat{\psi_{D_1}}$ where $D_1 \subset C$ is a Zariski closed subset of $C$. 
       \end{thm}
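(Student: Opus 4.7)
The plan is to exploit the flexibility of the Hrushovski--Loeser construction recalled in Section \ref{homotopies of C}: for any sufficiently large finite divisor there is a canonical lifted homotopy, and enlarging the divisor refines the resulting retraction without destroying $\Gamma$-internality. The strategy is to start from a base homotopy, locate the finitely many rigid points of $C$ responsible for $\beta_{X,H}$ failing to be constant on fibres of the retraction, and enlarge the divisor to pass through their images.

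First, I would fix an auxiliary finite subset $D_0 \subset \mathbb{P}^1(k)$ large enough that $\widehat{\psi_{D_0}}$ lifts uniquely to a definable homotopy $h_0$ of $C$ in the sense of \cite[\S 7.5.1]{HL}, with canonical extension $H_0$ having $\Gamma$-internal image $\Upsilon_0 = \widehat{f}^{-1}(\Upsilon_{D_0})$. By Proposition \ref{classification of definable sets} this yields the function $\beta_{X,H_0} \in \mathfrak{B}(H_0)$. Since $X$, $H_0$, and the retraction $r_0 := H_0(0,-)$ are all definable in the geometric sorts, the locus in $C$ where $\beta_{X,H_0}$ is not constant on its $r_0$-fibre is a definable subset. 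The homotopy moves each fibre of $r_0$ monotonically along the single path $t \mapsto H_0(t,a)$, and a definable subset of a valuative ball in ACVF is controlled by finitely many algebraic parameters; therefore the places along each branch where $\beta_{X,H_0}$ can jump correspond to finitely many rigid witness points.

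Collecting these witnesses globally, I expect to extract a finite Zariski closed subset $E \subset C(k)$ with the property that once the skeleton is forced to pass through every point of $E$, the restriction of $X$ to every retraction fibre is forced to have a single exit value. I then take $D_1 := D_0 \cup E$ (or, equivalently, work with the divisor $f(E) \subset \mathbb{P}^1$ downstairs) and let $H_1$ be the canonical extension of the unique lift of $\widehat{\psi_{D_1}}$. By construction the image $\Upsilon_1 = \widehat{f}^{-1}(\Upsilon_{D_1})$ is $\Gamma$-internal and refines $\Upsilon_0$ precisely at the witness points, so the fibres of $H_1(0,-)$ are strictly finer there. A direct inspection of the definition of $\beta$ in Proposition \ref{classification of definable sets} then shows that along each fibre of $H_1(0,-)$ the branch behaviour of $X$ is uniform, so $\beta_{X,H_1}$ is constant on fibres and $X \in \mathfrak{A}(H_1)$.

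The main obstacle is the finiteness at the heart of the second step: showing that the bad locus for $\beta_{X,H_0}$ is supported on only finitely many rigid points of $C$, and that enlarging $D_0$ at those points genuinely resolves the non-constancy. This requires combining the $o$-minimality of $\Gamma$, which handles branches emanating from $\Upsilon_0$ one at a time, with quantifier elimination in ACVF in the geometric sorts, which tightly restricts how the definable set $X$ can intersect a valuative ball. Once this finiteness is in hand, the rest is a routine verification using the uniqueness of the canonical lift from \cite[Lemma 10.1.1]{HL}.
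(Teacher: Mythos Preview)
Your overall architecture is correct: start from a lifted homotopy $H_0$ coming from some divisor $D_0$, enlarge the divisor, and check that the refined homotopy $H_1$ does the job. But the heart of the argument---your second paragraph---has a genuine gap, and the paper's proof proceeds by a different mechanism.

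You propose to locate finitely many rigid ``witness points'' $E\subset C(k)$ at which $\beta_{X,H_0}$ ``jumps'' along branches, and then set $D_1=D_0\cup E$. This is not how the finiteness arises, and the heuristic you give (``$o$-minimality of $\Gamma$'' together with ``QE in ACVF restricts how $X$ meets a ball'') does not pin down a finite set of $k$-points. The function $\beta_{X,H_0}$ is a definable map $C\to\Gamma_\infty\times\{\gamma_1,\gamma_2\}$; its failure to be constant on a retraction fibre is not localised at isolated rigid points in that fibre but reflects the global shape of $X$ inside the ball. There is no direct ``bad locus is finite'' statement available here.

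The paper instead treats $\xi_1:=p_1\circ\beta_{X,H}$ and $\xi_2:=p_2\circ\beta_{X,H}$ as abstract definable functions $C\to\Gamma_\infty$ and invokes \cite[Lemma~10.2.3]{HL}: this produces finitely many definable functions $\xi'_1,\dots,\xi'_r:\mathbb{P}^1\to\Gamma_\infty$ and a divisor $D_0$ such that any lifted homotopy which preserves the $\xi'_i$ and fixes $D_0$ automatically preserves the $\xi_i$. One then enlarges $D_0$ so that on $U_0=\mathbb{P}^1\smallsetminus D_0$ each $\xi'_i$ is piecewise of the form $\mathrm{val}(g)$ for regular $g$, with the pieces cut out by inequalities $\mathrm{val}(a)\geq\mathrm{val}(b)$ for regular $a,b$. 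Taking $D_1$ to contain $D$, $D_0$, and all zeros of these auxiliary regular functions forces $\psi_{D_1}$ to preserve every $\xi'_i$, hence $H_1:=H[m_{D_1}\circ f]$ preserves $\beta_{X,H}$. So the finite set you are looking for is not a set of jump points of $\beta$ at all; it is the zero locus of regular functions produced by quantifier elimination applied to $\beta_{X,H}$ after descent to $\mathbb{P}^1$.

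There is also a second step you elide. What one obtains first is that $\beta_{X,H}$ (for the \emph{original} $H$) is constant on the fibres of the \emph{new} retraction $H_1(0,-)$. One must then argue separately that $\beta_{X,H_1}$ is constant on these fibres, which the paper does by comparing $\beta_{X,H}$ and $\beta_{X,H_1}$ on a fibre via the cut-off time $t_y=\sup\{t:H_1(t,y)\in\Upsilon_1\}$ and splitting into the cases $p_1(\beta_{X,H}(y))\leq t_y$ and $p_1(\beta_{X,H}(y))>t_y$. Your final sentence (``a direct inspection\dots shows\dots'') covers exactly this point but is too quick; it needs the case analysis.
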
 
    \begin{proof} 
      Let $h : [0,\infty] \times C \to \widehat{C}$ be a homotopy that is the unique 
      lift of a homotopy 
      $\psi_D : [0,\infty] \times \mathbb{P}^1 \to \widehat{\mathbb{P}^1}$ where 
      $D \subset \mathbb{P}^1$ is a divisor. 
      We use $H$ to denote the canonical extension 
      $[0,\infty] \times \widehat{C} \to \widehat{C}$ of $h$. 
      Let $\xi_1 := p_1 \circ \beta_{X,H}$ and $\xi_2 := p_2 \circ \beta_{X,H}$
       where $p_i : \Gamma_\infty^2 \to \Gamma_\infty$ is the projection map 
       to the $i$-th coordinate. 
        By \cite[Lemma 10.2.3]{HL}, there exists a finite family 
        $\{\xi'_1,\ldots,\xi'_r\}$ of definable functions 
        $\mathbb{P}^1 \to \Gamma_\infty$ and a divisor $D_0 \subset \mathbb{P}^1$ 
        such that  
        any homotopy on $C$ that is the lift of a homotopy on $\mathbb{P}^1$
        that preserves the functions \footnote{We say a homotopy $h$ preserves a function $\xi$ if the function 
        $\xi$ is constant for the retraction map.} $\xi'_i$ and 
        fixes the divisor $D_0$ must preserve the functions $\xi_i$. 
        Since the functions $\xi'_i$ are definable, we can enlarge the divisor $D_0$ 
        so that for every $i$, there exists a definable partition $X_{i1},\ldots,X_{ir_i}$ of 
        $U_0 := \mathbb{P}^1 \smallsetminus D_0$,
        $(\xi'_i)_{|X_{it}}$ is of the form $\mathrm{val}(g_{it})$ where 
        $g_{it}$ is regular on $U_0$ and the set $X_{it}$ is given by the boolean combination 
        of sets of the form 
        $\{x \in U_0 | \mathrm{val}(a_{it})(x) \geq \mathrm{val}(b_{it})(x)\}$ 
        where $a_{it}$ and $b_{it}$ are regular on $U_0$.
        Let $\{c_1,\ldots,c_s\}$ be the finite family of regular functions on $U_0$ 
        consisting of the $g_{it}$'s and $a_{it}$'s and $b_{it}$'s as above. 
        It can be verified that any homotopy of $U_0$ that preserves the functions 
        $\mathrm{val}(c_j)$ preserves $\xi'_i$ for all $i$. 
        Lastly, 
        let $D_1$ be the union of $D_0$, $D$ and the zeros of the regular  
        functions $c_i$.
        We have that $\psi_{D_1}$ preserves the functions $c_i$. 
        Furthermore, $\psi_{D_1}$ is the cut-off $\psi_D[m_{D_1}]$ 
        where $m_{D_1}$ is as defined in \S \ref{homotopies of C}.
        Hence, we see that the cut-off $h[m_{D_1} \circ f]$ is the unique lift 
        of $\psi_{D_1} = \psi_D[m_{D_1}]$.
        
           We simplify notation and write $h_1 := h[m_{D_1} \circ f]$ 
           and let $H_1$ be its canonical extension.
           Let $\Upsilon_1$ denote the image of the deformation $H_1$ i.e. 
           $\{H_1(0,x) | x \in \widehat{C}\}$. 
           By construction, the function $\beta_{X,H}$ 
           is constant along the fibres of the retraction 
           $H_1(0,-) : \widehat{C} \to \Upsilon_1$. 
           We check that this implies $\beta_{X,H_1}$ is also
           constant along the fibres of the retraction.
           Let $x \in \Upsilon_1$. 
        Let $y_1,y_2 \in C$ such that $H_1(0,y_1) = H_1(0,y_2) = x$. 
        We must show that $\beta_{X,H_1}(y_1) = \beta_{X,H_1}(y_2)$. 
        Let $t_{y_1} := \mathrm{sup}\{t |H_1(t,y_1) \in \Upsilon_1\}$. 
         We have that $t_{y_1} = t_{y_2}$. 
         It suffices to verify this for the homotopy $\psi_{D_1}$
        and in this case it is a consequence of the fact that $m_{D_1}$ is 
        constant along the fibres of the retraction.
        Suppose, 
        $p_1(\beta_{X,H}(y_1)) \leq t_{y_1}$ then 
        $\beta_{X,H}(y_1) = \beta_{X,H_1}(y_1)$ and 
        likewise, $\beta_{X,H}(y_2) = \beta_{X,H_1}(y_2)$.
         Since $\beta_{X,H}$ is constant along the fibres of the retraction
         of $H_1$, we verify the claim in this case. 
        If 
        $p_1(\beta_{X,H}(y_1)) > t_{y_1}$
        then we get that 
        $\beta_{X,H_1}(y_1) = \beta_{X,H_1}(y_2) = (t_{y_1},\gamma_2)$ (or $= (t_{y_1},\gamma_1)$).  
         \end{proof} 
              
        \begin{rem} \label{extensions preserve radiality}
            \emph{Let $C$ be a $k$-curve and
     let $X \subset \widehat{C}$ 
       be a definably path connected definable subset. 
     Let 
     $H : [0,\infty] \times \widehat{C} \to \widehat{C}$ be a homotopy
     whose image $\Upsilon$ is a 
       $\Gamma$-internal subset of $\widehat{C}$ 
       such that $X \in \mathfrak{A}(H)$ and the 
       function $\beta_{X,H}$ (as in Proposition \ref{classification of definable sets})
        is constant along the fibres of the 
       retraction. Let us assume that $H$ is the canonical extension of a homotopy 
       $h : [0,\infty] \times C \to \widehat{C}$. 
       Let $r : C \to [0,\infty]$ be a $v+g$-continuous function such that 
       the image of the cut-off homotopy 
       $h[r]$ is $\Gamma$-internal as well. 
       The arguments in the proof 
       of Theorem \ref{radiality of definable sets} 
      shows that
       $\beta_{X,H[r]}$ is constant along the fibres of the retraction where 
       $H[r]$ is the cut-off of $H$ via the function $r$. It is also the 
       canonical extension of $h[r]$. }

        \end{rem}       
              
   \subsection{Backward branching index} \label{backward branching index}    
     
         The homotopies $\psi_D$ on $\mathbb{P}^1$ respect the metric structure of
         the curve $\mathbb{P}_k^{1,\mathrm{an}}$ as
         observed in Remark \ref{metric deformations}. 
         However, lifts
         $$h : [0,\infty] \times C \to \widehat{C}$$ 
                   of $\psi_D$ for a morphism $f : C \to \mathbb{P}^1$
             no longer satisfy this property. 
        Associated to $h$ and the morphism $f$, for every $x \in C$, 
        we introduce definable functions 
        $$BB_{h,f}(x) : [0,\infty] \to \mathbb{N} \times [0,\infty]$$ 
        \footnote{If $X$ is a definable set then a definable function $a : X \to \mathbb{N}$
         is a map with finite image and for every $n \in \mathbb{N}$, $a^{-1}(n)$ is a definable subset of $X$.}
         which will later help reconcile
        the homotopy $H$ with the
         metric properties of 
        $C^{\mathrm{an}}$.
       
     We define the Backward branching index function more generally 
        for any finite morphism $g : C' \to C$ of smooth projective irreducible curves. 
       Let $h : [0,\infty] \times C \to \widehat{C}$ be a deformation retraction
        of $C$ onto a $\Gamma$-internal set $\Sigma$. 
        Let $h' : [0,\infty] \times C' \to \widehat{C'}$ be a lift 
        of $h$ and let $\Sigma'$ denote the
        $\Gamma$-internal set which is the
         image of $h'$. 
         We will assume that $h$ is obtained as in 
         \S \ref{homotopies of C} by lifting a homotopy 
         $\psi_D$ on $\mathbb{P}^1$.
          
          \begin{defi} \label{BB index}
          The backward branching index of a point $x \in C'$ \emph{is a function}
           $$BB_{h',g}(x) : [0,\infty] \to \mathbb{N} \times [0,\infty]$$
         \emph{defined as follows. 
         If $t \in [0,\infty]$, let $B_{h',g}(x)(t)$ denote the set of 
          elements $z \in C'$ such that 
        $g(z) = g(x)$, 
        $h'(t,z) = h'(t,x)$ and $h'(t,z) \neq h'(0,z)$.
        Then 
        \begin{align*}
        BB_{h',g}(x)(t) := (\mathrm{card}(B_{h',g}(x)(t)), \Sigma_{z \in B_{h',g}(x)(t)} \mathrm{val}(z)).
        \end{align*}
        By Remark \ref{definability of backward branching index}, 
        we
        thus have a function 
        $$BB_{h',g} : C' \to \mathrm{Fn}([0,\infty],\mathbb{N} \times [0,\infty])$$
         where  $\mathrm{Fn}([0,\infty],\mathbb{N} \times [0,\infty])$ is the set of definable
        functions $[0,\infty] \to \mathbb{N} \times [0,\infty]$.}
        \end{defi} 
        \begin{rem} \label{definability of backward branching index}
    \emph{The finiteness of $g$ implies that 
        the $BB_{h',g}(x)$ takes on 
        only finitely many values. 
        It follows that there exists $n_x \in \mathbb{N}$ such that 
        the tuple $$S_x := ((s_0,e_0,\beta_0),\ldots,(s_{n_x},e_{n_x},\beta_{n_x})) \subset ([0,\infty] \times \mathbb{N} \times [0,\infty])^{n_x}$$ 
        with $s_0 = \infty$, completely determines 
        $BB_{h',g}(x)$. 
        By this we mean that if 
        $s \in [s_i,s_{i+1})$ then 
        $BB_{h',g}(x)(s) = (e_i,\beta_i)$ and 
        if $s \in [s_{n_x},0]$ then 
        $BB_{h',g}(x)(s) = (e_{n_x},\beta_{n_x})$
          The $s_i$ are the break points of the function i.e. $e_i \neq e_{i+1}$.}
          \end{rem} 
         
 \begin{thm} \label{backward branching is constant along retractions}
 Let $g : C' \to C$ be a finite morphism of smooth projective irreducible curves. 
       Let $h : [0,\infty] \times C \to \widehat{C}$ be a deformation retraction
        of $C$ onto a $\Gamma$-internal set $\Sigma$. 
        Let $h' : [0,\infty] \times C' \to \widehat{C'}$ be a lift 
        of $h$ and let $\Sigma'$ denote the
        $\Gamma$-internal set which is the
         image of $h'$. 
         We will assume that $h$ is obtained as in 
         \S \ref{homotopies of C} by lifting a homotopy 
         $\psi_D$ on $\mathbb{P}^1$.
There exists a $v+g$-continuous function 
$\alpha : C' \to \Gamma_\infty$ such that
if $h'_1$ denotes the cut-off 
$h'[\alpha]$ then 
the function $BB_{h'_1,g}$ is constant 
 along the fibres of the retraction associated to $h'_1$.
  \end{thm}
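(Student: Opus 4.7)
The plan is to encode the data underlying $BB_{h',g}$ as a finite collection of definable $\Gamma_\infty$-valued functions on $C'$ and then to invoke the Hrushovski--Loeser finiteness result (\cite[Lemma 10.2.3]{HL}, exactly as in the proof of Theorem \ref{radiality of definable sets}) to produce a cut-off $\alpha$ whose induced retraction preserves this data. Constancy of the defining functions along the retraction fibres will then translate into constancy of $BB_{h'_1,g}$.

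First I would analyse the combinatorial structure of $BB_{h',g}(x)$. Since $g$ is finite, the set $g^{-1}(g(x))$ has cardinality bounded by $\deg(g)$, and by Remark \ref{definability of backward branching index} each $BB_{h',g}(x)$ is piecewise constant with only finitely many break points. These break points lie in the finite set
\[
\{t^*_z(x) : z \in g^{-1}(g(x))\} \cup \{e(z) : z \in g^{-1}(g(x))\},
\]
where $t^*_z(x) := \sup\{t \in [0,\infty] : h'(t,z) = h'(t,x)\}$ and $e(z)$ is the departure time introduced in \S\ref{radiality}, while the values on each piece are determined by the active subset of preimages and the sum of their valuations. By choosing a definable enumeration of the fibres of $g$ (or, to avoid ordering issues, by passing to elementary symmetric functions), I package these $\Gamma_\infty$-valued quantities into a finite family $\{\xi_1, \ldots, \xi_N\}$ of $v+g$-continuous definable functions $C' \to \Gamma_\infty$ whose joint values determine $BB_{h',g}(x)$.

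Next I would follow the strategy of the proof of Theorem \ref{radiality of definable sets} and apply \cite[Lemma 10.2.3]{HL} to the family $\{\xi_i\}$ together with the finite morphism $f \circ g : C' \to \mathbb{P}^1$ underlying the construction of $h'$. This yields a divisor $D_1 \supset D$ in $\mathbb{P}^1$ such that $\psi_{D_1}$ preserves suitable pushforwards of the $\xi_i$. Setting $\alpha := m_{D_1} \circ f \circ g$, the cut-off $h'_1 := h'[\alpha]$ is the unique lift of $\psi_{D_1}$, is itself a cut-off of $h'$ by a $v+g$-continuous function, and by construction each $\xi_i$ is constant along the fibres of the retraction $h'_1(0,-)$.

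The last step is to verify that constancy of the $\xi_i$'s forces constancy of $BB_{h'_1, g}$ along those fibres. Here one must compare $BB_{h'_1,g}$ carefully with $BB_{h',g}$: for $t > \alpha(x)$ the cut-off is inactive and the merging structure is read off directly from the $\xi_i$, while for $t \leq \alpha(x)$ the index $BB_{h'_1,g}(x)(t)$ stabilises at the configuration reached at time $\alpha(x)$, which is again dictated by the $\xi_i$'s. The principal obstacle is the first step: packaging the multivalued data $\{(t^*_z(x), e(z), \mathrm{val}(z))\}_{z \in g^{-1}(g(x))}$ as a genuine finite family of $v+g$-continuous definable functions on $C'$ whose preservation under cut-off really does suffice for the preservation of $BB_{h'_1,g}$. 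Once this bookkeeping is in place, the construction of $\alpha$ reduces to the Hrushovski--Loeser finiteness already exploited in \S\ref{radiality}.
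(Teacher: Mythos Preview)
Your proposal is correct and follows the same strategy as the paper: encode $BB_{h',g}$ by finitely many definable $\Gamma_\infty$-valued functions on $C'$, invoke the argument of Theorem \ref{radiality of definable sets} (hence \cite[Lemma 10.2.3]{HL}) to produce the cut-off $\alpha$, and then check that constancy of $BB_{h',g}$ along the retraction of $h'_1 = h'[\alpha]$ forces constancy of $BB_{h'_1,g}$.

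Two remarks on the details. First, you do not need the $\xi_i$ to be $v{+}g$-continuous; mere definability suffices for the finiteness step, and functions like $x \mapsto n_x$ are integer-valued and will not be continuous. Second, the paper's encoding avoids the symmetrisation issue you flag: rather than recording the unlabelled multiset $\{(t^*_z(x), e(z), \mathrm{val}(z))\}_{z}$, it stratifies $C'$ by the number of break points $n_x$ (encoded via a map to a finite subset $\{\gamma_1,\ldots,\gamma_N\}\subset\Gamma$) and on each stratum records the tuple $S_x = ((s_j,e_j,\beta_j))_j$ directly. Since the $s_j$ are already canonically ordered as break points, no choice of enumeration of the fibre is needed. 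Your final comparison between $BB_{h'_1,g}$ and $BB_{h',g}$ via the cut-off time is exactly what the paper does in its last paragraph.
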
         
 \begin{proof} We begin by showing that 
       $h'_1$ can be chosen so that 
       $BB_{h',g}$ is constant    
       along the retraction $h'_1(0,-)$.
   Observe that 
   $h'$ is a lift of a homotopy $\psi_D$ on $\mathbb{P}^1$.
    Using the arguments in the proof of
    Theorem \ref{radiality of definable sets}, 
     we observe that it suffices to show that there 
     exists finitely many definable functions 
     $\{\xi_1,\ldots,\xi_m\}$ such that 
     if there exists a $v+g$-continuous function 
     $\alpha : C' \to \Gamma_\infty$ and $\xi_i$ are constant along the 
     retraction associated to
     $h'[\alpha]$ then $h'[\alpha]$ preserves 
     $BB_{h',g}$.
         Let $N \in \mathbb{N}$ be the supremum of the 
         set $\{\mathrm{card}\{g^{-1}(y)\} | y \in C\}$. 
         By definition,
         $n_x \leq N$ for every $x \in C$. 
     Recall that if $x \in C$ then
         $$S_x := ((s_0,e_0,\beta_0),\ldots,(s_{n_x},e_{n_x},\beta_{n_x}))$$ is the tuple 
         from Remark \ref{definability of backward branching index}
         that determines $BB_{h',g}(x)$. 
         Observe that the $s_i$ are distinct and correspond to the set of
         $t \in [0,\infty]$ such that there exists $z \in C$, $h(t,x) = h(t,z)$,
         $g(t) = g(z)$ and $h(t,z) \neq h(0,z)$. 
        It follows that $\{s_1,\ldots,s_{n_{x}}\}_{x \in C}$ varies
         definably along $C$
         and hence $x \mapsto n_x$ is a definable function. 
                  Let $\gamma_1,\ldots,\gamma_N$ be $N$ distinct points in $\Gamma(k)$. 
      We then have that the function $\xi$ given by $x \mapsto \gamma_{n_x}$ is 
         definable.  
       
         
         Once again let 
         $$S_x := ((s_0,e_0,\beta_0),\ldots,(s_{n_x},e_{n_x},\beta_{n_x}))$$
         be the tuple
          that 
         determines $BB_{h',g}(x)$.
         Let $\gamma < 0 \in \Gamma(k)$. 
              For every 
     $1 \leq i \leq N$, we define 
     a family $\{\xi_{i1},\ldots,\xi_{ii}\}$ 
    where for every 
    $x \in \xi^{-1}(\gamma_i)$, 
    $\xi_{ij}(x) := s_j$ 
     and if $x \notin \xi^{-1}(\gamma_i)$ then 
    $\xi_{ij}(x) := \gamma$. Since $\{BB_{h',g}(x)\}_{x \in C'}$ is uniformly definable in $x$, we get that 
    $\{S_x\}_{x \in C'}$ is also uniformly definable and hence 
    the $\xi_{ij}$ are definable functions. 
    
         We now encode the integers $e_i$ that appear in the tuple $S_x$ 
         using definable functions. 
         As before, 
          we see 
         that 
         for every 
         $x \in C'$ if 
         $S_x = ((s_0,e_0,\beta_0),\ldots,(s_{n_x},e_{n_x},\beta_{n_x}))$ then for every $l$, 
         $e_l \leq N$. 
         Let $\theta_1 < \ldots < \theta_N$ be elements of $\Gamma$. 
         For every $1 \leq i \leq N$, we define a family 
         $\{\eta_{i1},\ldots,\eta_{ii}\}$ of functions from $C'$ to $\Gamma_\infty$
         as follows.
         If $x \in \xi^{-1}(\gamma_i)$ then we set 
         $\eta_{ij}(x) := \theta_{e_j}$ and 
         if $x \notin \xi^{-1}(\gamma_i)$ then we set 
         $\eta_{ij}(x) := \gamma$. 
         
         Similarly, for every 
     $1 \leq i \leq N$, we define 
     a family $\{\kappa_{i1},\ldots,\kappa_{ii}\}$ 
    where for every 
    $x \in \xi^{-1}(\gamma_i)$, 
    $\kappa_{ij}(x) := \beta_j$ 
     and if $x \notin \xi^{-1}(\gamma_i)$ then 
    $\kappa_{ij}(x) := \gamma$. As before, the definability of 
    $\kappa_{ij}$ follows from the fact $\{S_x\}_{x \in C'}$ is uniformly definable.

         Clearly, any homotopy that fixes 
         the definable functions $\xi_{ij}$, $\eta_{ij}$ and 
         $\xi$ must preserve $BB_{h',g}$. 
         Hence we see that 
         there exists a $v+g$-continuous function 
     $\alpha : C' \to \Gamma_\infty$ such that the 
     retraction associated to
       $h'_1 := h'[\alpha]$ preserves 
     $BB_{h',g}$.
          
          Let $x \in \Sigma_1$. 
        Let $y_1,y_2 \in C$ such that $h'_1(0,y_1) = h'_1(0,y_2) = x$. 
        We must show that $BB_{h'_1,g}(y_1) = BB_{h'_1,g}(y_2)$.   
        Let $t_{y_1} := \mathrm{sup}\{t |h'_1(t,y_1) \in \Sigma_1\}$. 
       As in Lemma \ref{radiality of definable sets}, we have that $t_{y_1} = t_{y_2}$. 
       Since $h'_1$ is a cut-off of $h'$,
      for every $t > t_{y_1}$, 
      $BB_{h'_1,g}(y_1)(t) = BB_{h',g}(y_1)(t)$ and 
      by construction of $h'_1$, we get 
        that $BB_{h',g}(y_1)(t) = BB_{h',g}(y_2)(t)$ for $t \geq t_{y_1} = t_{y_2}$.
        This shows that 
        $BB_{h'_1,g}(y_1)_{|(t_{y_1},\infty]} = BB_{h'_1,g}(y_1)_{|(t_{y_2},\infty]}$.
        Since we can choose $\epsilon > 0$ such that  
        $$BB_{h'_1,g}(y_1)(t_{y_1}) = BB_{h'_1,g}(y_1)(t_{y_1} + \epsilon)$$
        and 
        $$BB_{h'_1,g}(y_2)(t_{y_2}) = BB_{h'_1,g}(y_2)(t_{y_2} + \epsilon),$$ 
        we conclude that 
        $$BB_{h'_1,g}(y_1) = BB_{h'_1,g}(y_1).$$
      \end{proof} 
   
         \begin{rem} \label{treating multiple morphisms}
   \emph{Let $f_1 : C' \to \mathbb{P}^1$ and $f_2 : C \to \mathbb{P}^1$ be two finite morphisms between 
   smooth projective irreducible $k$-curves. 
   Let $h_1 : [0,\infty] \times C' \to \widehat{C'}$ and 
   $h_2 : [0,\infty] \times C \to \widehat{C}$ be two homotopies whose images 
   are $\Gamma$-internal and which are the lifts of homotopies
   $\psi_{D_1} : [0,\infty] \times \mathbb{P}^1 \to \widehat{\mathbb{P}^1}$ 
   and $\psi_{D_2} : [0,\infty] \times \mathbb{P}^1 \to \widehat{\mathbb{P}^1}$ respectively. 
   By Theorem \ref{backward branching is constant along retractions}, there exists 
   $v+g$-continuous functions $\alpha_1 : C' \to [0,\infty]$ and 
   $\alpha_2 : C \to [0,\infty]$ such that 
   the cut-offs $h_1[\alpha_1]$ and $h_2[\alpha_2]$ preserve the 
   functions $BB_{h_1,f_1}$ and $BB_{h_2,f_2}$ respectively. 
   The proof of Theorem \ref{backward branching is constant along retractions}
   uses arguments outlined in the proof of Theorem \ref{radiality of definable sets}, 
    from which we see that 
    the functions $\alpha_1$ and $\alpha_2$ are the pull backs of 
   $v+g$-continuous functions 
   $\beta_1 : \mathbb{P}^1 \to [0,\infty]$ and 
   $\beta_2 : \mathbb{P}^1 \to [0,\infty]$ via 
   $f_1$ and $f_2$ respectively. 
   Let 
   $\beta : \mathbb{P}^1 \to [0,\infty]$ be the function 
   given by $x \mapsto \mathrm{max}\{\beta_1(x),\beta_2(x),m_{D_1},m_{D_2}\}$.
   The function $\beta$ is $v+g$-continuous since it can be seen as the composition of $v+g$-continuous functions. 
   One checks using the arguments in the proof of 
   Theorem \ref{backward branching is constant along retractions} that not only do
   $\psi_{D_1}[\beta]$ lift to $h_1[\beta \circ f_1]$ and 
   $\psi_{D_2}[\beta]$ lift to $h_2[\beta \circ f_2]$ but more importantly that
   $h_1[\beta \circ f_1]$ and
   $h_2[\beta \circ f_2]$ respect the functions 
   $BB_{h_1,f_1}$ and $BB_{h_2,f_2}$ 
   respectively.}  
      \end{rem}

\section{Profile functions}

\subsection{Homotopies of $C^{\mathrm{an}}$}  \label{homotopies of C^{an}}
  
     Let $f : C \to \mathbb{P}^1$ be a finite morphism. 
       In \S \ref{definable subsets of C}, we discussed the 
       construction of definable deformation retractions on 
       $\widehat{C}$ via the morphism $f$ 
       and a suitable homotopy 
       $\psi_D : [0,\infty] \times \mathbb{P}^1 \to \widehat{\mathbb{P}^1}$.
       We now interpret these constructions in the analytic setting. 
        
       The homotopy $\psi_D$ induces a homotopy 
       $\psi^{\mathrm{an}}_D : [0,\infty] \times \mathbb{P}^{1,\mathrm{an}} \to \mathbb{P}^{1,\mathrm{an}}$
       whose image is the closed subspace $\Sigma_D := \pi_{k,\mathbb{P}^1}(\Upsilon_D)$ where 
       $\pi_{k,\mathbb{P}^1} : \widehat{\mathbb{P}^1}(k^{max}) \to \mathbb{P}^{1,\mathrm{an}}$ and 
       $\Upsilon_D$ is the convex hull in $\widehat{\mathbb{P}^1}$ of the closed subset $D$. 
       If $D$ is $k$-definable then one checks that for every $x,y \in D$, 
       the path $[x,y]$ in $\widehat{\mathbb{P}^1}$ is $k$-definable. 
       We can describe these paths explicitly and one deduces that
       $\Sigma_D$ is the convex hull in $\mathbb{P}^{1,\mathrm{an}}$ of the closed subset 
       $D$. 
       Recall that $\psi_D = \psi[m_D]$ and we describe $\psi^{\mathrm{an}}$ explicitly as follows. 
       As before we choose a system of coordinates on $\mathbb{P}^1$.  
       The standard metric $m$ on $\mathbb{P}^1$ is defined 
       by identifying $\mathbb{P}^1$ with the glueing of the closed unit ball around $0$ 
       and the closed unit ball around $\infty$ along the 
       annulus $\{x \in \mathbb{P}^1 | T(x) = 1\}$ where $T$ is the 
       coordinate chosen \footnote{See \cite[Proposition 2.7]{WE1} for a generalization of this construction.}. 
       The homotopy $\psi^{\mathrm{an}}$ respects these two copies of the closed disk
       and hence it suffices to describe its restriction to the closed unit disk around $0$.  
           
       Let $B := \{x \in \mathbb{A}^{1,\mathrm{an}}| |T(x)| \leq 1 \}$ be
        the Berkovich closed unit disk around the origin.     
       Let $\zeta_{a,r}$ be a point of type I,II or III with $a \in B(k)$ and $r \in [0,1]$. 
       We have that 
       $\psi^{\mathrm{an}}(t,\zeta_{a,r}) := \zeta_{a,\mathrm{max}\{\mathrm{exp}(-t),r\}}$.
       Since $\psi_D$ is by definition the cut-off $\psi[m_D]$, we deduce that 
       $\psi^{\mathrm{an}}_D = \psi^{\mathrm{an}}[m_D]$ where 
       $\psi^{\mathrm{an}}[m_D]$ is defined as for definable homotopies
       i.e. $(t,x) \mapsto \psi^{\mathrm{an}}(\mathrm{max}(t,m_D(x)),x)$.
        
       Let $h : [0,\infty] \times C \to \widehat{C}$ be a homotopy that is 
       the unique lift of a homotopy $\psi_D : [0,\infty] \times \mathbb{P}^1 \to \widehat{\mathbb{P}^1}$.  
       The results in \cite[\S 14.1]{HL} show that the canonical extension $H$ of $h$ which is a homotopy
       on $\widehat{C}$ descends to a homotopy 
       $C^{\mathrm{an}}$ via the map $\pi_{k,C}$. Since 
       the homotopies of $\widehat{C}$ were all unique lifts of the homotopies on $\widehat{\mathbb{P}^1}$,
       the induced homotopies on $C^{\mathrm{an}}$ will be the unique lifts of homotopies on 
       $\mathbb{P}^{1,\mathrm{an}}$ i.e. we have the following commutative diagram. 
       
       $$
 \begin{tikzcd} [row sep = large, column sep = large] 
I \times C^{\mathrm{an}} \arrow[r,"h^{\mathrm{an}}"] \arrow[d, "id \times f^{\mathrm{an}}"] &
 {C}^{\mathrm{an}} \arrow[d, "{f}^{\mathrm{an}}"] \\
I  \times \mathbb{P}^{1,\mathrm{an}} \arrow[r,"{\psi^{\mathrm{an}}_D}"] &
 \mathbb{P}^{1,\mathrm{an}} 
\end{tikzcd}
$$   
     where $I = [0,\infty]$. 
    
We are grateful to Jérôme Poineau for the proof of the following lemma.
      
\begin{lem} \label{fibres lemma}
    Recall that we 
have a map $\pi_{k,C} : \widehat{C}(k^{max}) \to C^{\mathrm{an}}$. 
Let $x \in C^{\mathrm{an}}$ be a point of type I or II. Then 
$\pi_{k,C}^{-1}(x)$ consists of precisely one point. 
If $x \in C^{\mathrm{an}}$ is of type III then $\pi_{k,C}^{-1}(x)$ is homeomorphic to an annulus
in $\mathbb{A}^{1,\mathrm{an}}_{k^{max}}$
whose skeleton consists of a single point. 
If $x$ is of type IV then $\pi_{k,C}^{-1}(x)$ is homeomorphic to a Berkovich closed disk 
contained in $\mathbb{A}^{1,\mathrm{an}}_{k^{max}}$. 
\end{lem}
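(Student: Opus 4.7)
The plan is to factor $\pi_{k,C}$ as the canonical homeomorphism $\pi : \widehat{C}(k^{max}) \xrightarrow{\sim} C^{\mathrm{an}}_{k^{max}}$ followed by the base-change projection $p : C^{\mathrm{an}}_{k^{max}} \to C^{\mathrm{an}}$, so that $\pi_{k,C}^{-1}(x) = p^{-1}(x)$. Since $C$ is smooth and the claim is local at $x$, I would choose an \'etale local parameter $T$ near a lift of $x$, reducing to the case $C = \mathbb{A}^1$. Under this identification, points of $C^{\mathrm{an}}_{k^{max}}$ and $C^{\mathrm{an}}$ are multiplicative seminorms on $k^{max}[T]$ and $k[T]$ respectively, and $p$ is restriction; the task becomes describing, in each case, the set of extensions of the seminorm corresponding to $x$.

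For $x$ of type I, it is evaluation at a rigid point $a \in k \subset k^{max}$, whose only multiplicative-seminorm extension is evaluation at $a$. For $x = \zeta_{a,r}$ of type II ($r \in |k^*|$), a candidate fibre point $y = \zeta_{b,s}$ must satisfy $\|T - a'\|_y = \|T - a'\|_x$ for every $a' \in k$ on the $r$-sphere around $a$; choosing $\pi \in k$ with $|\pi| = r$, this translates into the residue class of $(b-a)/\pi$ in $\widetilde{k^{max}}$ differing from every element of $\tilde{k}^*$. Invoking that the standard $k^{max}$ satisfies $\widetilde{k^{max}} = \tilde{k}$, this is impossible unless $|b - a| < r$ and $s = r$, pinning down $y = \zeta_{a,r}$. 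For type III with $r \notin |k^*|$, no $a' \in k$ lies on the $r$-sphere, so the only surviving constraint is $|T - a|_y = r$; the fibre is the degenerate annulus $\{y \in \mathbb{A}^{1,\mathrm{an}}_{k^{max}} : |T - a|_y = r\}$---the preimage of $\zeta_{a,r}$ (now a type II point over $k^{max}$, since $r \in |k^{max,*}| = \mathbb{R}^{>0}$) under the standard retraction onto the arc $[\zeta_{a,0}, \zeta_{a,\infty}]$---whose skeleton consists of the single point $\zeta_{a,r}$.

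For type IV, $x$ is defined by nested $k$-balls $B_1 \supset B_2 \supset \cdots$ with $\bigcap_i B_i \cap k = \emptyset$. Maximal completeness of $k^{max}$ ensures that $B := \bigcap_i B_i^{k^{max}}$ is a nonempty closed ball of radius $r := \lim_i r_i > 0$ in $k^{max}$, and I would show the fibre equals the closed Berkovich disk $B^{\mathrm{an}}_{k^{max}} \subset \mathbb{A}^{1,\mathrm{an}}_{k^{max}}$. This is the step I expect to be the main obstacle. The inclusion $B^{\mathrm{an}}_{k^{max}} \subseteq p^{-1}(x)$ rests on $B \cap k = \emptyset$: every $k$-root of any $P \in k[T]$ lies at distance strictly greater than $r$ from every point of $B$, so by the strict triangle inequality and multiplicativity of $|\cdot|_y$, $|P|_y$ is constant on $y \in B^{\mathrm{an}}_{k^{max}}$ and equals $|P|_x$. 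For the reverse inclusion, I would approximate $r$ from above by $|a_i - z_0|$ with $a_i \in B_i \cap k$ and any reference $z_0 \in B$, and use the resulting equations $\max(|b - a_i|, s) = |a_i - z_0|$ to force a candidate $y = \zeta_{b,s}$ into $B^{\mathrm{an}}_{k^{max}}$.
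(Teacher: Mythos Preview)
Your proposal is correct and shares the paper's opening move---factoring $\pi_{k,C}$ through the homeomorphism $\widehat{C}(k^{max})\simeq C^{\mathrm{an}}_{k^{max}}$ and the base-change projection $p$---but thereafter takes a more hands-on route. The paper invokes structural results from the literature: for type II it cites Poineau--Pulita to decompose $p^{-1}(x)$ as a type II point together with a disjoint union of open balls, and then rules out the balls by the residue-field argument $\widetilde{k^{max}}=\tilde{k}$ (the same fact you use, but applied to a hypothetical $k^{max}$-rational point of such a ball rather than to a direct seminorm computation); for types III and IV it quotes Ducros' local structure theorem to obtain an annulus or disk neighbourhood of $x$ and then reads off the fibre after base change. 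You instead reduce to $\mathbb{A}^1$ via an \'etale coordinate and compute directly with seminorms $\zeta_{b,s}$, exploiting that $k^{max}$ is spherically complete (so no type IV points arise over $k^{max}$), that $\widetilde{k^{max}}=\tilde{k}$, and that $|k^{max,*}|=\mathbb{R}_{>0}$.

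Your argument is more elementary and self-contained, at the cost of spelling out the seminorm bookkeeping; the paper's is shorter but leans on cited black boxes. One small remark on your type IV reverse inclusion: the cleanest way to force $\zeta_{b,s}\in B^{\mathrm{an}}_{k^{max}}$ is simply to evaluate at $a'=a_i$ for each $i$, giving $\max(|b-a_i|,s)=|T-a_i|_x\le r_i$, hence $b\in\bigcap_i B_i^{k^{max}}=B$ and $s\le r$; your phrasing via $|a_i-z_0|$ is a slight detour. Also, your \'etale-coordinate reduction implicitly uses that \'etale morphisms of Berkovich curves are local isomorphisms; this is standard, but it plays the same role as the paper's appeal to the annulus/disk neighbourhood theorems.
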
      
\begin{proof} 
The map $\pi_{k,C}$ factors through 
 $\widehat{C}(k^{max}) \to C_{k^{max}}^{\mathrm{an}}$ and the projection
  $\pi_{k,C}^{\mathrm{an}} : C_{k^{max}}^{\mathrm{an}} \to C^{\mathrm{an}}$.   
By \cite[Lemma 14.1.1]{HL}, $\widehat{C}(k^{max}) \to C_{k^{max}}^{\mathrm{an}}$
 is a homeomorphism.
 Hence we restrict our attention to proving the analogous statements for 
 the morphism $\pi_{k,C}^{\mathrm{an}}$.
 When $x$ is of type I, it is clear that $(\pi_{k,C}^{\mathrm{an}})^{-1}(x)$ 
 is a single type I point as well. 
 When $x$ is of type II, we have by 
 \cite[Theorem 2.15 and Remark 2.29]{PPII} that 
 $(\pi_{k,C}^{\mathrm{an}})^{-1}(x)$ decomposes into a type II point 
 $x_{k^{max}}$ and the disjoint union of Berkovich open balls. 
  Let $O$ be one such open ball. Since the $k^{max}$-points 
  are 
  dense in $C_{k^{max}}^{\mathrm{an}}$, there exists $z \in O(k^{max})$. 
  We must have that $\widetilde{\mathcal{H}(x)} \hookrightarrow \widetilde{\mathcal{H}(z)}$. 
  However this is not possible since $\mathcal{H}(z) = k^{max}$ and $\widetilde{k^{max}} = \widetilde{k}$. 
  
  Let $x \in C^{\mathrm{an}}$ be a point of type III. By \cite[Theorem 4.3.5]{D-up}, there exists
   a neighbourhood $X \subset C^{\mathrm{an}}$ of $x$ such that $X$ is isomorphic to a 
   Berkovich open annulus. We identify $X$ with an annulus in 
   $\mathbb{A}^{1,\mathrm{an}}_k$. After choosing a suitable coordinate $T$ on $\mathbb{A}^1_k$, we can assume that
   $X = \{z \in \mathbb{A}_k^{1,\mathrm{an}}| r < |T(z)| < R\}$ and that
   $x$ is the unique point in $X$ such that 
   $|T(x)| = r'$ where $r' \in (r,R) \smallsetminus |k^*|$.
   We check that $(\pi_{k,C}^{\mathrm{an}})^{-1}(X) \simeq X_{k^{max}}$ and 
   see that $(\pi_{k,C}^{\mathrm{an}})^{-1}(x) = \{z \in X_{k^{max}} | |T(z)| = r'\}$.
   
    If $x$ is of type IV, we use a similar argument noting in this case that
    there exists a neighbourhood $X$ of $x$ that is isomorphic to a Berkovich closed disk
    in $\mathbb{A}^{1,\mathrm{an}}_k$.
\end{proof}    
  
 \begin{lem} \label{fibres lemma skeleta}
 Let $C$ be a smooth projective curve over $k$ and 
 $f : C \to \mathbb{P}^1_k$ be a finite morphism.  
 Let $D \subset \mathbb{P}_k^1(k)$ be a divisor and 
 let $\psi_D : [0,\infty] \times \mathbb{P}^1 \to \widehat{\mathbb{P}^1}$
 be a $k$-definable homotopy whose image $\Upsilon_D$ is $\Gamma$-internal.
 Let $h : [0,\infty] \times C \to \widehat{C}$ be a $k$-definable homotopy which 
 is a lift of $\psi_D$ and whose image is $\Upsilon' \subset \widehat{C}$. 
 Let $\Sigma' = \pi_{k,C}(\Upsilon')(k^{max})$. 
 We then have that ${\pi_{k,C}}_{|\Upsilon'(k^{max})} : \Upsilon'(k^{max}) \to \Sigma'$ is a homeomorphism.
 \end{lem}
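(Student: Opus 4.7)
The plan is to combine the fibre computation of Lemma \ref{fibres lemma} with the closedness of $\pi_{k,C}$. Since $\pi_{k,C}$ is continuous and closed (as recorded in the introduction), its restriction $\pi_{k,C}|_{\Upsilon'(k^{max})} : \Upsilon'(k^{max}) \to \Sigma'$ is continuous, closed, and surjective; a closed continuous bijection between Hausdorff spaces is a homeomorphism, so the task reduces to proving injectivity.

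I would first treat the target $\mathbb{P}^1$. A point of $\Upsilon_D(k^{max})$ is the generic type of a closed ball $B(a, r)$ with $a \in D \subset \mathbb{P}^1(k)$ and valuative radius in $\Gamma(k^{max}) = \mathbb{R}$; under the identification $\widehat{\mathbb{P}^1}(k^{max}) \simeq \mathbb{P}^{1,\mathrm{an}}_{k^{max}}$ it corresponds to the type I or II point $\zeta_{a,r}$ of the convex hull of $D$ in $\mathbb{P}^{1,\mathrm{an}}_{k^{max}}$. Its image under $\pi_{k,\mathbb{P}^1}$ is the Berkovich point $\zeta_{a,r} \in \mathbb{P}^{1,\mathrm{an}}$, which is of type I, II, or III according as $r = 0$, $r \in |k^*|$, or $r \notin |k^*|$. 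Distinct generic types in $\Upsilon_D(k^{max})$ define distinct closed balls in $\mathbb{P}^1_{k^{max}}$ and hence project to distinct Berkovich points, so $\pi_{k,\mathbb{P}^1}|_{\Upsilon_D(k^{max})}$ is injective onto $\Sigma_D$. Equivalently, even when the full fibre over a type III point is the annulus described by Lemma \ref{fibres lemma}, this annulus meets $\Upsilon_D(k^{max})$ only at the single type II point on its skeleton over $k^{max}$.

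To transport the result to $C$ I would use the uniqueness of the lift to write $\Upsilon' = \widehat{f}^{-1}(\Upsilon_D)$ and, via the commutative diagram of \S\ref{homotopies of C^{an}}, $\Sigma' = (f^{\mathrm{an}})^{-1}(\Sigma_D)$. If $y_1, y_2 \in \Upsilon'(k^{max})$ satisfy $\pi_{k,C}(y_1) = \pi_{k,C}(y_2) = x$, then the $\mathbb{P}^1$ case applied to $f^{\mathrm{an}}(x)$ forces $\widehat{f}(y_1) = \widehat{f}(y_2)$, and one must then identify the finite fibre of $\widehat{f}$ over this common image with the finite fibre of $f^{\mathrm{an}}$ over $x$. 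This last step is the main obstacle, particularly when $x$ is of type III: then $\pi_{k,C}^{-1}(x)$ is the positive-dimensional annulus of Lemma \ref{fibres lemma}, and one must argue that intersecting with $\Upsilon'(k^{max})$ cuts it down to a single point. The content of this step is that the preimage of the convex hull of $D$ under $f^{\mathrm{an}}_{k^{max}}$ is a skeleton in $C^{\mathrm{an}}_{k^{max}}$ that projects bijectively onto $\Sigma'$; this follows from the $\mathbb{P}^1$ statement combined with the facts that finite morphisms between smooth analytic curves preserve Berkovich types and that the local degrees at type II points are unchanged under the scalar extension $k \hookrightarrow k^{max}$.
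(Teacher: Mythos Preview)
Your overall strategy matches the paper: reduce to injectivity via closedness, settle the $\mathbb{P}^1$ case directly from the description of $\Upsilon_D$, and then lift along $f$. The paper likewise invokes \cite[Lemma 14.1.2]{HL} for closedness and treats $\Upsilon_D(k^{max})\to\Sigma_D$ first. (One small slip: when you write ``the finite fibre of $f^{\mathrm{an}}$ over $x$'' you mean over $f^{\mathrm{an}}(x)$.)

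The genuine divergence is in the type~III case. The paper argues purely topologically: it uses that $\Upsilon'(k^{max})$ is path connected and that $\widehat{C}(k^{max})$ is a tree, so if some $z\neq x'$ in the thin annulus $\pi_{k,C}^{-1}(x)$ lay on $\Upsilon'(k^{max})$, the unique path from $z$ to any point outside that fibre would have to pass through $x'$ and would contribute an entire segment of $\Upsilon'(k^{max})$ inside $\pi_{k,C}^{-1}(x)$, contradicting the finiteness of $\pi_{k,C}|_{\Upsilon'(k^{max})}$ already established from the $\mathbb{P}^1$ case. This argument needs nothing about $f$ beyond finiteness.

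Your route instead compares fibre cardinalities: identify $\Upsilon'(k^{max})$ over $p':=\pi_{k,\mathbb{P}^1}^{-1}(f^{\mathrm{an}}(x))\cap\Upsilon_D(k^{max})$ with $(f^{\mathrm{an}}_{k^{max}})^{-1}(p')$, produce one preimage $x_i'$ for each $x_i\in (f^{\mathrm{an}})^{-1}(f^{\mathrm{an}}(x))$, and then rule out extra preimages by a degree count. This is valid, but the last inference is doing real work that you have compressed into a phrase. What you actually need is either that local multiplicities at type~III points are invariant under the extension $k\hookrightarrow k^{max}$ (so that $\sum_i \deg_{x_i'} f_{k^{max}}=\sum_i \deg_{x_i} f=\deg f$ leaves no room for further points), or the structure theorem for finite morphisms of open annuli (after shrinking, $f$ near $x_i$ looks like $z\mapsto z^{d_i}$, hence the fibre over a skeleton point of the target annulus is the single skeleton point of the source). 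Either fact is standard, but you should cite or state it explicitly; ``local degrees at type~II points are unchanged'' is not quite the assertion being used, since $x_i$ is type~III over $k$. The paper's path-connectedness argument sidesteps this entirely, at the cost of being less informative about how $\Upsilon'(k^{max})$ sits inside $C^{\mathrm{an}}_{k^{max}}$.
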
 
  \begin{proof}
  Let $\Sigma := \pi_{k,\mathbb{P}^1}(\Upsilon_D)$. 
     We check using the explicit description of $\Upsilon_D$ that 
     the restriction 
     ${\pi_{k,\mathbb{P}^1}}_{|\Upsilon_D} : \Upsilon_D(k^{max}) \to \Sigma$ is bijective.
     By \cite[Lemma 14.1.2]{HL}, it is closed as well and hence a homeomorphism. 
  
By \cite[Lemma 14.1.2]{HL}
it suffices to show that 
  ${\pi_{k,C}}_{|\Upsilon'(k^{max})}$ is a bijection.
  Since $\widehat{f} : \Upsilon' \to \Upsilon_D$ is a finite map 
  and ${\pi_{k,\mathbb{P}^1}}_{|\Upsilon_D(k^{max})}$ is a bijection, we see 
  that 
  ${\pi_{k,C}}_{|\Upsilon'(k^{max})}$ is finite.
  
  We argue as in Lemma \ref{fibres lemma}.
  By loc.cit., we know that the map ${\pi_{k,C}}_{|\Upsilon'(k^{max})}$ 
  is bijective over points of type I and II. 
  Let $x \in \Sigma'$ be a point of type III 
  and let $X$ be a neighbourhood in $C^{\mathrm{an}}$ 
  which is homeomorphic to an open annulus in $\mathbb{A}^{1,\mathrm{an}}_k$.
  We identify $\Upsilon'(k^{max})$ with a subspace of 
  $C^{\mathrm{an}}_{k^{max}}$ via the homeomorphism 
  $\widehat{C}(k^{max}) \to C^{\mathrm{an}}_{k^{max}}$.
  Recall the map 
  $\pi_{k,C}^{\mathrm{an}} : C^{\mathrm{an}}_{k^{max}} \to C^{\mathrm{an}}$ and 
  observe that 
  $(\pi_{k,C}^{\mathrm{an}})^{-1}(X) \simeq X_{k^{max}}^{\mathrm{an}}$.  
  The map $X_{k^{max}} \cap \Upsilon'(k^{max}) \to X \cap \Sigma$ is finite.
  Suppose $z \mapsto x$. By Lemma \ref{fibres lemma}, the preimage of $x$ is an
  annulus whose skeleton is a single point $x'$. 
  Since $\Upsilon'(k^{max})$ is path connected, if $z \neq x'$ and $y$ is any other point in $\Upsilon'(k^{max})$ that doesn't map to $x$ then
   we must have a path 
  from $z$ to
  $y$. However, this path must necessarily pass through to $x'$ and contain an infinite number of preimages of $x$. 
  This is a contradiction and hence $x'$ is the unique point in the fibre over 
  $x$ that is contained in $\Upsilon'(k^{max})$.
  
  \end{proof}

 \subsection{Length of a definable path} \label{length of a definable path}
 
    Let $f : C' \to C$ be a finite separable morphism 
    of smooth projective connected curves. 
   Let $g : C \to \mathbb{P}^1_k$ be a finite separable morphism.  
  We deduce from \cite[Lemma 3.3]{WE2} that 
  there exists a divisor $D \subset \mathbb{P}^1$ such that
  if $\Sigma_{D}$ denotes the convex hull of $D$ in $\mathbb{P}^{1,\mathrm{an}}$ 
  then $\Sigma'_D := (g^{\mathrm{an}})^{-1}(\Sigma_D)$ and
  $\Sigma''_D := ((g\circ f)^{\mathrm{an}})^{-1}(\Sigma_D)$ are skeleta of $C^{\mathrm{an}}$ and $C'^{\mathrm{an}}$ respectively.
  We can enlarge $D$ if necessary to assume further that 
  the homotopy $\psi_D$ lifts uniquely via $f$ and $g \circ f$ to homotopies
  $h : [0,\infty] \times C \to \widehat{C}$
  and   $h' : [0,\infty] \times C' \to \widehat{C'}$
   with
   images $\Upsilon'_D = \widehat{f}^{-1}(\Upsilon_D)$
   and  $\Upsilon''_D = (\widehat{g \circ f})^{-1}(\Upsilon_D)$
    \footnote{One can 
   go through the Hrushovski-Loeser construction to show that there is actually no need to enlarge $D$.}.
  
   Let $x \in C'(k)$ 
   and $y := f(x)$. 
   Let $h'^{\mathrm{an}}_x : [0,\infty] \to C'^{\mathrm{an}}$ denote the
   path defined by 
   $t \mapsto h'^{\mathrm{an}}(t,x)$.
   Recall the isometric paths 
 $l_x : (0,\infty] \to C'^{\mathrm{an}}$ 
 and $l_y : (0,\infty] \to C^{\mathrm{an}}$
 from \S 1. 
 The map $f^{\mathrm{an}}$ induces a map 
 $f_x^{\mathrm{an}} = l_y^{-1} \circ f^{\mathrm{an}} \circ l_x : (0,\infty] \to (0,\infty]$.
 There exists a tuple $T_x := ((t_0 = \infty,d_0,\alpha_0),\ldots,(t_{m_x},d_{m_x},\alpha_{m_x})) \subset ([0,\infty] \times \mathbb{N} \times [0,\infty])^{m_x}$ 
  such that for every $i$, $f_x^{\mathrm{an}}$ restricted to 
 $(t_i,t_{i+1}]$ is the map $t \mapsto d_it + \alpha_i$.
 
  Suppose $x \notin \Sigma''_D$. 
  Let $O'$ be the connected component of $C'^{\mathrm{an}} \smallsetminus \Sigma''_D$ containing $x$ and 
  $O$ be the connected component of $C^{\mathrm{an}} \smallsetminus \Sigma'_D$ containing $y = f(x)$.
  We identify $O'$ and $O$ with the Berkovich open unit ball so that $x$ is the origin on $O'$ and 
  $y$ is the origin on $O$. 
  The function $f^{\mathrm{an}}$ restricts to a map $O' \to O$. 
  Let $F : O' \to \mathbb{R}_\infty$ be the function 
  given by
  $p \mapsto -\mathrm{log}|T(f(p))|$ where $T$ is the coordinate on $O$. 
  By Lemma \ref{piecewise affine and zeroes}, $F$ is piecewise affine 
  as defined in \cite[Definition 5.14]{BPR}.
  Using the notation from the previous paragraph, 
  observe that for every $i$, the integer $d_i$ 
  is the \emph{outgoing slope} (cf. \cite[Definition 5.14]{BPR}) of the piecewise affine function 
   $F$
    along the tangent direction $v$ towards the boundary of the 
    ball containing $x$. We denote this $d_vF$.

      In \S \ref{definable subsets of C}, we introduced the notion 
    of the backward branching index $BB_{h',f}$ associated to the homotopy 
    $h' : [0,\infty] \times C' \to \widehat{C'}$ and the morphism $f : C' \to C$. 
    Analogously, we define $BB_{h'^{\mathrm{an}},f} : C'(k) \to \mathrm{Fn}([0,\infty] \to \mathbb{N} \times [0,\infty])$ where 
    $\mathrm{Fn}([0,\infty] \to \mathbb{N} \times [0,\infty])$ denotes the set of definable functions from 
    $[0,\infty]$ to $\mathbb{N} \times [0,\infty]$. 
   Let $z \in C'(k)$ and $t \in [0,\infty]$. Let $B_{h'^{\mathrm{an}},g}(x)(t)$ denote the set of 
          elements $z \in C'$ such that 
        $g(z) = g(x)$ and 
        $h'^{\mathrm{an}}(t,z) = h'^{\mathrm{an}}(t,x)$.
        Then 
        \begin{align*}
        BB_{h'^{\mathrm{an}},g}(x)(t) := (\mathrm{card}(B_{h'^{\mathrm{an}},g}(x)(t)), \Sigma_{z \in B_{h'^{\mathrm{an}},g}(x)(t)} \mathrm{val}(z)).
        \end{align*}

    One checks using that if $z \in C'(k)$ then 
    $BB_{h',f}(z) = BB_{h'^{\mathrm{an}},f}(z)$.
    This follows from the following fact which can be
    deduced from Lemma \ref{fibres lemma}.  
    For every $z_1,z_2 \in C'(k)$ and $t \in [0,\infty](\mathbb{R})$, 
    $h'(t,z_1) = h'(t,z_2)$ if and only if 
    $h'^{\mathrm{an}}(t,z_1) = h'^{\mathrm{an}}(t,z_2)$. 
    Hence 
     $BB_{h'^{\mathrm{an}},f}(z) \in \mathrm{Fn}([0,\infty] \to \mathbb{N} \times [0,\infty])$.
       
     We now show that the backward branching index
     associated to the homotopy $h^{\mathrm{an}}$ 
     coincides with the notion of outgoing slopes.  
       
  \begin{lem} \label{slopes and backward branching index}
   We make use of the notation introduced above regarding the morphism $f : C' \to C$ and 
   the homotopy $h' : [0,\infty] \times C' \to \widehat{C'}$. 
   Let $x \in C'(k) \smallsetminus \Sigma''_D$ and $p = l_x(t)$ where $t \in [0,\infty]$. 
   Let $v$ be the tangent direction along $l_x$ towards $l_x(0)$. 
   We have that 
   $$\mathrm{card}(B_{h'^{\mathrm{an}},f}(x)(t)) = d_vF(p).$$
  \end{lem}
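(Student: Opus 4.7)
The plan is to compute both sides of the claimed identity explicitly in local analytic coordinates near $x$. A crucial preliminary observation is that, by the construction in \S \ref{length of a definable path}, the divisor $D \subset \mathbb{P}^1$ is chosen so that $\Sigma''_D = ((g \circ f)^{\mathrm{an}})^{-1}(\Sigma_D)$ is a genuine skeleton of $C'^{\mathrm{an}}$; in particular, it contains every ramification point of $f$. Consequently, the hypothesis $x \in C'(k) \smallsetminus \Sigma''_D$ forces $f$ to be unramified at $x$, and moreover every rigid $f$-preimage of $y := f(x)$ contained in the connected component $O' \subset C'^{\mathrm{an}} \smallsetminus \Sigma''_D$ is likewise unramified, so all local multiplicities of $f$ at such preimages equal $1$.

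I would identify $O'$ with the Berkovich open unit disk, placing $x$ at the origin, and likewise identify the image component $O \subset C^{\mathrm{an}} \smallsetminus \Sigma'_D$ containing $y$ with the open unit disk centred at $y$. The restriction $f|_{O'}$ is then given by a power series $g$ with $g(x) = 0$, whose zero set consists of the simple, distinct roots $a_1 = x, a_2, \ldots, a_m$. For rigid $z \in O'(k)$ and $t > 0$, the explicit description of $h'^{\mathrm{an}}$ on $O'$ gives $h'^{\mathrm{an}}(t, z) = \zeta_{z, e^{-t}}$, and the non-Archimedean triangle inequality yields $\zeta_{z, e^{-t}} = \zeta_{x, e^{-t}}$ if and only if $|z - x| \leq e^{-t}$. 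Combining this with $f(z) = y$, one finds that $B_{h'^{\mathrm{an}}, f}(x)(t)$ is the set $\{a_i : |a_i - x| \leq e^{-t}\}$, whose cardinality is $\#\{i : |a_i - x| \leq e^{-t}\}$.

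For $d_v F(p)$, one factors $g = u \cdot \prod_i (z - a_i)$ on any closed sub-disk of $O'$ containing all the $a_i$, with $u$ an analytic unit whose sup-norm is locally constant along $l_x$. The sup-norm formula then gives
\[
F(l_x(t)) = -\log |g|(\zeta_{x, e^{-t}}) = \mathrm{const} + \sum_i \min(t, \alpha_i),
\]
where $\alpha_i := -\log|a_i - x|$. Hence $F \circ l_x$ is piecewise affine in $t$, with a slope increment of exactly $+1$ at each breakpoint $t = \alpha_i$. Reading off the outgoing slope at $p = l_x(t_0)$ in the direction $v$ along $l_x$ toward $l_x(0)$, in the convention of \cite[Definition 5.14]{BPR}, one obtains $d_v F(p) = \#\{i : \alpha_i \geq t_0\} = \#\{i : |a_i - x| \leq e^{-t_0}\}$, which matches the previous computation.

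The main obstacle is the preliminary unramifiedness statement: without it, the piecewise affine function $F \circ l_x$ would acquire slope jumps equal to the multiplicities $n_i$ at the breakpoints, and $d_v F(p)$ would count preimages with multiplicity rather than as a set. Once one verifies that $\Sigma''_D$ contains the ramification locus of $f$ (so that all relevant $n_i$ equal $1$), both sides reduce to the same count of distinct preimages inside a closed ball around $x$.
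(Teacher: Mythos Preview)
Your computation of $d_vF(p)$ via Weierstrass factorization is correct and is essentially the content of the paper's Lemma~\ref{piecewise affine and zeroes} (the paper routes through harmonicity first, but arrives at the same zero count). Your unramifiedness observation is also correct and matches the paper's use of the fact that $D$ contains the images of the ramification loci.

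The genuine gap is in your computation of $\mathrm{card}(B_{h'^{\mathrm{an}},f}(x)(t))$. You assert that ``the explicit description of $h'^{\mathrm{an}}$ on $O'$ gives $h'^{\mathrm{an}}(t,z)=\zeta_{z,e^{-t}}$'', but no such description exists: $h'$ is \emph{defined} as the unique lift of $\psi_D$ along $g\circ f$, not as the standard radial flow on $O'$ under your chosen identification with the unit disk. Concretely, $h'^{\mathrm{an}}(t,z)$ is the unique point on $l_z$ mapping to $\psi_D^{\mathrm{an}}(t,(g\circ f)(z))$ under $(g\circ f)^{\mathrm{an}}$; its radius in the $O'$-coordinates is governed by the profile function of $g\circ f$ at $z$ and is in general not $e^{-t}$. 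Indeed, the discrepancy between the $h'$-parametrisation and the metric parametrisation $l_x$ is precisely what \S\ref{length of a definable path} sets out to analyse, so assuming they agree short-circuits the whole discussion.

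What you actually need is only the equivalence $h'^{\mathrm{an}}(t,z)=h'^{\mathrm{an}}(t,x)$ $\Leftrightarrow$ $z$ lies in the closed sub-ball of $O'$ with Gauss point $h'^{\mathrm{an}}(t,x)$, for $z$ an $f$-preimage of $y$. The paper obtains this via the lift property: since $f(z)=f(x)=y$, both $h'^{\mathrm{an}}(t,z)$ and $h'^{\mathrm{an}}(t,x)$ map under $f^{\mathrm{an}}$ to $h^{\mathrm{an}}(t,y)=q$; the path $h'^{\mathrm{an}}(-,z)$ runs along $l_z$, the map $f^{\mathrm{an}}$ is injective on $l_z$, and $p\in l_z$ with $f^{\mathrm{an}}(p)=q$, whence $h'^{\mathrm{an}}(t,z)=p$. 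Replacing your unjustified formula with this lifting argument repairs the proof; the remaining steps then go through as you wrote them.
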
  
  \begin{proof} 
   Let $y := f(x)$. Let $O'$ be the connected component of 
   $C'(k) \smallsetminus \Sigma''_D$ containing $x$
    and $O$ be the connected component of 
   $C(k) \smallsetminus \Sigma'_D$ containing $y = f(x)$. 
   We identify $O'$ and $O$ with the Berkovich open unit ball $O(0,1)$ 
   so $x$ and $y$ are the origins of the source and target respectively.
   We may hence identify the restriction of $f^{\mathrm{an}}$ with 
   an endomorphism of the Berkovich unit disk. 
   The function $F$ was defined to be
   $p \mapsto -\mathrm{log}|T(f(p))|$ where $T$ is the coordinate on the target. 
   Let 
   $q := f^{\mathrm{an}}(p)$. 
   Let $b_0$ be the germ of the outgoing path from $q$ to the origin of $O$ and 
   $b_{\infty}$ be the germ of the outgoing path from $q$ to the boundary of $O$.  
   Observe that $T$ is constant along every outgoing path different from $b_0$ and 
   $b_{\infty}$.  
   It follows that if $\mathcal{T}(p)$ is the tangent space at $p$ i.e. the 
   space of all outgoing paths from $p$ then  
   $$\Sigma_{t \in \mathcal{T}(p)} d_tF(p) = \Sigma_{w \in \mathcal{V}_0} d_wF(p) + \Sigma_{u \in \mathcal{V}_\infty} d_uF(p)$$
   where $\mathcal{V}_0$ is the set of outgoing paths from $p$ that map onto 
   $b_0$ and $\mathcal{V}_\infty$ is the set of outgoing paths from $p$ that map onto 
   $b_\infty$. By uniqueness of the lift, $\mathcal{V}_{\infty}$ consists of the single outgoing path 
   from $p$ to $\infty$. Observe that this is the same path $v$ in the statement of the lemma. 
   Since $F$ is harmonic, we see that 
   $$d_vF(p)  = -\Sigma_{w \in \mathcal{V}_0} d_wF(p).$$
   Let $p$ be of the form $\zeta_{0,r}$ where $r \in (0,1)$. 
   By Lemma \ref{piecewise affine and zeroes}, we get that
   $d_vF(p)$ is the sum of 
   the order of vanishing of $f$ at every zero in 
   the closed ball of radius $r$ around the origin i.e.
   $$d_vF(p)  = \Sigma_{z \in Z_f(r)}  \mathrm{ord}_z(f)$$
   where $Z_f(r)$ denotes the zeroes of $f$ in the closed ball 
   around $0$ of radius $r$. 
   
   To complete the proof, we show that
    $$\mathrm{card}(B_{h'^{\mathrm{an}},f}(x)(t)) = \Sigma_{z \in Z_f(r)} \mathrm{ord}_{z}(f).$$
    
    The homotopies $h'$ and $h$ are the unique lifts of a homotopy 
    $\psi_D$. The construction in \cite[\S 7.5]{HL}, 
    implies that $D$ must contain the image of the ramification loci 
    of both $g$ and $g \circ f$. 
    We deduce from this that $f$ is unramified outside $(g \circ f)^{-1}(D)$. 
    It follows that 
    for every $z \in Z_f(r)$, $\mathrm{ord}_{z}(f) = 1$. 
    If $z$ does not belong to the closed ball around $0$ of radius $r$ then 
    $h'(t,z) \neq p$.
    It follows that $B_{h'^{\mathrm{an}},f}(x)(t) \subset Z_f(r)$ and hence 
        \begin{align} \label{key}
    \mathrm{card}(B_{h'^{\mathrm{an}},f}(x)(t)) \leq \Sigma_{z \in Z_f(r)} \mathrm{ord}_{z}(f)
    \end{align}
    
    We show that if $z \in Z_f(r)$ then $h'^{\mathrm{an}}(t,z) = p$. 
   Since $h'^{\mathrm{an}}$ is a lift of the homotopy 
   $h^{\mathrm{an}}$ and $h^{\mathrm{an}}(t,0) = q$, we get that 
   $f^{\mathrm{an}}h'^{\mathrm{an}}(t,z) = q$. 
  Furthermore, we know that $f_{z}^{\mathrm{an}}$ maps the image of $l_z$ onto $l_0$ and hence 
  the lift starting from $z$ of the path $h^{\mathrm{an}}(-,0) : [0,\infty] \to C^{\mathrm{an}}$ must be along 
 $l_z$. 
 By assumption, $z$ lies in the closed ball around $0$ of radius $r$ and hence we get 
 that $p \in l_z$. 
 Since $f^{\mathrm{an}}$ is injective when restricted to the 
  image of $l_z$, we
 see that 
  $h'^{\mathrm{an}}(t,z) = p$.
  This proves that 
  $$\Sigma_{z \in Z_f(r)} \mathrm{ord}_{z}(f) \leq \mathrm{card}(B_{h'^{\mathrm{an}},f}(x)(t))$$
  and by (\ref{key}), we conclude the proof. 
    \end{proof} 

     Recall from \S 1 that we used $O(0,1)$ to denote the Berkovich open unit disk.  
       
 \begin{lem} \label{piecewise affine and zeroes}
    Let $f : O(0,1) \to O(0,1)$ be a clopen surjective analytic function
    that maps $0$ to $0$.
  Let
  $F$ be the function
   $p \mapsto -\mathrm{log}|T(f(p))|$ where $T$ is the coordinate on the target. 
   The function $F$ is piecewise affine and 
   harmonic (cf. \cite[Definition 5.14]{BPR}).
  
   Let 
   $r \in (0,1)$ and $p := \zeta_{0,r}$.  
  Let $v$ be
   an outgoing branch from $p$ that maps onto 
   the outgoing branch from $f(p)$ to the origin.
   We then have that the outgoing slope $d_vF(p)$
   is equal to the sum $\Sigma_{z \in Z_f(r,v)} -\mathrm{ord}_z(f)$ where 
$Z_f(r,v)$ denotes those elements $z \in Z_f(r)$
   that belong to 
  the open ball of radius $r$ that intersects $v$. 
 \end{lem}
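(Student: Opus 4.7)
The plan is to reduce the lemma to a product of linear factors via Weierstrass preparation, and then to the single-factor formula $|T - a|_{\zeta_{b, s}} = \mathrm{max}(|b - a|, s)$, from which the slope behaviour of $-\mathrm{log}|T - a|$ is completely explicit.

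First I would fix $r_0$ with $r < r_0 < 1$. The restriction of $f$ to the closed affinoid disk $B(0, r_0)$ lies in the Tate algebra $k\langle r_0^{-1} T\rangle$, so classical Weierstrass preparation yields a factorization $f|_{B(0, r_0)} = u \cdot P$, where $P(T) = \prod_i (T - a_i)^{m_i}$ is a polynomial whose roots (with multiplicities $m_i = \mathrm{ord}_{a_i}(f)$) are exactly the zeros of $f$ in $B(0, r_0)$, and $u$ is a unit of constant seminorm on $B(0, r_0)$. Consequently
\[
F(q) = -\mathrm{log}|u|_q + \sum_i m_i \bigl(-\mathrm{log}|T - a_i|_q\bigr)
\]
on $B(0, r_0)$, with the first summand a constant. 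The identity $|T - a|_{\zeta_{b, s}} = \mathrm{max}(|b - a|, s)$ makes $-\mathrm{log}|T - a|$ visibly piecewise affine on the Berkovich tree (equal to $-\mathrm{log}(s)$ along the path from $a$ outward, and constant along every branch not containing $a$) and harmonic away from $a$. Combining these facts, $F$ is piecewise affine and harmonic on $B(0, r_0) \smallsetminus Z_f(r_0)$; letting $r_0 \nearrow 1$ yields the structural assertion of the lemma.

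For the slope formula at $p = \zeta_{0, r}$, I would use the additivity of $d_v$ on piecewise affine functions together with the vanishing of $d_v(-\mathrm{log}|u|)(p)$, which follows from the constancy of $|u|$ on $B(0, r_0)$, to obtain
\[
d_v F(p) = \sum_i m_i \cdot d_v\bigl(-\mathrm{log}|T - a_i|\bigr)(p).
\]
For each factor, the outgoing slope along the branch $v$, which corresponds to some open sub-disk $B(a, r) \subset B(0, r)$, is $0$ when $a_i \notin B(a, r)$, and equals $-1$ per unit skeletal length when $a_i \in B(a, r)$; the sign follows the convention of \cite[Definition 5.14]{BPR}, under which the sum of outgoing slopes vanishes at regular points and is consistent with the Laplacian computation carried out in the proof of Lemma \ref{slopes and backward branching index}. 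Summing over $i$ picks out exactly the zeros lying in $B(a, r)$, which by hypothesis on $v$ coincides with $Z_f(r, v)$, and this gives $d_v F(p) = \sum_{z \in Z_f(r, v)} -\mathrm{ord}_z(f)$ as required. The main technical input is the Weierstrass factorization on $B(0, r_0)$; the rest is routine additivity together with the tree-theoretic description of $-\mathrm{log}|T-a|$, and the only delicate point is the consistent tracking of the BPR sign convention throughout.
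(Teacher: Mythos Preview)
Your proposal is correct and follows essentially the same route as the paper: both arguments restrict to a closed sub-disk, apply Weierstrass preparation to write $f$ as a unit times $\prod_j (T-a_j)$, and then read off the slope from the linear factors. The only cosmetic difference is that the paper packages the final slope computation by citing \cite[Theorem 5.15]{BPR} (the formula $d_vF(p) = -\mathrm{ord}_v(\widetilde{f_p})$ via the reduction $\widetilde{f_p}$), whereas you compute it directly from $|T-a|_{\zeta_{b,s}} = \max(|b-a|,s)$; these are equivalent elementary calculations.
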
    
       \begin{proof} 
    Let $B' \subset O(0,1)$ be a Berkovich closed disk around $0$ and 
    $B := f(B')$. 
    By \cite[Lemma 2.33]{WE2}, $B$ is a Berkovich closed disk
    and we identify $B'$ and $B$ with the Berkovich closed unit disk around the origin.
     The function $f : B' \to B$ corresponds to an element $f \in k\{S\}$.
     By Weierstrass preparation, 
     we get that 
     $f = c\prod_j (S - a_j)u$ where $u \in k\{S\}$ is invertible 
     and $|s(u)| = 1$ for all $s \in B'$. 
     The map 
     $F_{|B'}$ coincides with the map 
     $s \mapsto \mathrm{val}(c) + \Sigma_j -\mathrm{log}(|(S-a_j)(s)|)$ and this is well known to be piecewise affine. 
     Since $B'$ can be taken to be arbitrarily large in $O(0,1)$, we get that 
     $F$ is piecewise affine and harmonic on $O(0,1)$.  
  
       We may assume without loss of generality that 
       $v$ is the outgoing path from $p$ to the origin. 
       By \cite[Theorem 5.15]{BPR}, we get that 
       $$d_vF(p) = -\mathrm{ord}_v(\widetilde{f_p})$$ where 
       $\widetilde{f_p}$ is the reduction as defined in 
       \S 5.13 of loc.cit. We can calculate $\widetilde{f_p}$ using the explicit description of $f$ to get the result. 
             \end{proof} 
             
     As before,  let $x \in C'(k)$ and $y := f(x)$. 
  Recall that the map $f^{\mathrm{an}}$ induces a map 
 $f_x^{\mathrm{an}} = l_y^{-1} \circ f^{\mathrm{an}} \circ l_x : (0,\infty] \to (0,\infty]$.
 There exists a tuple $$T_x := ((t_0 = \infty,d_0,\alpha_0),\ldots,(t_{m_x},d_{m_x},\alpha_{m_x})) \subset ([0,\infty] \times \mathbb{N} \times [0,\infty])^{m_x}$$ 
  such that for every $i$, $f_x^{\mathrm{an}}$ restricted to 
 $(t_i,t_{i+1}]$ is the map $t \mapsto d_it + \alpha_i$.
 
    Recall from section \S \ref{backward branching index}, 
    that the function 
    $BB_{h'^{\mathrm{an}},f}(x) : [0,\infty] \to \mathbb{N} \times [0,\infty]$ is 
    determined completely by the tuple 
    $S_x := ((s_0 = \infty,e_0,\beta_0),\ldots,(s_{n_x},e_{n_x},\beta_{n_x}))$ where 
     the 
    $s_i$ are the break points i.e. for every 
    $i$, $e_i \neq e_{i+1}$.

 \begin{lem} \label{weierstrass prep}
  Let $f : O(0,1) \to O(0,1)$ be a finite surjective analytic morphism such that 
  $f(0) = 0$. There exists a real number $r_0 \in (0,1)$ such 
  that for every $r \geq r_0$, the restriction of $f$ to the ball $O(0,r)$ is of the form 
  $T \mapsto |T-\beta_1|\ldots|T-\beta_k|$ where 
  $\beta_1,\ldots,\beta_k$ are the preimages of $0$. 
 \end{lem}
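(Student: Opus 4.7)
The strategy is to apply Weierstrass preparation on each sufficiently large closed sub-disk and then use the surjectivity of $f$ to normalise the resulting unit. Let $k := \deg f$; since $f$ is finite of degree $k$ with $f(0)=0$, its preimages of $0$ (listed with multiplicity) are precisely the points $\beta_1,\ldots,\beta_k$, each lying in the open unit disk $O(0,1)$. Choose $r_0 \in (0,1)$ with $|\beta_i| < r_0$ for every $i$.

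For any $r \in [r_0,1)$, the restriction of $f$ to the Berkovich closed sub-disk $\bar{O}(0,r)$ lies in the Tate algebra $k\{T/r\}$ and its zeros on that disk are exactly the $\beta_i$'s with their multiplicities (since the total zero-multiplicity of $f$ on $O(0,1)$ is $\deg f = k$). Consequently, the ratio
\[
g(T) \;:=\; \frac{f(T)}{\prod_{i=1}^{k}(T-\beta_i)}
\]
extends analytically across each $\beta_i$ and is nowhere zero on $\bar{O}(0,r)$, so $g$ is a unit in $k\{T/r\}$. Any unit of such a Tate algebra has constant absolute value on the associated closed disk, equal to its value at the origin; moreover $g$ does not depend on $r$. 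Hence $|g(T)|$ is a single real constant $\lambda > 0$, independent of both $T \in O(0,1)$ and of $r$, and
\[
|f(T)| \;=\; \lambda \prod_{i=1}^{k} |T - \beta_i| \qquad \text{for all } T \in O(0,1).
\]

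It remains to show that $\lambda = 1$. I would evaluate at the Gauss point $\zeta_{0,r}$: since $|\beta_i| < r$, we have $|\zeta_{0,r} - \beta_i| = r$ and therefore $|f(\zeta_{0,r})| = \lambda r^k$. The maximum modulus principle identifies $|f(\zeta_{0,r})|$ with $\sup_{T \in \bar{O}(0,r)} |f(T)|$, and the surjectivity of $f$ onto $O(0,1)$ forces $\sup_{T \in O(0,1)} |f(T)| = 1$, so $|f(\zeta_{0,r})| \to 1$ as $r \to 1$. Passing to the limit in $\lambda r^k = |f(\zeta_{0,r})|$ yields $\lambda = 1$, completing the argument. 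The only delicate point — and the main conceptual obstacle — is this identification $\lambda = 1$: Weierstrass preparation alone only produces the factored form up to a global constant, and one must exploit the surjectivity hypothesis (equivalently, the boundary behaviour of $f$ near the Gauss point at radius $1$) to pin that constant down.
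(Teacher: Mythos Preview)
Your proof is correct and follows the same two-step outline as the paper: factor $f$ via Weierstrass preparation as a polynomial times a unit, then use the surjectivity of $f$ onto $O(0,1)$ to force the unit's absolute value to be $1$. The execution differs slightly. The paper rescales by an element $a$ with $|a|=r$ to work on the standard closed unit disk, applies Weierstrass preparation there, and then, to pin down the constant, invokes Temkin's result that the profile function $f_0 = l_0^{-1}\circ f\circ l_0$ is piecewise affine with integer slopes, writing $f_0(t)=dt+\alpha$ near $t=0$ and arguing that surjectivity forces $\alpha=0$. You instead form the global quotient $g=f/\prod(T-\beta_i)$, observe it is a unit of constant modulus $\lambda$ on every large closed sub-disk, and evaluate at the Gauss points $\zeta_{0,r}$ to read off $|f(\zeta_{0,r})|=\lambda r^k$, letting $r\to 1$. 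These are the same computation in multiplicative versus additive (valuative) coordinates: your $\lambda$ is exactly $e^{-\alpha}$ in the paper's notation. Your version has the mild advantage of being self-contained, avoiding the external citation for piecewise affinity, since the Gauss-point evaluation gives you the needed linear form directly.
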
    
    \begin{proof} 
    Let $r \in (0,1)$ and    
    $a \in k^*$ such that $|a| = r$. 
    The function $T \mapsto f(aT)$ is an analytic function 
    on the closed unit disk $B(0,1)$. It follows
    from the Weierstrass preparation theorem
     that 
    $$f(aT) = c(r)(T-\alpha_1)\ldots(T-\alpha_s)u(T)$$ 
    where $u$ is a unit on $B(0,1)$ and $c(r) \in k^*$.
    By applying the transformation $T =  T/a$, we see 
    that  if $0 \leq |T| \leq r$ then 
    $$f(T) = c(r)/a^s(T-a\alpha_1)\ldots(T-a\alpha_s)u(T/a).$$
    Thus for $x \in O(0,r)$, 
    $$|f(T)(x)| = |c(r)/a^s||(T-a\alpha_1)(x)|\ldots|(T - a\alpha_s)(x)||u(T/a)(x)|.$$
    Since $u(T)$ is a unit on $B(0,1)$ and $T \mapsto T/a$ is an isomorphism 
    from $O(0,r)$ to $O(0,1)$ we get that $u(T/a)$ is a unit on $O(0,r)$ and hence 
    $$|f(T)(x)| = |c(r)/a^s||(T-a\alpha_1)(x)|\ldots|(T - a\alpha_s)(x)|.$$
    
       By \cite[Lemma 2.2.5]{TEM3}, $f_0 := l_0^{-1}fl_0$ is piecewise affine with integer slopes
      where $l_0$ is the isometric embedding of $(0,\infty]$ into $O(0,1)$ as explained in \S \ref{introduction}.
       It should be noted that the notation employed in Temkin is multiplicative while we 
       write the value group additively. 
        It follows that there exists $t_0 \in [0,\infty]$ such that 
        for $t \in (0,t_0]$, $f_0(t) = dt + \alpha$ for some $\alpha \in \mathrm{val}(k^*)$.
        Since $f_0$ maps $O(0,1)$ bijectively onto itself, we must have that 
        $\alpha = 0$. 
       Hence for $r > \mathrm{exp}(-t_0)$ and $r > |a\alpha_i|$ for every $i$, we must have that
       $|c(r)/a^s| = 1$. Setting $\beta_i = \alpha_i$ completes the proof of the lemma. 
      \end{proof} 
      
 \begin{cor} \label{corollary weirstrass prep}
   Let $f : O(0,1) \to O(0,1)$ be a finite surjective étale analytic morphism such that 
  $f(0) = 0$. As before, let $f_0 := l_0^{-1} \circ f \circ l_0$ where 
  $l_0$ is the isometric embedding $(0,\infty] \to O(0,1)$. 
  Let 
  $$T_0 := ((t_0 = \infty,d_0,\alpha_0),\ldots,(t_{m_x},d_{m},\alpha_{m})) \subset ([0,\infty] \times \mathbb{N} \times [0,\infty])^{m}$$
   be  
  such that for every $i$, $f_0$ restricted to 
 $(t_i,t_{i+1}]$ is the map $t \mapsto d_it + \alpha_i$. 
  Let $\mathcal{B} := \{\beta_0  = 0,\ldots,\beta_n\}$ be the preimages of $0$
  such that $|\beta_0| < |\beta_1| \leq \ldots \leq |\beta_n|$ and 
  $|\beta_{i_0} = 0| < |\beta_{i_1}| \ldots < |\beta_{i_l}|$ be the 
  distinct values of the set 
  $\{|\beta_0|,\ldots,|\beta_n|\}$. We then have that 
  $m = i_l$ and for every $0 \leq j \leq m$, the following hold. 
  \begin{enumerate}
  \item $t_j = -\mathrm{log}|\beta_{i_j}|$.
  \item $d_{j} = \mathrm{card}\{\beta \in \mathcal{B} | |\beta| \leq |\beta_{i_j}|\}$. 
  \item $\alpha_j =  \Sigma_{\beta \in \mathcal{B}}( -\mathrm{log}|\beta|)  - \Sigma_{\beta \in \mathcal{B}, |\beta| \leq |\beta_{i_j}|} (-\mathrm{log}|\beta|)$   
      \end{enumerate}
   \end{cor}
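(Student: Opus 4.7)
I would approach this by a direct computation, using Lemma~\ref{weierstrass prep} to pin down $|f|$ on every Gauss point $\zeta_{0,s}$ and then translating via the isometry $l_0$ to read off the slopes, intercepts, and break points. The plan is to first produce a closed formula for $f_0(t)$ as a finite sum of truncated linear functions, and then to match the resulting piecewise-affine data against the tuple $T_0$.

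First I would fix $s \in (0,1)$ and choose $r$ with $\max\bigl(s, \max_{\beta \in \mathcal{B}} |\beta|\bigr) < r < 1$, which is possible because $\mathcal{B}$ is finite and all its elements lie in the open unit disk. Applying Lemma~\ref{weierstrass prep} to the ball $O(0,r)$ yields
\[
|f(T)|(x) \;=\; \prod_{\beta \in \mathcal{B}} |T-\beta|(x)
\qquad\text{for every } x \in O(0,r),
\]
where the product runs over the full set $\mathcal{B}$ of preimages of $0$; étaleness guarantees that each preimage is simple, so no multiplicity factors appear. Specializing to $x = \zeta_{0,s}$ and using $|T-\beta|(\zeta_{0,s}) = \max(s,|\beta|)$, together with the fact that $f$ sends the Gauss point of the disk of radius $s$ to the Gauss point of its image (so $f(\zeta_{0,s}) = \zeta_{0,|f(\zeta_{0,s})|}$), I obtain
\[
|f(\zeta_{0,s})| \;=\; \prod_{\beta \in \mathcal{B}} \max(s,|\beta|).
\]

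Next I would substitute $s = \exp(-t)$ and take $-\log$, which turns the product into the sum
\[
f_0(t) \;=\; \sum_{\beta \in \mathcal{B}} \min\bigl(t,\,-\log|\beta|\bigr),
\]
with the convention $-\log|\beta_0| = -\log 0 = \infty$ so that the $\beta_0$-term always contributes $t$. This function is manifestly continuous and piecewise affine, its break points being exactly the distinct finite values of $-\log|\beta|$, namely $\tau_k := -\log|\beta_{i_k}|$ for $k=0,\dots,l$, with $\tau_0 = \infty$ and $\tau_0 > \tau_1 > \dots > \tau_l$. This proves (1) with $t_j = \tau_j$.

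Finally, on the interval $(\tau_{j+1},\tau_j]$, the terms in the sum that contribute $t$ are precisely those indexed by $\beta \in \mathcal{B}$ with $-\log|\beta| \geq \tau_j$, i.e.\ with $|\beta| \leq |\beta_{i_j}|$, while the remaining terms contribute the constant $-\log|\beta|$. This yields
\[
f_0(t) \;=\; \mathrm{card}\bigl\{\beta \in \mathcal{B}: |\beta| \leq |\beta_{i_j}|\bigr\}\cdot t \;+\; \sum_{\substack{\beta \in \mathcal{B}\\ |\beta| > |\beta_{i_j}|}} (-\log|\beta|),
\]
giving (2) directly and (3) after rewriting the constant as the total sum $\sum_{\beta}(-\log|\beta|)$ minus $\sum_{|\beta|\leq|\beta_{i_j}|}(-\log|\beta|)$. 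The only real subtlety is ensuring that the Weierstrass-type formula from Lemma~\ref{weierstrass prep} applies to the same set $\mathcal{B}$ for every $s \in (0,1)$; this is handled uniformly because $\mathcal{B}$ is finite and strictly contained in the open unit disk, so a single choice of $r$ close to $1$ captures every preimage at once, and letting $r \to 1^-$ covers all Gauss points $\zeta_{0,s}$ with $s<1$.
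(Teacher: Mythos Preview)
Your proposal is correct and follows exactly the approach the paper indicates: an explicit computation from the description in Lemma~\ref{weierstrass prep}. The paper's own proof is a one-line reference to that lemma, and you have faithfully carried out the computation it alludes to, so there is nothing to add.
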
  
 \begin{proof} 
   The corollary is deduced by explicit computation using the description provided by 
   Lemma \ref{weierstrass prep}. 
  \end{proof}
    
        \begin{rem} 
   \emph{It might happen that for any $x \in C'$, there exists $t_0 \in [0,\infty]$ such that 
   $h'(-,x)$ is constant on $[0,t_0]$. As a consequence, $s_{i} \geq t_0$. The fact that 
   the $s_i$ take values in $(t_0,\infty]$ while the $t_i$ take values in $(0,\infty]$ 
   need to be taken into account when relating them.  
     Hence we introduce an invariant that will help relate the Backward branching index 
   with the profile functions. Given $x \in C'$, let $s_{n_x + 1} \in [0,\infty]$ 
   be the largest element such that $h'(-,x)$ is constant 
   when restricted to the interval 
   $[0,s_{n_x + 1}]$. By definition of $BB_{h',x}$, we have that 
   $s_j \geq s_{n_x + 1}$ for every $1 \leq j \leq n_x$.}
    \end{rem}  
 
 \begin{lem} \label{T_x and S_x are intercalculable}
We make use of the notation introduced above. Suppose that
$C = \mathbb{P}^1$ and $g$ is the identity map. 
Let $x \in C'(k)$.
We then have that $n_x = m_x$
and
for every $0 \leq j \leq m_x = n_x$, the following equalities hold.
\begin{enumerate}   
\item $s_j - s_{n_x + 1} = d_{j}t_j + \alpha_{j}$
\item $e_j = d_j$
\item $\alpha_j = \beta_{m_x} - \beta_j$
\end{enumerate}
   Lastly, if $h'^{\mathrm{an}}_x : [0,\infty] \to C'^{\mathrm{an}}$ denotes the
   path defined by 
   $s \mapsto h'^{\mathrm{an}}(s,x)$
then
   for every 
   $a,b \in [s_i,s_{i+1})$ with $a < b$ and $1 \leq i \leq n_x$, 
   we have that $\rho(h'^{\mathrm{an}}_x(a),h'^{\mathrm{an}}_x(b)) = (1/d_i)(b-a)$
    where $\rho$ is the path distance metric on $C'^{\mathrm{an}}$. 
 \end{lem}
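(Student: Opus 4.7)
The plan is to reduce everything to explicit computations on the Berkovich open unit disk. Let $O' \subset C'^{\mathrm{an}}$ be the connected component of $C'^{\mathrm{an}} \setminus \Sigma''_D$ containing $x$ and let $O$ be the analogous component of $\mathbb{P}^{1,\mathrm{an}} \setminus \Sigma_D$ containing $f(x)$. I identify both with the Berkovich open unit disk $O(0,1)$ so that $x$ and $f(x)$ sit at the origin; as observed in the proof of Lemma \ref{slopes and backward branching index}, the divisor $D$ contains the branch locus of $g \circ f = f$, which forces the restriction of $f$ to be a finite, surjective, \'etale analytic map $O(0,1) \to O(0,1)$ sending $0$ to $0$. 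Write $\mathcal{B} = \{\beta_0 = 0, \beta_1, \ldots, \beta_n\}$ for the fiber of $f$ over $f(x)$ inside $O'$, and let $|\beta_{i_0}| = 0 < |\beta_{i_1}| < \cdots < |\beta_{i_{m_x}}|$ be the distinct absolute values.

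Corollary \ref{corollary weirstrass prep} gives the profile tuple $T_x$ directly in terms of $\mathcal{B}$: $t_j = -\log|\beta_{i_j}|$, $d_j = \mathrm{card}\{\beta \in \mathcal{B} : |\beta| \leq |\beta_{i_j}|\}$, and $\alpha_j = \sum_{|\beta| > |\beta_{i_j}|} (-\log|\beta|)$. To get a matching description of $S_x$ I use uniqueness of path lifting. Since $g = \mathrm{id}$ the base homotopy is just $\psi_D$, and in the identification above, $\psi^{\mathrm{an}}_D(s, f(x))$ traces $l_{f(x)}$ from the boundary of $O$ to $f(x)$ as $s$ varies over $[s_{n_x+1}, \infty]$, where $s_{n_x+1}$ is precisely the homotopy cutoff $m_D(f(x))$. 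Combining the defining identity $l_{f(x)} \circ f_x^{\mathrm{an}} = f^{\mathrm{an}} \circ l_x$ with uniqueness of lifting through $f$ yields
\[
h'^{\mathrm{an}}(s, x) = l_x\bigl((f_x^{\mathrm{an}})^{-1}(s - s_{n_x+1})\bigr) \qquad \text{for } s \geq s_{n_x+1},
\]
and the same formula with $x$ replaced by any $z \in \mathcal{B}$ (with the same cutoff because $f(z) = f(x)$).

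For $z \in \mathcal{B}$, the equality $l_x(u) = l_z(u)$ in $O(0,1)$ is equivalent to $|z| \leq \exp(-u)$. Hence $h'^{\mathrm{an}}(s, z) = h'^{\mathrm{an}}(s, x)$ iff $|z| \leq \exp\bigl(-(f_x^{\mathrm{an}})^{-1}(s - s_{n_x+1})\bigr)$; for $s$ in the piece on which $(f_x^{\mathrm{an}})^{-1}(s - s_{n_x+1}) \in (t_{j+1}, t_j]$, this is equivalent to $|z| \leq |\beta_{i_j}|$. On this piece the first coordinate of $BB_{h',f}(x)(s)$ therefore equals $d_j$ and the second coordinate equals $\sum_{|\beta| \leq |\beta_{i_j}|} (-\log|\beta|)$. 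This gives (3) $e_j = d_j$; combined with $\beta_{m_x} = \sum_{\beta \in \mathcal{B}} (-\log|\beta|)$, it also gives (4) $\beta_{m_x} - \beta_j = \sum_{|\beta| > |\beta_{i_j}|} (-\log|\beta|) = \alpha_j$. The breakpoints of $BB_{h',f}(x)$ occur exactly when $(f_x^{\mathrm{an}})^{-1}(s - s_{n_x+1})$ crosses one of the $t_j$'s, i.e.\ at $s_j = f_x^{\mathrm{an}}(t_j) + s_{n_x+1} = d_j t_j + \alpha_j + s_{n_x+1}$, rearranging to (2); counting breakpoints on both sides then yields (1) $n_x = m_x$.

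For the last assertion, use that $l_x$ is an isometric embedding for the path-distance metric $\rho$. For $a < b$ in a single piece of the BB function, both $(f_x^{\mathrm{an}})^{-1}(a - s_{n_x+1})$ and $(f_x^{\mathrm{an}})^{-1}(b - s_{n_x+1})$ lie in $(t_{i+1}, t_i]$, so
\[
\rho\bigl(h'^{\mathrm{an}}_x(a), h'^{\mathrm{an}}_x(b)\bigr) = \bigl|(f_x^{\mathrm{an}})^{-1}(b - s_{n_x+1}) - (f_x^{\mathrm{an}})^{-1}(a - s_{n_x+1})\bigr|,
\]
and since $f_x^{\mathrm{an}}$ has constant slope $d_i$ on $(t_{i+1}, t_i]$, its inverse has slope $1/d_i$ there, giving $\rho = (b-a)/d_i$. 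The main technical subtlety throughout is faithfully tracking the shift by $s_{n_x+1}$ when passing between the profile parameterization (defined via the isometric paths $l_x$ and $l_{f(x)}$) and the homotopy parameterization (which is cut off at $m_D(f(x))$); once this shift is pinned down, items (1)--(4) follow by direct combinatorial matching using the explicit formulas of Corollary \ref{corollary weirstrass prep}.
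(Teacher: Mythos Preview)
Your argument is correct and follows essentially the same route as the paper: both proofs hinge on the identity $h'^{\mathrm{an}}_x(s) = l_x\bigl((f_x^{\mathrm{an}})^{-1}(s - s_{n_x+1})\bigr)$ (equivalently, $s - s_{n_x+1} = f_x^{\mathrm{an}}(t)$), obtained from uniqueness of lifting and the explicit description of $\psi_D^{\mathrm{an}}$, and then read off the relations using Corollary~\ref{corollary weirstrass prep}. The only organizational difference is that you compute the set $B_{h',f}(x)(s)$ directly via the merge-point criterion $|z| \leq \exp(-u)$, whereas the paper invokes Lemma~\ref{slopes and backward branching index} for the cardinality; your argument is in effect a streamlined reproof of that lemma in the special case at hand. (One cosmetic point: your item labels (1)--(4) are shifted by one relative to the statement, since $n_x = m_x$ is not numbered there.)
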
       
\begin{proof} 
Let $T_x = ((t_0 = \infty,d_0,\alpha_0),\ldots,(t_{m_x},d_{m_x},\alpha_{m_x}))$. 
Recall that the function $F$ given by $t \mapsto -\mathrm{log}|T(f(p))|$ is piecewise affine 
and its breakpoints coincide with set 
$\{t_0,\ldots,t_{m_x}\}$. 
Lemma \ref{slopes and backward branching index} relates the
 break points of the backward branching index $BB_{h'^{\mathrm{an}},f}(x)$
with the break points of $F$ and we deduce that 
\begin{align}
T_x = ((l_x^{-1}{h'}_x^{\mathrm{an}}(s_0),e_0,\alpha_0),\ldots,(l_x^{-1}{h'}_x^{\mathrm{an}}(s_{n_x}),e_{n_x},\alpha_{n_x}))
\end{align}
In particular, we see
that $n_x = m_x$ and verify
 assertion (2) i.e. $e_i = d_i$.
    Let $t = l_x^{-1}h'^{\mathrm{an}}_x(s)$ for $s \in (s_{n_x + 1},\infty]$.
        We make the following sequence of deductions. 
    \begin{align} 
     l_x(t) &= h'^{\mathrm{an}}_x(s). \\
     f^{\mathrm{an}} \circ l_x(t) &= f^{\mathrm{an}} \circ h'^{\mathrm{an}}_x(s).
    \end{align} 
 Let $y := f(x)$.
By 
construction, the homotopy $h'$ is the lift of a homotopy
$\psi_D : [0,\infty] \times \mathbb{P}^1 \to \widehat{\mathbb{P}^1}$. 
 The path $\psi^{\mathrm{an}}_{D,y} : (0,\infty] \to \mathbb{P}^{1,\mathrm{an}}$ 
 differs from the path $l_y$ by a scaling factor. 
  One checks that 
  $$\psi^{\mathrm{an}}_{D,y} =  l_y \circ (s \mapsto s - s_{n_x + 1}).$$
Since $h'$ is the lift of $\psi_D$, 
\begin{align*} 
f^{\mathrm{an}} \circ h'^{\mathrm{an}}_x(s) &= \psi_{D,y}^{\mathrm{an}}(s) \\ 
              &= l_y(s - s_{n_x + 1}).
              \end{align*}

One deduces from (4) above that 
for every $s \in (s_{n_x + 1},\infty]$, 
\begin{align}
s - s_{n_x + 1} = f_x^{\mathrm{an}}(t).
\end{align} 
It follows from (2) that $s_i - s_{n_x + 1} = f_x^{\mathrm{an}}(t_i)$. 
By definition of the invariants 
$d_i$ and $\alpha_{i}$, $f_x^{\mathrm{an}}(t_i) = d_{i}t_i + \alpha_{i}$. 
We may thus conclude a proof for assertion (1) of the lemma.
Lastly, assertion (3) from the Lemma follows from Corollary \ref{weierstrass prep}.

We now prove the last part of the lemma. 
By Equation (2),
\begin{align}
 \rho({h'}_x^{\mathrm{an}}(a),{h'}_x^{\mathrm{an}}(b)) &= \rho(l_x(f_x^{\mathrm{an}})^{-1}(a - s_{n_x + 1}),l_x(f_x^{\mathrm{an}})^{-1}(b - s_{n_x + 1})) \\
               &= |(f_x^{\mathrm{an}})^{-1}(a - s_{n_x + 1}) - (f_x^{\mathrm{an}})^{-1}(b - s_{n_x + 1})|.
\end{align} 

By Equation (5) and since $a,b \in [s_i,s_{i+1})$, we must have that 
$$(f_x^{\mathrm{an}})^{-1}(a - s_{n_x + 1}), (f_x^{\mathrm{an}})^{-1}(b - s_{n_x + 1}) \in [t_i,t_{i  + 1}).$$
Recall that if $t \in [t_i,t_{i+1})$ then 
$f_x^{\mathrm{an}}(t) = d_it + \alpha_i$. 
It follows that 
$$(f_x^{\mathrm{an}})^{-1}(a - s_{n_x + 1}) = (1/d_i)(a - s_{n_x + 1}) - (1/d_i)(\alpha_i)$$
and 
$$(f_x^{\mathrm{an}})^{-1}(b - s_{n_x + 1}) = (1/d_i)(b - s_{n_x + 1}) - (1/d_i)(\alpha_i).$$

A simple calculation using Equation (7) now completes the proof. 


\end{proof}        
       
 \subsection{Radiality}       
       
Lemma \ref{T_x and S_x are intercalculable} tells us that the tuples 
$T_x$ and $S_x$ are equivalent as invariants of the morphism $f$ and the deformation $h'$. 
We use this fact along with Theorem \ref{backward branching is constant along retractions} to 
show the following result which is also proved in \cite[Theorem 3.3.11]{TEM3}.

\begin{thm} \label{tameness of profile functions}
  Let $f : C' \to C$ be a finite separable morphism of smooth projective irreducible $k$-curves.  
  There exists deformation retractions 
  $h'^{\mathrm{an}} : [0,\infty] \times C'^{\mathrm{an}}\to C'^{\mathrm{an}}$ and 
  $h^{\mathrm{an}} : [0,\infty] \times C^{\mathrm{an}} \to C^{\mathrm{an}}$ 
  which are compatible with the morphism $f^{\mathrm{an}}$ and satisfy the following properties. 
  \begin{enumerate} 
  \item The images $\Sigma' := h'(0,C'^{\mathrm{an}})$ and $\Sigma = h(0,C^{\mathrm{an}})$ are
   skeleta of the curves $C'^{\mathrm{an}}$ and $C^{\mathrm{an}}$. 
  \item We make use of the notation from \S \ref{length of a definable path}.
  The functions $f_x^{\mathrm{an}}$ are constant along the fibres of the retraction. 
  Equivalently,
  for every $x,y \in C'(k)$,
if  $h'^{\mathrm{an}}(x) = h'^{\mathrm{an}}(y)$ then 
  $T_x = T_y$. We write $T^f_x$ in place of $T_x$ to emphaisize the role of the morphism $f$.
   \item There exists a function $T^f : \Sigma' \to \bigcup_{M \in \mathbb{N}} ((0,\infty] \times \mathbb{N})^{M}$ given by 
   $p \mapsto T^f_p$ where if
   $T^f_p  = ((t_0 = \infty,d_0,\alpha_0),\ldots,(t_{m_p},d_{m_p},\alpha_{m_p}))$ 
  and $x$ retracts to $p$ via $h'^{\mathrm{an}}$ then $T^f_x = T^f_p$.
  Furthermore, the function $T^f$ is definable by which we mean the following. 
If $\Sigma'_m$ denotes the locus of points $p \in \Sigma'$
  where $T^f_p$ is an $m$-tuple then 
  $\Sigma'_m$ is the union of finitely many segments whose end points are points of 
  type I or II 
   and  
  for every $0 \leq i \leq m$, the function $p \to d_i$ is constant on each such segment while
  $p \mapsto \alpha_i$ is 
  piecewise affine on 
  $\Sigma'_m$.  
\end{enumerate} 
\end{thm}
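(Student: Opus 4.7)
The plan is to combine the backward branching finiteness result (Theorem \ref{backward branching is constant along retractions}) with the translation Lemma \ref{T_x and S_x are intercalculable} to transport the result from the Hrushovski--Loeser setting into the analytic one, and then to unpack the resulting combinatorial data on the skeleton.

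First I would pick a finite morphism $g: C \to \mathbb{P}^1$ and a divisor $D \subset \mathbb{P}^1$ large enough that the homotopy $\psi_D$ on $\mathbb{P}^1$ lifts uniquely (via $g$ and via $g \circ f$) to definable homotopies $h: [0,\infty] \times C \to \widehat{C}$ and $h': [0,\infty] \times C' \to \widehat{C'}$, and such that $D$ contains the branch loci of both $g$ and $g \circ f$; this last requirement is what ensures (by the argument in the proof of Lemma \ref{slopes and backward branching index}) that $f$ is \'etale outside the preimage of $D$, so the backward branching counts multiplicities correctly. By Remark \ref{treating multiple morphisms} we can simultaneously arrange (by further cut-offs by a common $v+g$-continuous function pulled back from $\mathbb{P}^1$) that $h'$ and $h$ remain compatible with $f$ and that $BB_{h',f}$ is constant along the fibres of the retraction $h'(0,-)$. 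Passing to the analytic side via $\pi_{k,C}$ and $\pi_{k,C'}$, one obtains compatible deformation retractions $h'^{\mathrm{an}}$ and $h^{\mathrm{an}}$ whose images $\Sigma'$ and $\Sigma$ are the skeleta $\Sigma''_D = ((g\circ f)^{\mathrm{an}})^{-1}(\Sigma_D)$ and $\Sigma'_D = (g^{\mathrm{an}})^{-1}(\Sigma_D)$ from \S\ref{length of a definable path}, using Lemma \ref{fibres lemma skeleta} to identify $\Upsilon'(k^{\max})$ homeomorphically with $\Sigma'$. This gives (1).

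For (2) I would apply Lemma \ref{T_x and S_x are intercalculable}. Since $BB_{h',f}$ is constant along the fibres of the retraction (Theorem \ref{backward branching is constant along retractions}) and since for $k$-rational points $BB_{h'^{\mathrm{an}},f}(x) = BB_{h',f}(x)$, the tuples $S_x$ are constant along fibres of $h'^{\mathrm{an}}(0,-)$. The intercalculability (assertions (1)--(3) of Lemma \ref{T_x and S_x are intercalculable}) then shows $T_x$ is reconstructible from $S_x$ together with the scalar $s_{n_x + 1}$, and the latter is itself constant on fibres because it equals the length of the constant initial segment of $h'^{\mathrm{an}}(-,x)$, which by construction is a $v+g$-continuous function preserved by the retraction. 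Hence $T^f_x = T^f_y$ whenever $h'^{\mathrm{an}}(0,x) = h'^{\mathrm{an}}(0,y)$.

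For (3) I would use the explicit description of the pieces of $T^f_p$ coming from Corollary \ref{corollary weirstrass prep}. Stratify $\Sigma'$ by the value of $m_x$; each stratum is the projection of a definable subset of $C'$, hence a finite union of segments with type I/II endpoints by standard description of $\Gamma$-internal definable subsets. On each such segment, the break points $t_i$, the slopes $d_i$, and the translations $\alpha_i$ are, via Corollary \ref{corollary weirstrass prep}, expressed as counts of preimages $\beta \in \mathcal{B}$ with $|\beta|$ in a prescribed range and sums of $-\log|\beta|$ over these preimages. The counts give the locally constant integer slopes $d_i$; the sums of valuations give piecewise affine functions on the skeleton. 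One then checks that the piecewise affine structure in $p$ refines into a finite subdivision into subsegments on which $d_i$ is genuinely constant and $\alpha_i$ is affine.

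The main obstacle, I expect, will be the bookkeeping in step (3): carefully matching the definable stratification produced by Theorem \ref{backward branching is constant along retractions} on $\Sigma'$ with the explicit combinatorial data $(t_i, d_i, \alpha_i)$, and arguing the piecewise affinity of $\alpha_i$ globally on each $\Sigma'_m$ rather than merely locally around each point. Everything else is essentially a translation between the model-theoretic invariants $S_x$ and the analytic invariants $T_x$ that has already been set up in the preceding lemmas.
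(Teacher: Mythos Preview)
Your overall strategy is close to the paper's, but there is a genuine gap in step~(2). Lemma~\ref{T_x and S_x are intercalculable} carries the standing hypothesis that the target curve is $\mathbb{P}^1$ (and the auxiliary map $g$ is the identity): its proof relies on the fact that for $y \in \mathbb{P}^1(k)$ the path $\psi^{\mathrm{an}}_{D,y}$ agrees with the isometric path $l_y$ up to a shift, namely $\psi^{\mathrm{an}}_{D,y} = l_y \circ (s \mapsto s - s_{n_x+1})$. For a general target $C$ the homotopy $h^{\mathrm{an}}$ is only the \emph{lift} of $\psi_D$ through $g : C \to \mathbb{P}^1$, so the path $h^{\mathrm{an}}(-,y)$ is not $l_y$ up to translation but rather $l_y$ reparametrised by the profile function of $g$ at $y$. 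Consequently there is no direct dictionary between $BB_{h',f}(x)$ and $T^f_x$: the tuple $S_x$ attached to $BB_{h',f}$ determines $T^f_x$ only once one also knows $T^g_{f(x)}$, and your argument never arranges this second quantity to be constant along the fibres of the retraction.

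The paper sidesteps this by never using $BB_{h',f}$ for the given $f$. Instead it controls the two backward branching functions $BB_{h',\,g\circ f}$ and $BB_{h,\,g}$ (both with target $\mathbb{P}^1$), applies Lemma~\ref{T_x and S_x are intercalculable} separately to each to obtain that $T^{g\circ f}_{x'}$ and $T^g_{f(x')}$ are constant along the fibres of $h'^{\mathrm{an}}(0,-)$, and then invokes a factorisation result of Temkin \cite[Lemma~2.3.8]{TEM3} asserting that $T^f_{x'}$ is completely determined by the pair $(T^{g\circ f}_{x'},\,T^g_{f(x')})$. Your appeal to Remark~\ref{treating multiple morphisms} and to Corollary~\ref{corollary weirstrass prep} in step~(3) is in the right spirit, but both should be applied to the pair $(g\circ f,\, g)$ of maps to $\mathbb{P}^1$ rather than to $f$ itself; once you make that substitution, the rest of your outline matches the paper's argument.
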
 
\begin{proof} 
 Let $g : C \to \mathbb{P}^1$ be a finite separable morphism. 
 We choose a finite subset $D \subset \mathbb{P}^1(k)$ such that if $\Sigma_D \subset \mathbb{P}^{1,\mathrm{an}}$ is 
 the convex hull of $D$ then the following are satisfied. 
 \begin{enumerate}
 \item Let $f' := g \circ f$. The preimages $\Sigma' := (f'^{\mathrm{an}})^{-1}(\Sigma_D)$ and 
 $\Sigma := (f^{\mathrm{an}})^{-1}(\Sigma_D)$ are skeleta of the curves 
 $C'^{\mathrm{an}}$ and $C^{\mathrm{an}}$ respectively. 
 \item The homotopy $\psi_D : [0,\infty] \times \mathbb{P}^1 \to \widehat{\mathbb{P}^1}$ lifts uniquely to homotopies
 $h' : [0,\infty] \times C' \to \widehat{C'}$ and $h : [0,\infty] \times C \to \widehat{C}$ via the morphisms 
 $f'$ and $f$ respectively. 
 \item The functions $BB_{h',f'} : C' \to \mathrm{Fn}([0,\infty],\mathbb{N})$ and 
 $BB_{h,g} : C \to  \mathrm{Fn}([0,\infty],\mathbb{N})$ are constant along the fibres of the retraction. 
 \end{enumerate}
    Assertion (1) follows from the \cite[Lemma 3.3]{WE2}. Assertion (2) is justified in 
    \S \ref{homotopies of C}. Assertion (3) is a consequence of 
    Theorem \ref{backward branching is constant along retractions} and 
    Remark \ref{treating multiple morphisms}. 
   By Lemma \ref{T_x and S_x are intercalculable}, we deduce from assertion (3)  
    that if $x',y' \in C'^{\mathrm{an}}$ are such that $h'^{\mathrm{an}}(0,x') = h'^{\mathrm{an}}(0,y')$ then 
    $T^{f'}_{x'} = T^{f'}_{y'}$.
    Likewise, if $x,y \in C^{\mathrm{an}}$ are such that $h^{\mathrm{an}}(0,x) = h^{\mathrm{an}}(0,y)$ then 
    $T^{g}_{x} = T^{g}_{y}$.
  
   Let $x' \in C'$ and $x := f(x')$. 
   Let 
   \begin{align*} 
   T^{f'}_{x'} &:= \{(t_0,d_0,\alpha_0),\ldots,(t_{n_1},d_{n_1},\alpha_{n_1})\} \\  
    T^g_{x} &:= \{(s_0,e_0,\beta_0),\ldots,(s_{n_2},e_{n_2},\beta_{n_2})\} \mbox{ and} \\ 
    T^f_{x'} &:= \{(u_0,c_0,\gamma_0),\ldots,(u_{n_3},c_{n_3},\gamma_{n_3})\}.
    \end{align*} 
    These tuples are such that if
   $t \in (t_i,t_{i+1}]$ then we have that 
   ${f'}_{x'}^{\mathrm{an}}(t) = d_it_i + \alpha_i$. 
   Likewise, if $s \in (s_i,s_{i+1}]$ we have that 
   $g_{x}^{\mathrm{an}}(s) = e_is_i + \beta_i$ and if $u \in (u_i,u_{i+1}]$ we have that 
   $f_{x'}^{\mathrm{an}}(u) = c_iu_i + \gamma_i$. 

%
  
    Observe that if 
     $x'_1,x'_2 \in C'$ are such that $h'^{\mathrm{an}}(0,x'_1) = h'^{\mathrm{an}}(0,x'_2)$ then 
     $T^f_{x'_1} = T^f_{x'_2}$. 
     By construction, we have that 
     $T^{f'}_{x'_1} = T^{f'}_{x'_2}$ and 
     if $T^{g}_{f(x'_1)} = T^g_{f(x'_2)}$. 
      By \cite[Lemma 2.3.8]{TEM3}, $T^f_{x'}$ is determined completely by 
   $T^g_x$ and $T^{f'}_{x'}$. 
   
       To complete the proof, we must show part (3) of the theorem. 
      By Lemma \ref{T_x and S_x are intercalculable} and 
      since $S_x$ is definable in $x$, we get that part (3) 
      is true for the functions 
     $T^g : \Sigma' \to \bigcup_{M \in \mathbb{N}} ((0,\infty] \times \mathbb{N})^{n_1}$ 
     and $T^{g \circ f} : \Sigma' \to \bigcup_{M \in \mathbb{N}} ((0,\infty] \times \mathbb{N})^{n_2}$.  
\end{proof} 

\begin{rem} 
  \emph{We preserve our notation from Theorem \ref{tameness of profile functions}.  The proof above shows that 
  $T^f_x$ can be calculated from $S^f_x$ and hence so can the functions $p \mapsto m_p$ 
  and for every $i$, $p \mapsto d_i$ and $p \mapsto \alpha_i$.}
\end{rem} 

   We now prove Theorem \ref{definable subsets are radial} which was discussed in \S \ref{introduction}. 
   Recall that our goal is to show that if given a closed definable subset $X$ of the analytification of a smooth projective 
   curve $C^{\mathrm{an}}$, there exists a skeleton $\Sigma$ of $C^{\mathrm{an}}$ 
   such that $X$ is piecewise affine radial around $\Sigma$ (cf. Definition \ref{radial set}). 
    The key ingredient in the proof is Theorem \ref{radiality of definable sets} which 
    proves a version of the radiality statement for Hrushovski-Loeser curves.  
    
\begin{proof} (Theorem \ref{definable subsets are radial}). 
  Let $X \subset C^{\mathrm{an}}$ be a closed path connected
   $k$-definable set (cf. Definition \ref{definable subset of an analytic curve}). By definition, there exists a closed 
 $k$-definable set $X' \subset \widehat{C}$ such that $X'$ is closed and $\pi_{k,C}(X') = X$. 
 Let $f : X \to \mathbb{P}^1$ be a finite morphism. By 
 Theorem \ref{tameness of profile functions}, there exists a divisor $D \subset \mathbb{P}^1$ such that 
 the deformation $\psi_D$ lifts uniquely to a deformation retraction 
 $h : [0,\infty] \times C \to \widehat{C}$, the image of the deformation 
 $h^{\mathrm{an}}$ is a skeleton of $C^{\mathrm{an}}$ and
  for every 
 $x \in C(k)$, the function $f_x^{\mathrm{an}}$ depends only on the point 
 $h^{\mathrm{an}}(0,x)$.
 Let $\beta_{X',H} : C \to [0,\infty] \times \{\gamma_1,\gamma_2\}$ be as in \S \ref{radiality}. 
 By Theorem \ref{radiality of definable sets}, we can enlarge $D$ and assume that 
 $\beta_{X',H}$ is constant along the fibres of the retraction $h(0,-)$. 
 By Theorem \ref{backward branching is constant along retractions}, 
 we can further enlarge $D$ and assume that 
 the function $BB_{h,f}$ is constant along the fibres of the retraction $h$.
 Let $\Upsilon \subset \widehat{C}$ denote the image of the deformation retraction 
 $h$. We have that 
 $\beta_{X',H}$ factors through a definable function 
 $\beta' : \Upsilon \to [0,\infty] \times \{\gamma_1,\gamma_2\}$. 
 Recall from the construction of $h^{\mathrm{an}}$ and Lemma \ref{fibres lemma} that
 for $x,y \in C(k)$, $h(0,x) = h(0,y)$ if and only if
 $h^{\mathrm{an}}(0,x) = h^{\mathrm{an}}(0,y)$. We deduce that 
  $[\beta_{X',H}]_{|C(k)}$ factors through the retraction 
 $h^{\mathrm{an}}(0,-)$. Let $\Sigma$ denote the image of the 
 homotopy $h^{\mathrm{an}}$ and let
 $\beta : \Upsilon(k^{max}) \simeq \Sigma \to \Gamma_\infty$ \footnote{The homeomorphism above is due to Lemma \ref{fibres lemma skeleta}.}
 be the 
 function defined as follows. We set $\beta(z) := p_1\beta'(z)$ if $\beta'(z) \neq (e(z),\gamma_2)$
 and $\beta(z) := \gamma_2 < 0$ if $\beta'(z) = (e(z),\gamma_2)$
 where the notation $e(z)$, $\gamma_1$ and $\gamma_2$ is as in \S \ref{radiality}.

 We make use of the notation introduced above Definition \ref{radial set}. 
 More precisely, we refer to the function 
 $r_{\Sigma} : C^{\mathrm{an}} \to [0,\infty]$ that measures the radius around the 
 skeleton $\Sigma$.
  Let $x \in C^{\mathrm{an}}$ and $\gamma := h^{\mathrm{an}}(0,x)$. We must show that there exists 
 $p_X(\gamma) \in [0,\infty]$ such that 
 $x \in X$ if and only if $r_\Sigma(x) \leq p_X(\gamma)$ where 
 $\gamma = h^{\mathrm{an}}(0,x)$.
 Let us first assume that $x$ is not of type IV. 
 We see that there exists a point $z \in C(k)$ such that 
 $x$ belongs to the image of the path $h_z^{\mathrm{an}}$ where 
 $h_z^{\mathrm{an}}$ was defined in 
 Lemma \ref{T_x and S_x are intercalculable}. 
By construction, 
$h_z^{\mathrm{an}}(t) \in X$ if and only if $t \in [0,\beta(\gamma)]$. 
Lemma \ref{T_x and S_x are intercalculable} allows us to calculate the 
distance 
$\rho(\gamma = h_z^{\mathrm{an}}(0),h_z^{\mathrm{an}}(\beta(\gamma)))$ using only 
the Backward branching index $BB_{h^{\mathrm{an}},f}(z)$ or equivalently the 
tuple $S_z$.
Indeed,
suppose 
$$S_z := \{(s_0,e_0,\beta_0),\ldots,(s_{n_1},e_{n_1},\beta_{n_1})\}$$
and
$$T_z := \{(t_0,d_0,\alpha_0),\ldots,(t_{n_1},d_{n_1},\alpha_{n_1})\}.$$
Recall that the set $S_{1z} := \{s_1,\ldots,s_{n_1}\}$
is definable uniformly in the parameter $z$.
Let $i(\gamma)$ be such that 
$\beta(\gamma) \in [s_{i(\gamma)},s'_{i(\gamma) + 1})$. 
Using that $S_{1z}$ is uniformly definable, we deduce that 
$\gamma \mapsto i(\gamma)$ is definable. 
Then by 
 Lemma \ref{T_x and S_x are intercalculable}, 
 \begin{align} 
 \rho(\gamma,h_z^{\mathrm{an}}(\beta(\gamma))) &= t_{i(\gamma) + 1} + (1/d_{i(\gamma)})(\beta(\gamma) - s_{i(\gamma)+1}) \\
 &= [(1/d_{i(\gamma)+1})(s_{i(\gamma)+1} - s_{n_x + 1}) - (1/d_{i(\gamma)+1})(\beta_{m_x} - \beta_{i(\gamma)+1})]  + (1/d_{i(\gamma)})(\beta(\gamma) - s_{i(\gamma)+1}) 
\end{align}

In the equation above, we have used the fact that 
$$\rho(h_z^{\mathrm{an}}(s_{i(\gamma)+1}),\gamma) = \rho(l_z(t_{i(\gamma)+1}),\gamma)$$
which follows from Equations (3) and (5) in the proof of Lemma \ref{T_x and S_x are intercalculable}. 
By construction, $BB_{h^{\mathrm{an}}_z,f}(z)$ depends only on 
$\gamma = h^{\mathrm{an}}(0,z)$. 
By Lemma \ref{T_x and S_x are intercalculable}, $T_z$ depends only
on $h^{\mathrm{an}}_z(0)$.

Let 
\begin{align*} 
p_X : \Sigma &\to [0,\infty] \\
          p &\mapsto [(1/d_{i(p)+1})(s_{i(p)+1} - s_{n_x + 1}) - (1/d_{i(p)+1})(\beta_{m_x} - \beta_{i(p)+1})]  + (1/d_{i(p)})(\beta(\gamma) - s_{i(p)+1})
\end{align*}

One checks using the computations above that if 
$x \in C^{\mathrm{an}}$ is a point not of type IV that retracts to $\gamma$ then 
$x \in X$  if and only if $r_X(\gamma) \leq p_X(\gamma)$. 
Given $p \in \Sigma$, let $S_p := S_z$ where 
$z \in C$ is any point that retracts to $p$. By construction, the tuple $S_p$ is well defined. 
Furthermore, $S_p$ varies definably along $\Sigma$. Hence, we have that 
$p_X$ is piecewise affine with rational slopes and parameters in $\mathrm{val}(k^*)$. 

    Note that $X$ is closed
    and the points of type I, II and III are dense in $$\{x \in C^{\mathrm{an}}| |r_\Sigma(x)| \leq p_X(h^{\mathrm{an}}(0,x))\}.$$
    Hence
     it suffices to show that the points of type I, II and III are dense in $X$. 
 If this was not true then we must have that there exists a point $q \in X$ of type IV 
 such that $q$ is isolated. This is impossible since $X$ is path connected and 
 intersects the skeleton $\Sigma$, which does not contain any points of type IV, non trivially. 
\end{proof} 
   
\section{Tameness of families of profile functions}

    Let $\phi : C' \to C$ be a finite morphism of smooth projective irreducible $k$-curves. 
Let
$h' : [0,\infty] \times C'^{\mathrm{an}} \to C'^{\mathrm{an}}$ and 
$h : [0,\infty] \times C^{\mathrm{an}} \to C^{\mathrm{an}}$ be deformation retractions 
such that the images $\Sigma'$ and 
$\Sigma$ 
are skeleta of 
$C'^{\mathrm{an}}$ and $C^{\mathrm{an}}$ respectively. 
 In \S \ref{introduction}, 
 we used the decompositions associated to 
$\Sigma'$ and $\Sigma$ to define 
a function 
$\phi_x^{\mathrm{an}} : (0,\infty] \to (0,\infty]$
for every $x \in C'(k)$. 
Theorem \ref{tameness of profile functions} implies that we can assume the functions 
$\phi_x^{\mathrm{an}}$ are determined by the retraction associated to $h'$.
In other words, we can assume that there exists some data $D_p$ for every $p \in \Sigma'$ such that 
if $x$ retracts to $p$ then $D_p$ determines $\phi_x^{\mathrm{an}}$ completely. 
    Our goal in this section is to prove Theorem \ref{tameness of profile functions in families}
    which apart from being a relative version of Theorem \ref{tameness of profile functions} 
    also implies a finiteness property for a family of morphisms $f_s : X_s \to Y_s$ of smooth projective irreducible $k$-curves 
    parametrized by a quasi-projective $k$-variety $S$.  
    More precisely, we show that there exists a family of tuples $\{(\Sigma'_s \subset X_s, \Sigma_s \subset Y_s)\}_{s \in S}$ 
    such that the family and its associated profile functions 
    are controlled by a finite simplicial complex embedded in $S^{\mathrm{an}}$. 
      Recall that we showed that profile functions can be controlled by skeleta by verifying 
    its model theoretic analogue i.e. Theorem \ref{backward branching is constant along retractions} where 
    we proved that the Backward branching index can be controlled by skeleta. 
     The primary advantage of this perspective is that it
     allows us to
      prove \ref{tameness of profile functions in families} 
    using 
     a relative version of \ref{backward branching is constant along retractions} that
    shows the required finiteness property for 
    the backward branching invariants as they vary in a family. 
    It should be noted that constructing deformation retractions for the analytification of a 
    general higher dimensional variety 
    is extremely hard and the only available tool which allows for flexibility is the
    construction of Hrushovski-Loeser. This was one of the primary motivations behind
    rephrasing the results of Temkin in \S 2. 
    \\

\noindent \emph{Notation}: \label{alternate notation}
For reasons that will be clear from the statement of 
Theorem \ref{tameness of profile functions in families}, 
we introduce the following notation. 
In place of $\phi_x^{\mathrm{an}}$ as introduced above, we write 
$\mathrm{prfl}(\phi^{\mathrm{an}},x)$.
\\

\begin{thm} \label{tameness of profile functions in families}
Let $\alpha_1 : X \to S$ and $\alpha_2 : Y \to S$ be smooth morphisms of quasi-projective $k$-varieties.  
 Let $f : X \to Y$ be a morphism such that for every $s \in S$, 
  $f_s : X_s \to Y_s$ is a finite separable morphism of smooth projective irreducible $k$-curves.
  There exists a deformation retraction 
  $g^{\mathrm{an}} : I \times S^{\mathrm{an}} \to S^{\mathrm{an}}$ whose image $\Sigma(S) \subset S^{\mathrm{an}}$ 
  is homeomorphic to a finite simplicial complex and is such that the following properties hold. 
  \begin{enumerate} 
  \item For every $s \in S(k)$, 
  there exists deformation retractions 
  $$h_s'^{\mathrm{an}} : [0,\infty] \times X_s^{\mathrm{an}} \to X_s^{\mathrm{an}}$$ and 
  $$h_s^{\mathrm{an}} : [0,\infty] \times Y_s^{\mathrm{an}} \to Y_s^{\mathrm{an}}$$
  which satisfy the assertions of Theorem \ref{tameness of profile functions}. More precisely the following hold. 
    \begin{enumerate} 
  \item The images $\Sigma_s' := {h'}_s^{\mathrm{an}}(0,X_s^{\mathrm{an}})$ and $\Sigma_s = h_s^{\mathrm{an}}(0,Y_s^{\mathrm{an}})$ are
   skeleta of the curves $X_s^{\mathrm{an}}$ and $Y_s^{\mathrm{an}}$. 
  \item We make use of the notation introduced above and Theorem \ref{tameness of profile functions}.
  The functions $\mathrm{prfl}(f_s^{\mathrm{an}},-)$ are constant along the fibres of the retraction $h'^{\mathrm{an}}_s(0,-)$.  
  Said otherwise,
  for every $x_1,x_2 \in X_s(k)$,
if  $h_s'^{\mathrm{an}}(0,x_1) = h_s'^{\mathrm{an}}(0,x_2)$ then 
$\mathrm{prfl}(f_s^{\mathrm{an}},x_1) = \mathrm{prfl}(f_s^{\mathrm{an}},x_2)$ (or equivalently
  $T^{f_s}_{x_1} = T^{f_s}_{x_2}$).
   \item There exists a function $T^{f_s} : \Sigma'_s \to \bigcup_{M \in \mathbb{N}} ((0,\infty] \times \mathbb{N})^{M}$ given by 
   $p \mapsto T^{f_s}_p$ where if
   $T^{f_s}_p  = ((t_0 = \infty,d_0,\alpha_0),\ldots,(t_{m_p},d_{m_p},\alpha_{m_p}))$ (cf. \S \ref{length of a definable path} and Theorem \ref{tameness of profile functions} (2) for notation)
  and $x$ retracts to $p$ via $h_s'^{\mathrm{an}}$ then $T^{f_s}_x = T^{f_s}_p$.
  Furthermore, the function $T^{f_s}$ is definable \footnote{By this we mean the following.
We can divide $\Sigma'_s$ into the disjoint union of line segments and points such that 
on each such subspace, the functions $p \mapsto d_i$ and 
$p \mapsto m_p$ are constant while 
$p \mapsto \alpha_i$ is piecewise affine.}
%
 \end{enumerate}  
  \item The tuple $(f_s^{\mathrm{an}} : \Sigma'_s \to \Sigma_s,T^{f_s})$ is constant along the fibres of the retraction of the deformation 
  $g^{\mathrm{an}}$ i.e. if $e$ denotes the end point of the interval $I$ and 
  if $s_1,s_2 \in S(k)$ are such that $g^{\mathrm{an}}(e,s_1) = g^{\mathrm{an}}(e,s_2)$ then 
  $\Sigma'_{s_1} = \Sigma'_{s_1}$ and $\Sigma_{s_1} = \Sigma_{s_1}$. Furthermore, with these identifications 
  $(f_{s_1}^{\mathrm{an}})_{|\Sigma'_{s_1}} = (f_{s_2}^{\mathrm{an}})_{|\Sigma'_{s_2}}$ and 
  $T^{f_{s_1}} = T^{f_{s_2}}$.  
  \end{enumerate} 
\end{thm}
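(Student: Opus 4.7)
The plan is to mimic the strategy of Theorem \ref{tameness of profile functions}, but to work over the base $S$ by using the Hrushovski--Loeser construction twice: once fiberwise on the curves $X_s$ and $Y_s$, and once globally on $S$ itself, with care taken that the combinatorial data assembled fiberwise is uniformly definable in $s$ so that a single deformation retraction of $S^{\mathrm{an}}$ can be chosen to preserve it.

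First I would set up a relative version of the constructions of \S \ref{definable subsets of C} and \S \ref{length of a definable path}. Working locally on $S$, choose a finite $S$-morphism $\pi : Y \to \mathbb{P}^1 \times S$; composing with $f$ gives $\widetilde{f} = \pi \circ f : X \to \mathbb{P}^1 \times S$. Picking a divisor $D \subset \mathbb{P}^1_S$ whose $s$-fibre contains the branch loci of both $\pi_s$ and $\widetilde{f}_s$, the homotopies $\psi_{D_s}$ on $\mathbb{P}^1$ lift uniquely along $\pi_s$ and $\widetilde{f}_s$ to homotopies $h_s$ on $\widehat{Y_s}$ and $h'_s$ on $\widehat{X_s}$. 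Uniqueness of lifts (\cite[Lemma 10.1.1]{HL}) guarantees that these assemble into relative homotopies that are $k$-definable in $s$, and that the resulting skeleta $\Sigma_s$ and $\Sigma'_s$, together with the fiberwise backward branching indices $BB_{h'_s, f_s}$ and $BB_{h_s,\pi_s}$ (together with $BB_{h'_s, \widetilde f_s}$), are uniformly definable over $S$. Local constructions glue via the uniqueness of lifts.

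Second, I would apply the relative analogue of Theorem \ref{backward branching is constant along retractions}: using the same arguments as in its proof (producing finitely many definable functions $\xi_{ij}, \eta_{ij}, \kappa_{ij}$ that encode the tuples $S_x$), I would obtain a $v+g$-continuous function $\alpha : X \to \Gamma_\infty$, pulled back from a function on $\mathbb{P}^1 \times S$ by Remark \ref{treating multiple morphisms}, such that after cutting off fiberwise the resulting $h'_s[\alpha_s]$ makes $BB_{h'_s,f_s}$ (and simultaneously the analogues for $\pi_s$ and $\widetilde{f}_s$) constant along the fibres of the fiberwise retraction. By Lemma \ref{T_x and S_x are intercalculable} applied fiberwise, and by the argument of Theorem \ref{tameness of profile functions} relating $T^{\widetilde f_s}$, $T^{\pi_s}$, and $T^{f_s}$, this gives a definable function $s \mapsto T^{f_s}$ on $\Sigma'_s$ that is uniformly definable in $s$. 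Crucially, the description in Theorem \ref{tameness of profile functions} shows that $\Sigma'_s$, $\Sigma_s$, the induced morphism $\Sigma'_s \to \Sigma_s$, and $T^{f_s}$ are all encoded by finitely many $k$-definable functions $\zeta_1,\dots,\zeta_N$ on $\widehat{S}$ (taking values in appropriate $\Gamma$-internal targets).

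Third, I would invoke the Hrushovski--Loeser construction for the quasi-projective $k$-variety $S$ itself (\cite[\S 11]{HL}) to produce a definable deformation retraction $G : [0,\infty] \times \widehat{S} \to \widehat{S}$ onto a $\Gamma$-internal subset, and then cut it off using the analogue of \cite[Lemma 10.2.3]{HL} in higher dimensions so that the finitely many functions $\zeta_i$ are preserved by the retraction. Descending via $\pi_{k,S}$ yields a continuous deformation retraction $g^{\mathrm{an}} : I \times S^{\mathrm{an}} \to S^{\mathrm{an}}$; the higher-dimensional analogue of Lemma \ref{fibres lemma skeleta} shows that its image $\Sigma(S)$ is homeomorphic to the $\Gamma$-internal image of $G$, which is a finite simplicial complex by the structure theory of $\Gamma$-internal sets. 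Since $g^{\mathrm{an}}$ preserves each $\zeta_i$, the tuple $(f_s^{\mathrm{an}} : \Sigma'_s \to \Sigma_s, T^{f_s})$ is constant on the fibres of $g^{\mathrm{an}}(e,-)$, which is the desired conclusion.

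The main obstacle I expect is the uniformity step in the middle paragraph: the proof of Theorem \ref{backward branching is constant along retractions} produces the cut-off function $\alpha$ for a single morphism between irreducible curves, and transposing this to a family requires that the tuples $S_x = ((s_0,e_0,\beta_0),\dots,(s_{n_x},e_{n_x},\beta_{n_x}))$ remain uniformly definable as $(s,x)$ varies over the total space $X$, and that the definable functions on $\mathbb{P}^1 \times S$ encoding them can be arranged to be pull-backs under $\widetilde f$. A secondary but essential difficulty is the higher-dimensional analogue of \cite[Lemma 10.2.3]{HL} needed to cut off $G$ so as to preserve the $\zeta_i$; while the Hrushovski--Loeser machinery guarantees the existence of $\Gamma$-internal homotopies on $\widehat{S}$, making them respect a prescribed finite family of definable functions is the precise place where the Hrushovski--Loeser flexibility emphasized in the introduction is needed.
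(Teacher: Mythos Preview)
Your proposal follows essentially the same architecture as the paper's proof, which is organized as Proposition~\ref{backward branching uniformly definable} plus Theorem~\ref{tameness of profile functions in families HL spaces} on the Hrushovski--Loeser side, followed by the translation via Lemma~\ref{T_x and S_x are intercalculable} and \cite[Corollary~14.1.6]{HL}. Two places where the paper is more explicit than your sketch are worth noting. First, the step in which the fiberwise data $(\Upsilon'_s,\Upsilon_s,\widehat{\phi_s}|_{\Upsilon'_s},BB_{\cdot})$ is ``encoded by finitely many definable functions on the base'' is not immediate: the skeleta $\Upsilon'_s$ sit inside varying ambient spaces $\widehat{X_s}$, so one first passes to a finite pseudo-Galois cover $S'\to S$ via \cite[Theorem~6.4.2]{HL} to obtain definable embeddings of the relative skeleta into fixed powers $\Gamma_\infty^N$, and only then extracts the controlling functions $\xi_j$ on $S'$ using the argument of \cite[Theorem~6.4.4]{HL}; the deformation is built on $\widehat{S'}$ equivariantly and then descended. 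Second, your ``secondary difficulty'' is not one: \cite[Theorem~11.1.1]{HL} already allows one to impose that a prescribed finite family of definable functions be preserved by the retraction of $\widehat{S'}$, so no separate higher-dimensional cut-off lemma in the style of \cite[Lemma~10.2.3]{HL} is required. Finally, rather than gluing local factorizations through $\mathbb{P}^1_S$, the paper simply works over a finite disjoint decomposition $S=\bigsqcup_i S_i$ provided by Lemma~\ref{Local factorization}.
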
 
\begin{proof} 
  We use Lemma \ref{Local factorization} to deduce that there exists a finite family 
  $\{S_1,\ldots,S_m\}$ of sub-varieties of $S$ such that 
  $S = \sqcup_{1 \leq i \leq m} S_i$ and if $Y_i := Y \times_S S_i$ then
  $(\alpha_2)_{|Y_i}$ factors through a finite morphism 
  $\psi_i : Y_i \to \mathbb{P}^1 \times S_i$. 
  We can now apply 
  Theorem \ref{tameness of profile functions in families HL spaces}. 
  It follows that there exists families of deformation retractions 
  $$\{h'_s : [0,\infty] \times X_s \to \widehat{X_s}\}_{s \in S}$$ and 
  $$\{h_s : [0,\infty] \times Y_s \to \widehat{Y_s}\}_{s \in S}$$
  which are uniformly definable in the parameter $s$
   and for every $s \in S$, the images $\Upsilon_s' := h'_s(0,X_s)$ and $\Upsilon_s = h_s(0,Y_s)$ are
   skeleta of the curves $\widehat{X_s}$ and $\widehat{Y_s}$. 
   Observe from the construction in Theorem \ref{tameness of profile functions in families HL spaces} 
   that there exists a family of deformation retractions 
  $$\{h''_{is} : [0,\infty] \times \mathbb{P}^1 \times \{s\} \to \widehat{\mathbb{P}^1} \times \{s\}\}_{s \in S}$$
   which are uniformly definable in the parameter $s$
   and such that for every $s \in S$,
   $h'_s$ and $h_s$ are the unique lifts of $h''_s$. 
   As in \S \ref{homotopies of C^{an}},
   we deduce that for every $s \in S(k)$,
    there exists homotopies  
    $$h_s'^{\mathrm{an}} : [0,\infty] \times X_s^{\mathrm{an}} \to X_s^{\mathrm{an}}$$ and 
  $$h_s^{\mathrm{an}} : [0,\infty] \times Y_s^{\mathrm{an}} \to Y_s^{\mathrm{an}}$$
  whose images $\Sigma_s' := {h'}_s^{\mathrm{an}}(0,X_s^{\mathrm{an}})$ and $\Sigma_s = h_s^{\mathrm{an}}(0,Y_s^{\mathrm{an}})$ are
   skeleta of the curves $X_s^{\mathrm{an}}$ and $Y_s^{\mathrm{an}}$.
   
   Let $s \in S_i$ for some $i \in \{1,\ldots,m\}$.  
   By (1b) of Theorem \ref{tameness of profile functions in families HL spaces}, we have that 
   the functions $BB_{h'_s, \psi_{is} \circ f_s}$ and 
  $BB_{h_s, \psi_{is}}$
   are constant along the fibres of the retractions 
  $h'_s(0,-)$ and $h_s(0,-)$. 
  It follows from Lemma \ref{T_x and S_x are intercalculable} that 
  if 
  $x_1,x_2 \in X_s(k)$
 and $h_s'^{\mathrm{an}}(0,x_1) = h_s'^{\mathrm{an}}(0,x_2)$ then 
  $T^{\psi_{is} \circ f_s}_{x_1} = T^{\psi_{is} \circ f_s}_{x_2}$. Likewise, if 
$y_1,y_2 \in Y_s(k)$,
 and $h_s^{\mathrm{an}}(0,y_1) = h_s^{\mathrm{an}}(0,y_2)$ then 
  $T^{\psi_{is}}_{y_1} = T^{\psi_{is}}_{y_2}$.
  We apply the arguments from the proof of Theorem \ref{tameness of profile functions}
  to get that 
   if 
  $x_1,x_2 \in X_s(k)$
 and $h_s'^{\mathrm{an}}(0,x_1) = h_s'^{\mathrm{an}}(0,x_2)$ then 
  $T^{f_s}_{x_1} = T^{f_s}_{x_2}$.
  
     Let $g : I \times \widehat{S} \to \widehat{S}$ be the deformation retraction as provided by 
     Theorem \ref{tameness of profile functions in families HL spaces}. We have that
     the image of $g$ is a $\Gamma$-internal set.
     By \cite[Corollary 14.1.6]{HL}, 
       the deformation retraction $g$ induces a deformation retraction 
       $g^{\mathrm{an}} :  I \times {S}^{\mathrm{an}} \to {S}^{\mathrm{an}}$.
       The image $\Sigma(S)$ of $g^{\mathrm{an}}$ is homeomorphic to 
       a finite simplicial complex. 
       By (2) of Theorem \ref{tameness of profile functions in families HL spaces}, Lemma \ref{fibres lemma skeleta} and
        Lemma \ref{T_x and S_x are intercalculable} we deduce
        part (2). This concludes the proof. 
%

\end{proof}

\begin{lem} \label{Local factorization}
    Let $f : V' \to V$ be a projective morphism
    of $k$-varieties 
     such that the fibres of $f$ are pure of dimension $m$. 
    For every $v \in V(k)$, there exists a Zariski open neighbourhood $U \subset V$ of $v$ such that 
    the morphism $f : f^{-1}(U) \to U$ factors through a finite $k$-morphism 
    $p : f^{-1}(U) \to \mathbb{P}^m \times U$. 
    If $u \in U$ is such that $V'_u$ is generically reduced then the morphism 
    $p_u : V'_u \to \mathbb{P}^m \times \{u\}$ is generically étale. 
    \end{lem}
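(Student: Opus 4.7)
The plan is to carry out a Noether-normalization style argument for $f$ in a Zariski neighbourhood of $v$, via a suitably chosen linear projection. Since $f$ is projective, I first fix a closed immersion $V' \hookrightarrow \mathbb{P}^N \times V$ over $V$ for some $N$. The fibre $V'_v \subset \mathbb{P}^N$ is then a projective subscheme of pure dimension $m$, and the goal becomes producing a linear projection centre $L \subset \mathbb{P}^N$ of codimension $m+1$ that works not only at $v$ but over an entire neighbourhood.

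I would then choose $L$ subject to two open conditions in the Grassmannian $G$ of codimension-$(m+1)$ linear subspaces of $\mathbb{P}^N$: (i) $L \cap V'_v = \emptyset$, which is non-empty by a dimension count, and (ii) the restricted projection $V'_v \to \mathbb{P}^m$ is generically \'etale on each generically reduced irreducible component of $V'_v$, which is the classical statement that a sufficiently general linear projection is generically \'etale (valid in any characteristic). Letting $\pi_L \colon \mathbb{P}^N \setminus L \to \mathbb{P}^m$ be the associated projection, condition (i) guarantees that $\pi_L|_{V'_v}$ has finite fibres, since fibres of $\pi_L$ are $(N-m)$-planes and $V'_v \cap L = \emptyset$.

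Next I would spread the construction over a neighbourhood. The subscheme $V' \cap (L \times V) \subset V'$ is closed, and by properness of $f$ its image $Z \subset V$ is closed; by construction $v \notin Z$, so $U := V \setminus Z$ is an open neighbourhood of $v$ on which the composition $p = (\pi_L \times \mathrm{id}_U) \circ \iota$ defines a morphism $p \colon f^{-1}(U) \to \mathbb{P}^m \times U$ of $U$-schemes. Then $p$ is proper (factorisation of the proper $f|_{f^{-1}(U)}$ through the separated $\mathbb{P}^m \times U \to U$), and the fibre $p_v$ is quasi-finite, hence finite. By upper semi-continuity of fibre dimension for proper morphisms, after shrinking $U$ once more I may assume $p$ has finite fibres throughout, so the proper quasi-finite $p$ is finite, as required.

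The main obstacle is the final clause: ensuring that the \emph{same} $L$ yields a generically \'etale map $p_u$ on \emph{every} generically reduced fibre $V'_u$, $u \in U$, rather than only for $u = v$ or on a generic subset. I would handle this by analysing the ramification locus of $p$ as a closed subset of $f^{-1}(U)$: its image in $U$ is constructible, so (shrinking $U$ if necessary) either its intersection with a generically reduced fibre $V'_u$ is a proper closed subset of $V'_u$, giving generic \'etaleness, or it contains the whole fibre, in which case specialisation from $v$ combined with the generic choice of $L$ in step (ii) yields a contradiction. This is the delicate part; in characteristic zero it is automatic from generic reducedness and equidimensionality, but in positive characteristic one really needs the generic projection argument and the constructibility of the \'etale locus to transport the property from the fibre at $v$ to all generically reduced fibres over $U$.
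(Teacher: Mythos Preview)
Your factorisation argument is correct and parallels the paper's closely. The only difference is packaging: you project from a codimension-$(m+1)$ linear centre $L$ in one step, while the paper argues by minimality, taking the least $n \geq m$ for which a finite factorisation $V'_U \to \mathbb{P}^n \times U$ exists over some neighbourhood of $v$, and if $n > m$ projecting from any $k$-point $z \notin i(V'_v)$ to produce a finite factorisation through $\mathbb{P}^{n-1}$ over a smaller neighbourhood, contradicting minimality. Your detour through upper semi-continuity of fibre dimension is unnecessary but harmless: the direct argument you give at $v$ (a projective subscheme disjoint from $L$ meets each fibre of $\pi_L$, an affine space of dimension $N-m$, in a finite set) works verbatim for every $u \in U$.

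The gap is exactly where you flagged it. For the final clause the paper makes no careful choice of centre and gives no ramification-locus argument; it simply invokes the argument of \cite[\S 2.11]{dJ96}. Your sketch does not close: condition (ii) only controls the projection on the generically reduced components of the single fibre $V'_v$, so it gives no handle on fibres $V'_u$ at points $u$ unrelated to $v$ by specialisation, and if $V'_v$ itself fails to be generically reduced then (ii) is vacuous and you have no anchor at all. Nor does constructibility of the bad locus in $U$ let you shrink $U$ to excise it while keeping $v$: a constructible set avoiding the closed point $v$ can still be Zariski-dense (e.g.\ a nonempty open in $\mathbb{A}^2$ missing the origin), so no open neighbourhood of $v$ avoids it. You are right that the issue disappears in characteristic zero; in positive characteristic the point is that the centre must be chosen generically \emph{over the base} rather than pointwise at $v$, which is what the de Jong reference supplies.
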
      
     \begin{proof} 
     This is a relative version of \cite[Lemma 11.2.1]{HL}. 
     Let $v \in V$.
        Let 
      $n \in \mathbb{N}$ be the smallest natural number greater than or equal to $m$ such that there exists a 
      Zariski open neighbourhood $U$ of $v$ and the 
       morphism 
      $f$ factors through
       a finite morphism
      $i : V'_U \rightarrow \mathbb{P}^n \times U$.
      The fact that there exists such an $n$ is because 
      $f$ is projective. 
       Let $V'_v$ denote the fibre over $v$. If 
      $m = n$ then we have nothing to prove. Suppose $n > m$. Let 
      $(z,v) \in (\mathbb{P}^n \times V)(k)$ be a point that is not contained in $i(V'_v)$.
      Let $C := i(V') \cap (\{z\} \times V) \subset \mathbb{P}^n \times V$.
      Observe that $C$ is a closed subset of $\mathbb{P}^n \times V$ and hence $p_2(C)$ 
      is a closed subset of $V$ 
      where 
      $p_2$ is the projection $\mathbb{P}^n \times V \to V$.
       Furthermore, our choice of $z \in V$ implies that $v \notin p_2(C)$. Let
      $U$ denote the Zariski open set which is the complement of 
      $C$ in $V$.
       By construction, for every $u \in U$, $(z,u) \notin i(V'_u)$.
       Let $p : \mathbb{P}^n \smallsetminus \{z\} \to \mathbb{P}^{n-1}$
       denote the projection through the point $z$. It follows that the map 
       $p \times \mathrm{id} : (\mathbb{P}^n \smallsetminus \{z\}) \times U \to \mathbb{P}^{n-1} \times U$
       restricts to a finite morphism $i_1 : V'_U \to \mathbb{P}^{n-1} \times U$. 
       This contradicts our assumption that $n$ was minimally chosen. 
        If $u \in U$ is such that $V'_u$ is generically reduced then 
        the argument in
        \cite[\S 2.11]{dJ96} shows that $p_u : V'_u \to \mathbb{P}^m \times \{u\}$ is generically étale. 
         \end{proof}

\subsection{Theorem \ref{tameness of profile functions in families} for Hrushovski-Loeser spaces} 

     We prove a relative version of Theorem \ref{backward branching is constant along retractions}.  

\begin{prop} \label{backward branching uniformly definable} 
  Let $S$ be a quasi-projective $k$-variety and $Y := \mathbb{P}^1 \times S$. 
  Let $\alpha_1 : X \to S$ and $\alpha_2 : Y \to S$ be smooth morphisms of
   quasi-projective $k$-varieties.  
  Let $f : X_1 \to X_2$ be an $S$-morphism such that for every $s \in S$, 
  $f_s : X_{1s} \to X_{2s}$ is a finite morphism of smooth projective irreducible $k$-curves.
  Let $g : X_2 \to Y$ be an $S$-morphism such that for every $s \in S$, 
  $g_s :  X_{2s} \to Y_s$ is a finite morphism. 
 There exists a uniformly definable family 
 $\{h_s : [0,\infty] \times \mathbb{P}^1 \to \widehat{\mathbb{P}^1}\}_{s \in S}$ of deformation retractions 
 where for every $s \in S$, $h_s$ is $k(s)$-definable and the family 
satisfies the following properties. 
 \begin{enumerate}
  \item For every $s \in S$, the image of the retraction associated to $h_s$ is a $\Gamma$-internal subset of 
  $\widehat{\mathbb{P}^1} \times \{s\}$. 
  \item For every $s \in S$, the deformation retraction $h_s$ lifts uniquely to deformation retractions
  $h'_{1s} : [0,\infty] \times X_{1s} \to \widehat{X_{1s}}$ and 
  $h'_{2s} : [0,\infty] \times X_{2s} \to \widehat{X_{2s}}$
  \item For every $s \in S$, the functions $$BB_{h'_{2s},g_s} : X_{2s} \to \mathrm{Fn}([0,\infty], \mathbb{N} \times [0,\infty])$$
  and 
  $$BB_{h'_{1s},g_s \circ f_s} : X_{1s} \to \mathrm{Fn}([0,\infty], \mathbb{N} \times [0,\infty])$$
   (cf. \S \ref{backward branching index})
  are constant along the fibres of the retraction associated to
  $h'_{2s}$ and $h'_{1s}$ respectively.  
  \end{enumerate}  
\end{prop}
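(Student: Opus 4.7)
The plan is to run the proof of Theorem \ref{backward branching is constant along retractions} uniformly in the parameter $s \in S$, leveraging the fact that all ingredients of the fibrewise argument can be organized into uniformly definable families. The uniformity is not automatic and has to be tracked through each step of the construction.

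First, for each $s \in S$ I would construct a divisor $D_s \subset \mathbb{P}^1 \times \{s\}$ such that $\psi_{D_s}$ lifts uniquely via $g_s \circ f_s$ and $g_s$ to definable homotopies on $X_{1s}$ and $X_{2s}$ whose images are $\Gamma$-internal. By the Hrushovski--Loeser construction recalled in \S \ref{homotopies of C}, a sufficient such divisor can be built from the branch locus and related finite data of the morphisms $g_s\circ f_s$ and $g_s$. Since $f$ and $g$ are $S$-morphisms and $S$ is a $k$-variety, these loci vary in uniformly definable families over $S$, so the family $\{D_s\}_{s \in S}$ can be chosen uniformly definable. At this stage one already has a uniformly definable family of homotopies $\{\psi_{D_s}\}$ on $\mathbb{P}^1$ whose fibrewise lifts $h'_{1s}$ and $h'_{2s}$ exist and are unique, taking care of conditions (1) and (2).

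Second, I would apply \cite[Lemma 10.2.3]{HL} fibrewise to obtain, for each $s$, finitely many definable functions $\{\xi'_{i,s} : \mathbb{P}^1 \to \Gamma_\infty\}$ and an enlargement $D'_s$ of $D_s$ such that any homotopy on $X_{1s}$ (respectively $X_{2s}$) that lifts a homotopy on $\mathbb{P}^1$ fixing $D'_s$ and preserving the $\xi'_{i,s}$ automatically preserves both $BB_{h'_{1s},g_s \circ f_s}$ and $BB_{h'_{2s},g_s}$. Since Lemma 10.2.3 is itself formulated in first-order definable terms and the inputs ($f$, $g$) are uniformly definable in $s$, the family $\{\xi'_{i,s}, D'_s\}$ is uniformly definable. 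Then, exactly as in the second half of the proof of Theorem \ref{backward branching is constant along retractions}, I would introduce the auxiliary definable functions $\xi$, $\xi_{ij}$, $\eta_{ij}$, $\kappa_{ij}$ encoding the tuples $S_x$ that determine the backward branching indices, and produce a $v+g$-continuous cut-off function $\alpha_s : \mathbb{P}^1 \to \Gamma_\infty$ (obtained by maxing a cut-off built on $X_{1s}$ with one built on $X_{2s}$, in the spirit of Remark \ref{treating multiple morphisms}) such that $h_s := \psi_{D_s}[\alpha_s]$ satisfies condition (3). Since $\{BB_{h'_{1s},g_s \circ f_s}(x)\}_{(x,s)}$ and $\{BB_{h'_{2s},g_s}(y)\}_{(y,s)}$ are uniformly definable, so are the encoding functions and hence $\alpha_s$.

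The principal obstacle is verifying that the Hrushovski--Loeser construction of \cite[\S 7.5]{HL} and Lemma 10.2.3 in loc.\ cit., which are stated and proved for a single curve, actually yield uniformly definable output when the morphisms are allowed to vary in a definable family over $S$. This amounts to checking that every choice in those proofs is expressible by an $\mathcal{L}$-formula with parameters in $S$, or invoking the general principle that first-order definable constructions transfer uniformly over a definable base. A secondary technical difficulty is that the numerical invariants attached to fibres --- notably the integer $n_x$ bounding the length of $S_x$ and the integer $N_s := \max\{|g_s^{-1}(y)| : y \in Y_s\}$ --- may vary with $s$; one needs a uniform bound, supplied by generic finiteness of $g$ and $g \circ f$, in order to list the finitely many encoding functions $\xi_{ij}, \eta_{ij}, \kappa_{ij}$ definably in $s$. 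Once these uniformity issues are settled, verifying conditions (1)--(3) fibrewise is a direct application of the fibrewise theorem.
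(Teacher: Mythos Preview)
Your proposal is correct and follows essentially the same strategy as the paper: run the fibrewise argument of Theorem \ref{backward branching is constant along retractions} uniformly in $s$, producing an initial uniform family of homotopies that lift uniquely, then cutting off by a uniformly definable $v+g$-continuous function $\alpha_s$ obtained from the encoding functions $\xi,\xi_{ij},\eta_{ij},\kappa_{ij}$.

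The one point worth flagging is how the paper dispatches what you call the ``principal obstacle''. Rather than verifying by hand that the constructions of \cite[\S 7.5]{HL} and \cite[Lemma 10.2.3]{HL} are uniform in $s$, the paper invokes two ready-made relative results from \cite{HL}: Lemma 11.3.1 in loc.\ cit.\ supplies the constructible $D \subset Y$ with $D_s$ finite and containing the forward branching points of $\widehat{g_s}$ and $\widehat{g_s \circ f_s}$ (this is exactly the uniform divisor you want in your first step), and Theorem 11.7.1 in loc.\ cit.\ directly furnishes the uniformly definable family $\{h_{0s}\}$ of homotopies on $\mathbb{P}^1$ lifting uniquely to $X_{1s}$ and $X_{2s}$ with $\Gamma$-internal images. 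Once those two citations are in place, the uniformity of the cut-off step follows, as you say, from the fact that the backward branching data $\{BB_{h'_{01s},g_s\circ f_s}(x)\}$ and $\{BB_{h'_{02s},g_s}(y)\}$ are uniformly definable in $(x,s)$ and $(y,s)$; the paper then pushes the resulting $\zeta$-functions down to $\mathbb{P}^1$ exactly as in the proof of Theorem \ref{radiality of definable sets}, and the uniform bound on $N_s$ you mention is subsumed in the uniform definability of these families.
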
 
\begin{proof} 
  The proof of \cite[Lemma 11.3.1]{HL} shows that there exists a constructible set $D \subset Y$ such that for 
  every $s \in S$, $D_s$ is finite, $k(s)$-definable and 
  its convex hull $\Upsilon_{D_s} \subset \widehat{\mathbb{P}^1} \times \{s\}$ 
  contains the set of forward branching points of the morphism 
  $\widehat{g_s}$ and $\widehat{g_s \circ f_s}$.  
  We fix a definable metric $m : \mathbb{P}^1 \times \mathbb{P}^1 \to \Gamma_\infty$ as in 
  \cite[\S 3.10]{HL}. 
  Let $\kappa : Y \to \Gamma_{\infty}$ be the function 
  $(y,s) \mapsto m_{D_s}(y)$ where $m_{D_s}$ is as defined 
  in \S \ref{homotopies of C}.
  
   By \cite[\S 7.4]{HL}, if $s \in S$ then any homotopy on $\widehat{\mathbb{P}^1} \times \{s\}$ that 
  respects the function $m_{D_s}$ lifts uniquely to homotopies on 
  $\widehat{X_{1s}}$ and $\widehat{X_{2s}}$.
  By \cite[Theorem 11.7.1]{HL}, there exists a uniformly definable family
   $\{h_{0s} : [0,\infty] \times \mathbb{P}^1 \to \widehat{\mathbb{P}^1}\}_{s \in S}$ 
  of deformation retractions such that for every $s \in S$, 
  $h_{0s}$ lifts uniquely to deformation retractions 
  $h'_{01s} : [0,\infty] \times X_{1s} \to \widehat{X_{1s}}$
  and 
  $h'_{02s} : [0,\infty] \times X_{2s} \to \widehat{X_{2s}}$
   whose images are $\Gamma$-internal. 
  
  Let $s \in S$. 
  In Theorem \ref{backward branching is constant along retractions}, we showed that 
  there exists a finite family of definable functions $\{\zeta_{1s} : X_{1s} \to \Gamma_\infty,\ldots,\zeta_{m_ss} : X_{1s} \to \Gamma_\infty\}$ 
  such that if $\alpha_{1s}  : X_{1s} \to [0,\infty]$ is $v+g$-continuous and $h'_{01s}[\alpha_s]$ preserves the levels of the functions
  $\zeta_{js}$ then the function $BB_{h'_{01s}[\alpha_{1s}],g_s \circ f_s}$ is constant along the retraction associated to 
  $h'_{01s}[\alpha_{1s}]$. Similarly, there exists a finite family of definable functions $\{\zeta'_{1s} : X_{2s} \to \Gamma_\infty,\ldots,\zeta'_{m'_ss} : X_{2s} \to \Gamma_\infty\}$ 
  such that if $\alpha_{2s}  : X_{2s} \to [0,\infty]$ is $v+g$-continuous and $h'_{02s}[\alpha_{2s}]$ preserves the levels of the functions
  $\zeta'_{js}$ then the function $BB_{h'_{02s}[\alpha],g_s}$ is constant along the retraction associated to 
  $h'_{02s}[\alpha_{2s}]$

   Since the families $\{h'_{01s}\}_{s \in S}$ and $\{h'_{02s}\}_{s \in S}$ are uniform,
 we can take the functions 
   $\{\zeta_{1s},\ldots,\zeta_{m_ss}\}$ and 
   $\{\zeta'_{1s},\ldots,\zeta'_{m'_ss}\}$  to be uniform in $s$. 
   The arguments in the proof 
   of Theorem \ref{radiality of definable sets} shows that for every 
   $s$, there exists definable functions 
   $\{a_{1s} : \mathbb{P}^1 \to \Gamma_\infty, \ldots, a_{n_ss} : \mathbb{P}^1 \to \Gamma_\infty\}$ and 
   $\{b_{1s} : \mathbb{P}^1 \to \Gamma_\infty, \ldots, b_{n'_ss} : \mathbb{P}^1 \to \Gamma_\infty\}$
   such that if $\alpha_s  : \mathbb{P}^1 \to [0,\infty]$ is
    $v+g$-continuous and 
   $h_{0s}[\alpha_s]$ preserves the levels of the functions
  $a_{js}$ and
  $b_{ts}$ then
  the lifts $h'_{2s} := h'_{02s}[\alpha_s \circ g_s]$
  and $h'_{1s} := h'_{01s}[\alpha_s \circ g_s \circ f_s]$
   preserve the 
  levels of the functions $\zeta_{is}$ and $\zeta'_{ts}$ and consequently $BB_{h'_{2s},g_s}$
  and $BB_{h'_{1s},g_s \circ f_s}$ 
   will be constant along the 
  fibres of the retractions
  $h'_{1s}$ and $h'_{2s}$ respectively.
   Since the given data is uniform in the parameter $s$, we 
  get that the functions $a_{1s},\ldots,a_{n_ss},b_{1s},\ldots,b_{n'_ss}$
   are uniformly definable as well
  and as a consequence, we can suppose that the family 
  $\alpha_{s}$ is uniformly definable in the parameter $s$. 
  The homotopies 
  $h_s := h_{0s}[\alpha_s]$, $h'_{1s}$ and 
  $h'_{2s}$ satisfy the assertions of the proposition. 
  \end{proof} 
  
  \begin{rem} \label{backward branching extends to the skeleton}
  \emph{Let $f : C' \to C$ be a finite morphism between smooth irreducible projective $k$-curves. 
    Let 
    $h' : [0,\infty] \times C' \to \widehat{C'}$ be a homotopy whose image $\Upsilon$
    is $\Gamma$-internal and 
    the function 
    $BB_{h',f}$ is constant along the fibres of the retraction 
    $h'(0,-)$. 
    In this case, we can define 
    $$BB_{h',f} : \Upsilon \to \mathrm{Fn}([0,\infty],\mathbb{N} \times [0,\infty])$$
     where $\mathrm{Fn}([0,\infty],\mathbb{N} \times [0,\infty])$ is the 
     set of definable functions from $[0,\infty]$ to $\mathbb{N} \times [0,\infty]$. 
   Let $p \in \Upsilon$ and $x \in C$ be a point that retracts to $p$. 
   We define $BB_{h',f}(p) := BB_{h',f}(x)$. 
   Our assumption that 
   $BB_{h',f}$ is constant along the fibres of the retraction associated 
   to $h'$ implies that the function on $\Upsilon$ is well defined.}
    \end{rem}

  \begin{thm} \label{tameness of profile functions in families HL spaces}
Let $S$ be a quasi-projective $k$-variety. 
Let $\phi_1 : X \to S$ and $\phi_2 : Y \to S$ 
be projective morphisms of quasi-projective varieties.
 Let $\phi : X \to Y$ be a morphism such that for every $s \in S$, 
  $\phi_s : X_s \to Y_s$ is a finite morphism of smooth projective irreducible $k$-curves.
  Let $\{S_1,\ldots,S_m\}$ be a family of disjoint sub-varieties of $S$ such that 
  $S = \sqcup_{1 \leq i \leq m} S_i$ and 
  for every $i$, there exists a finite morphism 
  $\psi_i : Y_i := Y \times_S S_i \to \mathbb{P}^1_{S_i}$. 
  
   We then have that there exists a deformation retraction 
  $g : I \times {S} \to \widehat{S}$ whose image $\Upsilon(S) \subset \widehat{S}$ 
  is $\Gamma$-internal and is such that the following properties hold. 
  \begin{enumerate} 
  \item There exists families of $k(s)$-definable deformation retractions 
  $$\{h'_s : [0,\infty] \times X_s \to \widehat{X_s}\}_{s \in S}$$ and 
  $$\{h_s : [0,\infty] \times Y_s \to \widehat{Y_s}\}_{s \in S}$$
  which are uniformly definable in the parameter $s$
   and
  which satisfy the following conditions.
    \begin{enumerate} 
  \item For every $s \in S$, the images $\Upsilon_s' := h'_s(0,X_s)$ and $\Upsilon_s = h_s(0,Y_s)$ are
   skeleta of the curves $\widehat{X_s}$ and $\widehat{Y_s}$. 
  \item For every $i \in \{1,\ldots,m\}$ and $s \in S_i$, the functions $BB_{h'_s, \psi_{is} \circ \phi_s}$ and 
  $BB_{h_s, \psi_{is}}$
   are constant along the fibres of the retractions 
  $h'_s(0,-)$ and $h_s(0,-)$.  
   \end{enumerate}  
  \item The tuple $(\widehat{\phi_s}_{|\Upsilon'_s} : \Upsilon'_s \to \Upsilon_s, (BB_{h'_s, \psi_{is} \circ \phi_s})_{|\Upsilon'_s}, (BB_{h_s, \psi_{is}})_{|\Upsilon_s})$ (cf. Remark \ref{backward branching extends to the skeleton})
    is constant along the fibres of the retraction of the deformation 
  $\widehat{g}$ i.e. if $e$ denotes the end point of the interval $I$ and 
  if $s_1,s_2 \in S$ are such that $g(e,s_1) = g(e,s_2)$ then 
  $\Upsilon'_{s_1} = \Upsilon'_{s_1}$ and $\Upsilon_{s_1} = \Upsilon_{s_1}$. Furthermore, with these identifications 
  \begin{align*} 
  (\widehat{\phi_{s_1}})_{|\Upsilon'_{s_1}} &= (\widehat{\phi_{s_2}})_{|\Upsilon'_{s_2}} \\
  (BB_{h'_{s_1}, \psi_{is_1} \circ \phi_{s_1}})_{|\Upsilon'_{s_1}} &= (BB_{h'_{s_2}, \psi_{is_2} \circ \phi_{s_2}})_{|\Upsilon'_{s_2}}  \\
    (BB_{h_{s_1}, \psi_{is_1}})_{|\Upsilon_{s_1}} &= (BB_{h_{s_2}, \psi_{is_2}})_{|\Upsilon_{s_2}}
    \end{align*}
  \end{enumerate} 
\end{thm}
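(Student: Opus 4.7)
The plan is to combine the fibrewise construction produced by Proposition \ref{backward branching uniformly definable} with a global deformation of $\widehat{S}$ obtained via Hrushovski--Loeser's relative homotopy machinery. The argument breaks into two stages: first, build the uniformly definable families $\{h'_s\}_{s \in S}$ and $\{h_s\}_{s \in S}$ of part (1); second, construct the base deformation $g$ on $\widehat{S}$ that forces the tuple in part (2) to be constant along its retraction.

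For the first stage, on each stratum $S_i$ of the given partition I would apply Proposition \ref{backward branching uniformly definable} to the $S_i$-morphisms obtained by restriction, with $\psi_i : Y_i \to \mathbb{P}^1 \times S_i$ playing the role of the morphism ``$g$'' in that proposition and the restriction $\phi_{|X \times_S S_i}$ playing the role of ``$f$''. This yields uniformly $s$-definable families $\{h''_{is}\}_{s \in S_i}$ of homotopies on $\mathbb{P}^1$ with $\Gamma$-internal images, lifting uniquely via $\psi_{is}$ and $\psi_{is} \circ \phi_s$ to families $\{h_{is}\}_{s \in S_i}$ on $\widehat{Y_s}$ and $\{h'_{is}\}_{s \in S_i}$ on $\widehat{X_s}$, for which $BB_{h_{is},\psi_{is}}$ and $BB_{h'_{is},\psi_{is}\circ\phi_s}$ are constant along the fibres of the respective retractions. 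Assembling across the finitely many strata gives the families required in part (1); the images $\Upsilon'_s$ and $\Upsilon_s$ are skeleta of $\widehat{X_s}$ and $\widehat{Y_s}$ because the images of $h''_{is}$ are $\Gamma$-internal and the lifts are unique.

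For the second stage the crucial observation is that the uniform definability of the above families implies that the collection of tuples
\[
\bigl(\widehat{\phi_s}_{|\Upsilon'_s},\; BB_{h'_s,\psi_{is}\circ\phi_s}|_{\Upsilon'_s},\; BB_{h_s,\psi_{is}}|_{\Upsilon_s}\bigr), \qquad s \in S,
\]
forms a definably parametrised object over $\widehat{S}$. Using Remarks \ref{definability of backward branching index} and \ref{backward branching extends to the skeleton}, this data can be encoded by a finite collection of definable functions $\xi_1,\ldots,\xi_N : \widehat{S} \to \Gamma_\infty$ recording the breakpoints $s_i$, the multiplicities $e_i$ and the mass terms $\beta_i$ of the tuples $S_x$ on each skeleton, together with the combinatorial type of the morphism $\widehat{\phi_s}_{|\Upsilon'_s}$. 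I would then invoke the Hrushovski--Loeser deformation theorem for quasi-projective bases (in the spirit of \cite[Theorem 11.7.1]{HL}) applied to $\widehat{S}$ with the family $\{\xi_i\}$ as prescribed data. This produces the desired $g : I \times \widehat{S} \to \widehat{S}$ whose image $\Upsilon(S)$ is $\Gamma$-internal and along whose retraction every $\xi_i$ is constant; by construction the entire tuple is then constant along the fibres of $g$, giving part (2).

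The main obstacle I expect lies in the encoding step of the second stage. The invariants to be preserved are not scalar functions but rather a varying morphism of $\Gamma$-internal skeleta together with backward branching tuples whose length depends on $s$, and one must translate all of this into finitely many $\Gamma_\infty$-valued functions on $\widehat{S}$ to which a deformation-preservation statement applies. The finiteness of the values taken by $BB_{h',g}$ and the uniform definability in $x$ of the tuple $S_x$ (from Remark \ref{definability of backward branching index}) make this plausible, but ensuring the encoding is uniform across the strata $S_i$ and that preservation of the $\xi_i$ genuinely forces the whole tuple to be constant will require the same careful bookkeeping that appeared in the proof of Theorem \ref{backward branching is constant along retractions}.
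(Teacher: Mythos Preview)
Your overall architecture matches the paper's: invoke Proposition \ref{backward branching uniformly definable} stratum by stratum to obtain the uniformly definable families $\{h'_s\}$, $\{h_s\}$ of part (1), then encode the resulting data (skeleta, the maps $\widehat{\phi_s}$ between them, and the backward branching indices on them) by finitely many $\Gamma_\infty$-valued functions on the base and apply the Hrushovski--Loeser main homotopy theorem to force these functions to be constant along a retraction.

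The gap is exactly in the encoding step you flag as the main obstacle, and it is a real one. As you have set things up there is no way to express ``$\Upsilon'_{s_1} = \Upsilon'_{s_2}$'' or ``$(\widehat{\phi_{s_1}})_{|\Upsilon'_{s_1}} = (\widehat{\phi_{s_2}})_{|\Upsilon'_{s_2}}$'' as equality of finitely many $\Gamma_\infty$-values of functions on $S$: the fibrewise skeleta $\Upsilon'_s \subset \widehat{X_s}$ live in different curves and have no common ambient $\Gamma_\infty^N$ in which to compare them. The paper resolves this by first invoking \cite[Theorem 6.4.2]{HL}: there is a finite pseudo-Galois cover $\alpha : S' \to S$ and definable maps $\lambda' : X \times_S S' \to S' \times \Gamma_\infty^N$ and $\lambda : Y \times_S S' \to S' \times \Gamma_\infty^M$ whose hat-extensions embed the pulled-back relative skeleta $\Upsilon' \times_S S'$ and $\Upsilon \times_S S'$ homeomorphically, fibre by fibre, into $\Gamma_\infty^N$ and $\Gamma_\infty^M$. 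Only after this identification can one run the argument of \cite[Theorem 6.4.4]{HL} to produce finitely many definable $\xi_j : S' \to \Gamma_\infty$ whose values at $s$ determine $\Upsilon'_{1s}$, $\Upsilon_{1s}$, the induced map between them, and the pulled-back $BB$ functions. One then applies \cite[Theorem 11.1.1]{HL} (the main homotopy theorem, not 11.7.1, which is the relative curve construction you already used in stage one) to $S'$, requiring the $\xi_j$ to be preserved \emph{and} the homotopy to be $\mathrm{Aut}(S'/S)$-equivariant, so that it descends to the desired $g$ on $S$. Without the pseudo-Galois cover and the uniform embedding it provides, your encoding functions $\xi_i$ have no ground to stand on; and even once you have them, the descent from $S'$ back to $S$ is an additional step you have not mentioned.
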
 
\begin{proof} 
 By applying Proposition \ref{backward branching uniformly definable}
 to $X \times_S S_i \to Y_i \to \mathbb{P}^1_{S_i}$ for every $i \in \{1,\ldots,m\}$, 
  we deduce that there exists uniformly definable families 
  $\{h'_s : [0,\infty] \times X_s \to \widehat{X_s}\}_{s \in S}$
and $\{h_s : [0,\infty] \times Y_s \to \widehat{Y_s}\}_{s \in S}$ 
which satisfy conditions 1(a) and 1(b) above.
In particular, we see that 
$\Upsilon' := \bigcup_s \Upsilon'_s$ and 
$\Upsilon := \bigcup_s \Upsilon_s$ are iso-definable and relatively $\Gamma$-internal subsets of 
$\widehat{X/S}$ and $\widehat{Y/S}$ respectively.
By \cite[Theorem 6.4.2]{HL}, there exists a finite pseudo-Galois covering 
$\alpha : S' \to S$ such that if $X' := X \times_S S'$ and 
$Y' := Y \times_S S'$ then there exists definable morphisms 
$\lambda' : X' \to S' \times \Gamma_\infty^N$ for some $N \in \mathbb{N}$ and
$\lambda : Y' \to S' \times \Gamma_\infty^M$ for some $M \in \mathbb{N}$ such that 
$\widehat{\lambda'} : \widehat{X'} \to \widehat{S'} \times \Gamma_\infty^N$ and 
$\widehat{\lambda} : \widehat{Y'} \to \widehat{S'} \times \Gamma_\infty^M$
are continuous morphisms that are injective when restricted to 
$\Upsilon'_1 := \Upsilon' \times_S S'$ and $\Upsilon_1 := \Upsilon \times_S S'$. 
Observe that for every $s \in S$, $\Upsilon'_s$ and $\Upsilon_s$ are 
definably compact since they are the images via continuous maps of $\widehat{X_s}$ and $\widehat{Y_s}$  
which are definably compact. 
Since $S' \to S$ is a finite morphism, we deduce that for every $s \in S'$,
$\Upsilon'_{1s}$ and $\Upsilon_{1s}$ are definably compact. 
It follows that for every $s \in S'$, 
$\widehat{\lambda'_s}$ and $\widehat{\lambda_s}$ 
restrict to homeomorphisms from $\Upsilon'_{1s}$ and 
$\Upsilon_{1s}$ onto their images in $\{s\} \times \Gamma_\infty^N$ and 
$\{s\} \times \Gamma_\infty^M$. 
Identifying $\Upsilon'_{1s}$ and $\Upsilon_{1s}$ with their images via $\lambda'_s$ and 
$\lambda_s$, we have uniformly definable families  
$\{\Upsilon'_{1s}\}_{s \in S'}$ and 
$\{\Upsilon_{1s}\}_{s \in S'}$  of subsets of $\Gamma_{\infty}^N$ and $\Gamma_{\infty}^M$
respectively.
The functions $BB_{h'_s,\psi_{is} \circ \phi_s}$
$BB_{h_s,\psi_{is}}$ lift via pull back to functions $\alpha_s$ and $\beta_s$ 
on $\Upsilon'_{1s}$ and $\Upsilon_{1s}$ (cf. Lemma \ref{backward branching extends to the skeleton}).
Furthermore, since $BB_{h'_s,\psi_{is} \circ \phi_s}$ and 
$BB_{h_s,\psi_{is}}$ are uniformly definable, we see that 
$\alpha_s$ and $\beta_s$ are uniformly definable for $s \in S'$. 
Recall that for every $s \in S$, 
$BB_{h'_s,\psi_{is} \circ \phi_s}$ 
 is completely determined by a finite collection 
of $\Gamma_\infty$-valued definable functions which can be assumed to be uniform in the parameter $s$. 
The pull back of these functions to $\Upsilon'_{1s}$ must then completely 
determine $\alpha_s$. 
The same holds for the collection $\beta_s$. 

We use the arguments in 
\cite[Theorem 6.4.4]{HL}, to get a finite family 
of $\Gamma_\infty$-valued definable functions 
$\xi_j$ on $S'$ such that for every $s \in S'$, 
$\{\xi_j(s)\}_j$ completely determine 
$\Upsilon'_{1s}$, $\Upsilon_{1s}$, the functions $\alpha_s$ and 
$\beta_s$ and the morphism $\Upsilon'_{1s} \to \Upsilon_{1s}$.  
More precisely, if for $s,s' \in S'$, 
$\xi_j(s) = \xi_j(s')$ for every $j$ then 
$\Upsilon'_{1s} = \Upsilon'_{1s'}$, 
$\Upsilon_{1s} = \Upsilon_{1s'}$ and with these identifications 
$\Upsilon'_{1s'} \to \Upsilon_{1s'}$ coincides with 
$\Upsilon'_{1s} \to \Upsilon_{1s}$, 
$\alpha_{s'} = \alpha_s$ and 
$\beta_{s'} = \beta_s$. 
    By \cite[Theorem 11.1.1]{HL}, there exists a deformation 
    retraction 
   $g' : I \times S' \to \widehat{S'}$ whose image is $\Gamma$-internal, 
   the functions $\xi_j$ are constant along the fibres of the retraction
   and $g'$ is $\mathrm{Aut}(S'/S)$-invariant. 
   It follows that $g'$ descends to a deformation retraction 
   $g : I \times S \to \widehat{S}$ whose image is $\Gamma$-internal. 
   One checks that condition (2) of the statement is satisfied for the
   retraction associated to $g$.

\end{proof}

           \bibliographystyle{plain}
\bibliography{library}

 \noindent  John Welliaveetil \\
 Department of Mathematics,
 Imperial College London, \\
 180 Queens Gate, \\
 SW7 2AZ, \\
 London, UK. \\
 \textit{email : welliaveetil@gmail.com}
   \end{document}